%% file: main.tex
\documentclass[12pt,a4paper]{amsart}

\usepackage[a4paper,hmargin=2.5cm,vmargin=2.5cm]{geometry}
\usepackage[utf8]{inputenc}
\usepackage[T1]{fontenc}
\usepackage{graphicx}
\usepackage{indentfirst}
\usepackage{amsfonts,amsmath,amssymb,amsthm}
\usepackage{tikz-cd}
\usepackage{xcolor}
\usepackage[main=english,russian]{babel}

\usepackage{microtype}

\usepackage[leqno]{amsmath}
\makeatletter
\newcommand{\reqnomode}{\tagsleft@false}

\makeatother

\usepackage{xcolor}
\usepackage{hyperref}

\hypersetup{
    colorlinks=true,
    linkcolor=black,
    citecolor=black,
    pdftitle={Yangian for gl_2 and Gelfand--Tsetlin basis for gl_n},
    pdfauthor={Mark Belov},
    pdfkeywords={Gelfand-Tsetlin, Yangians, Lie algebras}
    pdfpagemode=UseNone
}

\usepackage{comment}

\newtheorem{theorem}{Theorem}
\numberwithin{theorem}{section}

\newtheorem*{theorem*}{Theorem}

\theoremstyle{definition} 
\newtheorem{definition}[theorem]{Definition}

\theoremstyle{proposition} 
\newtheorem{proposition}[theorem]{Proposition}

\theoremstyle{proposition} 
\newtheorem{remark}[theorem]{Remark}

\theoremstyle{corollary} 
\newtheorem{corollary}[theorem]{Corollary}

\input{formulas}

\begin{document}
\reqnomode

\numberwithin{equation}{section}

\title{Yangian for $\gl{2}$ and Gelfand--Tsetlin basis for $\gl{n}$}
\begin{abstract}
    We construct an explicit realization of finite-dimensional irreducible representations $V_{\lambda}$ of $\gl{n}$ using the representation theory of $\yang{2}$. We consider $V_{\lambda}$ as the evaluation representation of $\yang{n}$ and restrict it to the subalgebras $\yang{2}^{(i)}$. Then $V_{\lambda}$ is completely reducible as a representation of $\yang{2}^{(i)}$. This allows us to use the explicit formulas for irreducible representations of $\yang{2}$.
\end{abstract}

\author{Mark Belov}
\address{Faculty of Mathematics, HSE University, 6 Usacheva St., Moscow 119048, Russia}
\email{maanbelov@edu.hse.ru}

\maketitle

\section{Introduction}
\label{Section1}
\input{1_Intro}

\section{The Gelfand-Tsetlin basis for \texorpdfstring{$\gl{n}$}{gl n}}
\label{Section2}
\input{2_Definition_of_GZ_Basis}

\section{Yangian and its representations}
\label{Section3}
\input{3_Yangians}

\section{The Gelfand-Tsetlin basis for \texorpdfstring{$\yang{2}$}{Y(gl 2)}}
\label{Section4}
\input{4_GZ_Basis_for_Y_gl2}

\section{The Gelfand-Tsetlin basis for \texorpdfstring{$\gl{n}$}{gl n}}
\label{Section5}
\input{5_GZ_Basis_for_gln}

\end{document}

%% file: formulas.tex
\newcommand{\gl}[1]{\ensuremath{\mathfrak{gl}_{#1}}}

\newcommand{\yang}[1]{\ensuremath{\mathrm{Y}(\mathfrak{gl}_{#1})}}

\newcommand{\uni}[1]{\ensuremath{\mathrm{U}(\mathfrak{gl}_{#1})}}

\newcommand{\Lir}[2]{\ensuremath{\mathit{L}(#1, #2)}}

\newcommand{\gz}[1]{\ensuremath{\mathrm{GZ}(#1)}}

\newcommand{\End}[1]{\mathrm{End}~\mathbb{C}^{#1}}

\newcommand{\suminf}[2]{\ensuremath{\sum_{#1=#2}^{+\infty}}}

\renewcommand{\t}[3]{\ensuremath{t_{#1#2}^{(#3)}}}

\newcommand{\qdet}[1]{\ensuremath{\mathrm{qdet}~T(#1)}}

\newcommand{\sgn}[1]{\ensuremath{\mathrm{sgn}(#1)\,}}

%% file: 1_Intro.tex
\subsection{Representations of \texorpdfstring{\gl{n}}{gl n} and Gelfand-Tsetlin basis}
We consider the Lie algebra of all complex $n\times n$-matrices $\gl{n}$ and its universal enveloping algebra $\uni{n}$. Let $E_{ij}$ for $i,j = 1, \ldots, n$ be standard generators of $\gl{n}$, that is, matrix unit with $1$ on $(i,j)$-entry and $0$ elsewhere.

Finite-dimensional irreducible representations $V_{\lambda}$ of $\gl{n}$ are parametrized by $n$-tuples
\begin{align*}
    \lambda=(\lambda_1,\ldots,\lambda_n) \in \mathbb{C}^n
\end{align*} such that $\lambda_i - \lambda_j \in \mathbb{Z}_{\geq 0}$. 
The standard proofs of this result (see e.g. \cite{Fulton}) do not provide formulas of the action of $\gl{n}$ on $V_{\lambda}$. Explicit formulas were first given by I. M. Gelfand and M. L. Tsetlin in the famous paper \cite{OriginalGelfand}. They use  a Gelfand-Tsetlin basis which is parametrized by Gelfand-Tsetlin tableaux, see \ref{tabl}. However, the paper contains only the formulas, with no methods for obtaining them. 

There are several approaches to deriving the Gelfand-Tsetlin formulas, see \cite{MolevReview} and the references given there. In particular, there is a method in the literature that uses the theory of Yangians: 
\begin{enumerate}
    \item In Nazarov and Tarasov's paper \cite{NazarovTarasov}, they consider lowering operators in the universal enveloping algebra $\uni{n}$ through $\yang{n}$;
    \item In the paper \cite{MolevFirst} by Molev, the ideas of the paper \cite{NazarovTarasov} were developed by constructing the Gelfand-Tsetlin bases for some irreducible representations of $\yang{n}$, see also~\cite{NazarovTarasov2}.
\end{enumerate}

In the present paper we use the following approach: we restrict the evaluation representation $V_{\lambda}$ to the subalgebras $\yang{2}^{(i)} \simeq \yang{2} \subset \yang{n}$, decompose the restriction as a direct sum of irreducible $\yang{2}$-modules and use a variant of Gelfand-Tsetlin formulas for representations of $\yang{2}^{(i)}$. Hence, in a sense, we present a different construction of the Gelfand-Tsetlin formulas via the theory of Yangians.

\subsection{Yangian for \texorpdfstring{$\gl{n}$}{gl n}}
The Yangian of the Lie algebra $\gl{n}$, denoted by $\yang{n}$, is a unital associative algebra generated by elements $\t{i}{j}{k}$ with $i,j\in\{1,\ldots,n\}$ and $k\in \mathbb{Z}_{\geq 0}$ subject to the relations
    \begin{align*}
        [\t{i}{j}{r+1}, \t{k}{l}{s}] - [\t{i}{j}{r}, \t{k}{l}{s+1}] = \t{k}{j}{r}\t{i}{l}{s} - \t{k}{j}{s}\t{i}{l}{r}
    \end{align*} for all $i,j\in\{1,\ldots,n\}$ and $r,s\in\mathbb{Z}_{\geq 0}$ with the assumption $\t{i}{j}{0}=\delta_{ij}$.

There is a surjective homomorphism
\begin{align*}
    \pi:\yang{n}\to\uni{n}: t_{ij}^{(k)}\mapsto \delta_{k1}E_{ij},
\end{align*} which is known as the evaluation homomorphism. It allows  to consider any representation of the Lie algebra $\gl{n}$ as a representation of $\yang{n}$.

$\yang{2}$ can be naturally embedded in $\yang{n}$ for all distinct $i,j\in\{1,\ldots,n\}$. However, in this paper we use different embeddings $\yang{2} \hookrightarrow \yang{n}$ for $i \in \{1,\ldots,n-1\}$ that arise from the Drinfeld new realization of $\yang{n}$, see Theorem \ref{new_realization}. We denote the image by $\yang{2}^{(i)}$.

\subsection{Main result} 
The main idea of the present paper is to restrict an irreducible representation $V_{\lambda}$ of $\gl{n}$ which we consider as a representation of $\yang{n}$ to the subalgebras $\yang{2}^{(i)}$.
It turns out that $V_{\lambda}\big|_{\yang{2}^{(i)}}$ is completely reducible and can be effectively decomposed into a direct sum of irreducible representations. This allows us to use an explicit realization of the corresponding irreducible representations of $\yang{2}$.

A Gelfand-Tsetlin tableau is an array of complex numbers 
    \begin{center}\begin{tabular}{c }
         $\lambda_{n1}$ ~  $\lambda_{n2}$ ~ \dots ~ $\lambda_{n,n-1}$ ~$\lambda_{nn}$ \\
         $\lambda_{n-1,1}$ ~ \dots ~ $\lambda_{n-1,n-1}$ \\
         $\ddots$ ~ $\dots$ ~\reflectbox{$\ddots$} \\
         $\lambda_{21}$ ~ $\lambda_{22}$ \\
         $\lambda_{11}$
    \end{tabular}
    \end{center}
such that $\lambda_{ij} - \lambda_{i-1,j} \in \mathbb{Z}_{\geq 0}$ and $\lambda_{i,j}- \lambda_{i,j+1} \in \mathbb{Z}_{\geq 0}$
for all possible indexes $i\geq j$. We now formulate the main theorem.

\begin{theorem*}
    Any finite-dimensional representation $V_{\lambda}$ of $\gl{n}$ of the highest weight $\lambda$ has the basis $\{v_{\Lambda}\}$, indexed by Gelfand-Tsetlin tableaux $\Lambda$ with the first row equals $\lambda$ such that 
    \begin{gather*}
        E_{ii}\cdot v_{\Lambda} = \left(\sum_{j=1}^{i}\lambda_{ij}-\sum_{j=1}^{i-1}\lambda_{i-1,j}\right)v_{\Lambda},\\
        E_{i+1,i}\cdot v_{\Lambda} = \sum_{j=1}^{i}\frac{1}{(l_{i1}-l_{ij})\cdot\ldots\land^{j}\ldots\cdot(l_{ii}-l_{ij})}v_{\Lambda-\delta_{ij}},\\
        E_{i,i+1}\cdot v_{\Lambda} = -\sum_{j=1}^{i} \frac{\prod_{k=1}^{i+1}(l_{i+1,k}-l_{ij}) \cdot \prod_{k=1}^{i-1}(l_{i-1,k}-l_{ij}-1)}{(l_{i1}-l_{ij})\cdot\ldots\land^{j}\ldots\cdot(l_{ii}-l_{ij})}v_{\Lambda+\delta_{ij}}.
    \end{gather*}
\end{theorem*}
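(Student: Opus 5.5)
We follow the strategy announced in the introduction: view $V_{\lambda}$ as a $\yang{n}$-module via the evaluation homomorphism $\pi$ and restrict it to the subalgebras $\yang{2}^{(i)}$ coming from the Drinfeld new realization (Theorem \ref{new_realization}). The notation $l_{ij}=\lambda_{ij}-j+1$ is used throughout (up to a possible shift convention fixed in Section \ref{Section2}).

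\textbf{Step 1: construct the basis.} We argue by induction on $n$ along the chain $\gl{1}\subset\gl{2}\subset\cdots\subset\gl{n}$. The branching rule gives $V_{\lambda}|_{\gl{n-1}}=\bigoplus_{\mu}V_{\mu}$, with $\mu$ running over the rows interlacing $\lambda$. Iterating this produces a decomposition of $V_{\lambda}$ into one-dimensional subspaces indexed by Gelfand--Tsetlin tableaux $\Lambda$. These lines are the joint eigenspaces of the Gelfand--Tsetlin subalgebra, equivalently of the quantum minors $\qdet{u}$ of the upper-left $k\times k$ submatrices. Under $\pi$ these quantum minors act on the line of $\Lambda$ by $\prod_{j=1}^k(1+(l_{kj}+c_k)u^{-1})$ for a suitable shift $c_k$; this follows from the known highest-weight eigenvalue of the quantum determinant. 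The formula for $E_{ii}$ then comes from comparing the $u^{-1}$ coefficients at levels $i$ and $i-1$. Only the normalization of the vectors $v_{\Lambda}$ remains to be fixed.

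\textbf{Step 2: reduce to $\yang{2}$.} Fix $i$. The Drinfeld generators of $\yang{2}^{(i)}$ are built from the Gauss-decomposition coordinates $h_i(u)$, $h_{i+1}(u)$, $e_i(u)$, $f_i(u)$. They commute with the Gelfand--Tsetlin elements of levels $\neq i$, since those lie in the Drinfeld Cartan part together with the $\gl{i-1}$ and centralizer pieces. Hence $V_{\lambda}|_{\yang{2}^{(i)}}$ splits into the subspaces $W$ in which all rows except row $i$ of $\Lambda$ are fixed. Each such $W$ is a finite-dimensional $\yang{2}$-module. It is irreducible and has explicit Drinfeld polynomials $P(u)=\prod_{k}(u+l_{i+1,k}+\cdots)\prod_k(u+l_{i-1,k}+\cdots)^{-1}$-type data. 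To see this, we compute $h_i(u)$ and $h_{i+1}(u)$ as ratios of consecutive quantum minors and match the resulting eigenvalues with the Drinfeld data. The $\yang{2}$-module $W$ is then identified with a tensor product of evaluation modules $\Lir{a}{b}$ that is irreducible. Its Gelfand--Tsetlin basis for $\yang{2}$, from Section \ref{Section4}, is indexed by the possible $i$-th rows. Section \ref{Section4} also supplies explicit formulas for the actions of $e(u)$ and $f(u)$: a sum of simple poles at $u=-l_{ij}$, with coefficients shifting $\lambda_{ij}$ by $\mp1$.

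\textbf{Step 3: recover $E_{i+1,i}$ and $E_{i,i+1}$.} Under $\pi$, the lowest coefficients satisfy $e_i^{(1)}\mapsto E_{i,i+1}$ and $f_i^{(1)}\mapsto E_{i+1,i}$. We expand the $\yang{2}$ formulas in $u^{-1}$ and read off the residues. The $f$-coefficients give $\prod_{k\ne j}(l_{ik}-l_{ij})^{-1}$. The $e$-coefficients give minus the product of the numerator polynomial $\prod_k(l_{i+1,k}-l_{ij})\prod_k(l_{i-1,k}-l_{ij}-1)$ over the same denominator. The overall normalization can be distributed so that all of the numerator goes onto $E_{i,i+1}$. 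This requires one global rescaling of the $v_{\Lambda}$, and it must be consistent across different $i$. That consistency holds because the $f$-coefficients involve only row $i$ differences, so we can fix the normalization by $v_{\Lambda}=\prod_i f$-monomials applied to the highest vector.

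\textbf{Main obstacle.} The hardest part is Step 2: proving that each $W$ is irreducible and pinning down its Drinfeld polynomials exactly, with the correct shifts $\pm1$ and $c_k$. The factor $(l_{i-1,k}-l_{ij}-1)$ in the final formula is extremely sensitive to these shifts. I would first check the case $n=2$, and then $n=3$, $i=2$, against the classical formulas to calibrate the conventions.
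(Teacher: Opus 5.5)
Your route is the paper's: restrict to $\yang{2}^{(i)}$, split into blocks with all rows except the $i$-th fixed, apply Theorem \ref{yangian_2_GF_realization}, and read off the $u^{-1}$-coefficients. The decisive problem is the last claim of your Step 3, that the normalizations of the blocks can be made consistent across different $i$. This claim is false, and so is the statement in the stated normalization.

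Theorem \ref{yangian_2_GF_realization} fixes the vectors of one $\yang{2}^{(i)}$-block only up to one scalar per block. Requiring that, for all $i$ at once, the $E_{i+1,i}$-coefficients depend on row $i$ alone is an overdetermined condition, and it cannot be met for $n\ge 3$. The displayed formulas themselves show this. For $n=3$, the coefficient of $v_{\Lambda+\delta_{11}-\delta_{22}}$ in $(E_{12}E_{32}-E_{32}E_{12})\,v_\Lambda$ that they produce is
\[
\frac{-(l_{21}-l_{11})(l_{22}-l_{11}-1)+(l_{21}-l_{11})(l_{22}-l_{11})}{l_{21}-l_{22}}=\frac{l_{21}-l_{11}}{l_{21}-l_{22}}.
\]
Take $\lambda=(2,1,0)$ and $\Lambda$ with rows $(2,1)$, $(1)$. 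The two terms are $\tfrac12\cdot 2=1$ and $1\cdot\tfrac12$, so the coefficient is $\tfrac12$, and all tableaux involved are admissible. But $[E_{12},E_{32}]=0$ in $\gl{3}$, so no basis of $V_{(2,1,0)}$ satisfies all three formulas.

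Defining $v_\Lambda$ by $f$-monomials applied to the highest vector does not rescue this. $f_i(u)$ does not commute with $f_{i\pm1}(v)$, so letting $f_i(u)$ act on such a monomial produces factors that depend on row $i-1$. A consistent choice must put the factor $\prod_{k=1}^{i-1}(l_{i-1,k}-l_{ij})$ into the $E_{i+1,i}$-coefficient, as in \cite[Th. 5.4.1]{Molev}.

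What the blockwise argument does establish is the following:
the $E_{ii}$-formula;
the fact that $E_{i+1,i}$ and $E_{i,i+1}$ only connect $\Lambda$ with $\Lambda\mp\delta_{ij}$;
the rescaling-invariant products, namely the coefficient of $v_{\Lambda+\delta_{ij}}$ in $E_{i,i+1}v_\Lambda$ times the coefficient of $v_\Lambda$ in $E_{i+1,i}v_{\Lambda+\delta_{ij}}$. These do agree with the displayed formulas.
Matching the block scalars for different $i$ needs a separate global argument. The paper's proof has the same hole. Its sentence calling the cross-$i$ issue ``mild'' asserts this consistency without proof, and the recursion for $C_\Lambda$ in the final remark is path-dependent for the same reason.

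Separately, Step 2 asserts rather than proves that each block is irreducible and isomorphic to a tensor product of evaluation modules with pairwise disjoint strings. Knowing the $h_i(u)$- and $h_{i+1}(u)$-eigenvalues on a block does not exclude reducibility. Your ``$\prod(\cdots)\prod(\cdots)^{-1}$-type'' data is a ratio, not a Drinfeld polynomial.

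The paper fills this step with almost-highest vectors. Theorem \ref{criterion_of_almost-highest_vector} shows that exactly one Gelfand--Tsetlin vector of the block has $h_i(u)/h_{i+1}(u)$-eigenvalue of the form $P(u+1)/P(u)$. Hence it is the highest vector of every irreducible submodule of the cyclic module it generates. Comparing dimensions with $\Lir{\alpha_1}{\beta_1}\otimes\cdots\otimes\Lir{\alpha_i}{\beta_i}$ then shows this submodule is the whole block. Here $\alpha_j=\min\{\lambda_{i+1,j},\lambda_{i-1,j-1}\}+i-j$ and $\beta_j=\max\{\lambda_{i+1,j+1},\lambda_{i-1,j}\}+i-j$, with the obvious conventions at $j=1$ and $j=i$. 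This min/max form is also what makes the strings pairwise disjoint, which Theorem \ref{yangian_2_GF_realization} requires. Alternatively, the centralizer construction recalled in Section \ref{Section5} gives irreducibility.
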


\subsection{The paper is organized as follows.} In \hyperref[Section2]{Section 2}, we introduce the Gelfand-Tsetlin basis for $\gl{n}$. In \hyperref[Section3]{Section 3}, we provide general knowledge of Yangian theory from \cite{Molev} without proofs to make the exposition self-contained. \hyperref[Section4]{Section 4} contains the Gelfand-Tsetlin realization of the special class of irreducible $\yang{2}$-modules. Finally, in \hyperref[Section5]{Section 5}, the main result is proved.

\subsection{Acknowledgments}
The author expresses his sincere gratitude to his research supervisor, Aleksei Ilin, for valuable advice, constructive feedback, and careful reading of the manuscript.

%% file: 2_Definition_of_GZ_Basis.tex
\subsection{The basis}
Consider a chain of embeddings
\begin{align}
\label{chain_of_gln}
    \gl{1} \hookrightarrow \gl{2} \hookrightarrow \ldots \hookrightarrow \gl{n-1} \hookrightarrow \gl{n} \hookrightarrow \ldots,
\end{align} where $\gl{n-1}$ is embedded in $\gl{n}$ in the following rule:
\begin{align*}
    A \mapsto \begin{pmatrix}
        ~ & ~ & ~ & 0 \\
        ~ & A & ~ & \vdots \\
        ~ & ~ & ~ & 0 \\
        0 & \ldots & 0 & 0
    \end{pmatrix} \in\gl{n}, ~A\in\gl{n-1}.
\end{align*} We will identify $\gl{n-1}$ with its image under the injection if necessary.

\begin{definition}
    A representation $V$ of $\gl{n}$ is said to be a representation of the highest weight $\lambda=(\lambda_1, \ldots, \lambda_n)\in\mathbb{C}^n$ if there exists a vector $\zeta$, called the highest vector, such that 
    \begin{align*}
        E_{ii}\zeta=\lambda_i\zeta, ~E_{ij}\zeta=0 ~\text{if}~i<j ~~\text{and}~~\gl{n}\cdot \zeta=V.
    \end{align*}
\end{definition}
The next theorem is classical (see e.g. \cite{Fulton}), thus, we omit the proof.
\begin{theorem}
    \label{gl_n_classification_theorem}
    Every finite-dimensional irreducible representation of $\gl{n}$ is a representation of the highest weight $\lambda=(\lambda_1, \ldots, \lambda_n)$ satisfying
    \begin{align*}
        \lambda_i-\lambda_j\in\mathbb{Z}_{\geq 0} ~\text{for}~ 1 \leq i < j \leq n.
    \end{align*} Conversely, for every highest weight as above there exists a unique finite-dimensional irreducible representation of this weight. We will denote such a representation by $V_{\lambda}$.
\end{theorem}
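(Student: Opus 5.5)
Theorem \ref{gl_n_classification_theorem} is classical, and the paper explicitly omits its proof. The standard argument splits into three parts: existence of a highest weight vector, the integrality conditions on $\lambda$, and existence and uniqueness of $V_\lambda$ for each admissible $\lambda$.

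\emph{Highest weight vector.} Let $V$ be a finite-dimensional irreducible representation. The identity matrix is central in $\gl{n}$, so by Schur's lemma it acts on $V$ by a scalar. The diagonal matrices form a commuting family, hence have a common eigenvector in $V$. Since $V$ is finite-dimensional, it has only finitely many weights. Applying to a common eigenvector the operators $E_{ij}$ with $i<j$, each of which raises the weight in the dominance order, we eventually reach a nonzero weight vector $\zeta$ killed by all $E_{ij}$ with $i<j$. By the PBW theorem, $\uni{n}\zeta$ is spanned by monomials in the lowering operators $E_{ij}$, $i>j$, applied to $\zeta$. This span is a nonzero subrepresentation, so it equals $V$ by irreducibility.

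\emph{Integrality.} For each $i<j$, restrict to the $\mathfrak{sl}_2$-triple $E_{ij}$, $E_{ji}$, $E_{ii}-E_{jj}$. The vector $\zeta$ is a highest weight vector of a finite-dimensional $\mathfrak{sl}_2$-module. The usual computation gives
\[
E_{ij}E_{ji}^m\zeta = m(\lambda_i-\lambda_j-m+1)E_{ji}^{m-1}\zeta .
\]
Since $E_{ji}^m\zeta$ must vanish for some $m$, take the first such $m$; then $\lambda_i-\lambda_j=m-1\in\mathbb{Z}_{\ge 0}$.

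\emph{Uniqueness.} Form the Verma module $M(\lambda)=\uni{n}\otimes_{\mathfrak b}\mathbb{C}_\lambda$, where $\mathfrak{b}$ is the upper triangular Borel subalgebra. It has a unique maximal proper submodule, namely the sum of all submodules not containing the weight $\lambda$. Any highest weight module of weight $\lambda$ is a quotient of $M(\lambda)$, so an irreducible one must be the unique irreducible quotient $L(\lambda)$.

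\emph{Existence.} This is the main obstacle: we must show $L(\lambda)$ is finite-dimensional when $\lambda$ is dominant integral. First tensor with a power of the determinant representation to shift $\lambda$, so that all $\lambda_k\ge 0$. Then construct a module inside a tensor product of exterior powers, or use Schur functors applied to $\mathbb{C}^n$. In $\bigotimes_k \Lambda^k\mathbb{C}^n$, with multiplicities $\lambda_k-\lambda_{k+1}$, the tensor product of the highest vectors $e_1\wedge\cdots\wedge e_k$ is a highest weight vector of weight $\lambda$. The submodule it generates is a finite-dimensional highest weight module, hence completely reducible; a highest weight module that is completely reducible is irreducible, so it is $V_\lambda$.

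Complete reducibility here is Weyl's theorem, applied to $\mathfrak{sl}_n$ with the center acting by scalars. Alternatively, one can avoid Weyl's theorem by passing to the simple quotient, which is still finite-dimensional. For the detailed verification we follow \cite{Fulton}, as the paper does.
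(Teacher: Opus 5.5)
Your proposal is correct. The paper omits the proof and defers to \cite{Fulton}, and your outline (highest vector via raising operators, $\mathfrak{sl}_2$-integrality, uniqueness through the Verma module, realization inside tensor products of exterior powers) is the standard argument being cited. The one phrase to tighten is the shift in the existence step: $\lambda$ is only assumed to have integral differences, so tensor with the one-dimensional module $E\mapsto c\,\mathrm{tr}(E)$, $c=-\lambda_n\in\mathbb{C}$ (equivalently, the automorphism $E_{ij}\mapsto E_{ij}+\delta_{ij}c$ used in the paper), rather than with an integer power of the determinant.
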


Recall that for every fixed $c\in\mathbb{C}$ a map
\begin{align*}
    \uni{n}\to\uni{n}:E_{ij}\mapsto E_{ij}+\delta_{ij}c
\end{align*} is an automorphism of $\uni{n}$. After applying the map, the highest weight of irreducible representation $V_{\lambda}$ is shifted by $c$. Therefore, without loss of generality we can assume that
\begin{align*}
    \lambda_i \in \mathbb{Z}, ~\lambda_{1} \geq \ldots \geq\lambda_n \geq 0,
\end{align*} in that case $\lambda$ can be seen as a Young tableau with $n$ rows.

It is a well-known fact that the restriction of $V_{\lambda}$ to $\gl{n-1}$ is a direct sum of irreducible representations without multiplicity:
\begin{align*}
    V_{\lambda}\big|_{\gl{n-1}} = \bigoplus_{\mu} V_{\mu},
\end{align*} where $V_{\mu}$ is an irreducible representation $\gl{n-1}$ of the highest weight $\mu$ satisfied a condition
\begin{align}
\label{young_tableau_relation}
    \lambda_1 \geq \mu_1 \geq \lambda_2 \geq \ldots \geq \lambda_{n-1} \geq \mu_{n-1} \geq \lambda_n \geq 0.
\end{align}
The standard proof is based on the Weyl Character Formula and some relations of Schur polynomials, see \cite[6 sec.]{Fulton} for more details. Alternative proof can be found in \cite[13 par.]{Zhelob}.

Continuing the process, we will represent $V_{\lambda}$ as a direct sum of $\gl{1}$-irreducible modules, that are one-dimensional subspaces. Since the decomposition is without multiplicity at each step, we conclude that the received lines are in one-to-one correspondence with a chain of embedded Young tableaux
\begin{align}
\label{chain_of_young_tableau}
    \lambda^{(1)} \subset \lambda^{(2)} \subset \ldots \subset \lambda^{(n-1)} \subset \lambda^{(n)} = \lambda,
\end{align} with $\lambda^{(i)}$ consisting of $i$ rows and satisfying with $\lambda^{(i+1)}$ the similar relations as \eqref{young_tableau_relation}.

For simplicity, we introduce the notion of a Gelfand-Tsetlin tableau.

\begin{definition}
\label{tabl}
    The Gelfand-Tsetlin tableau $\Lambda$ for the Young diagram $\lambda$ is defined to be a set of integer non-negative numbers
    \begin{align*}
        \lambda_{ij} \in \mathbb{Z}_{\geq 0}, ~\text{where}~ 1 \leq j \leq i \leq n,
    \end{align*} which are satisfied the relations
    \begin{align*}
        \lambda_{ij} \geq \lambda_{i-1,j} \geq \lambda_{i,j+1}
    \end{align*} for every allowed indices $i$ and $j$ and
    \begin{align*}
        (\lambda_{n1},\ldots,\lambda_{nn}) = \lambda.
    \end{align*} The Gelfand-Tsetlin tableau is usually written in a way
    \begin{center}\begin{tabular}{c }
         $\lambda_{n1}$ ~  $\lambda_{n2}$ ~ \dots ~ $\lambda_{n,n-1}$ ~$\lambda_{nn}$ \\
         $\lambda_{n-1,1}$ ~ \dots ~ $\lambda_{n-1,n-1}$ \\
         $\ddots$ ~ $\dots$ ~\reflectbox{$\ddots$} \\
         $\lambda_{21}$ ~ $\lambda_{22}$ \\
         $\lambda_{11}$
    \end{tabular}
    \end{center}
\end{definition}

The chain of Young diagrams \eqref{chain_of_young_tableau} uniquely corresponds with a Gelfand-Tsetlin tableau $\Lambda$ with $i$-th row equals to $\lambda^{(i)}$.

Hence, every irreducible representation $V_{\lambda}$ contains a set of one-dimensional subspaces indexed by Gelfand-Tsetlin tableaux $\Lambda$. With the notation of the vector by $v_{\Lambda}$, we have an arbitrary non-zero vector on the corresponding line unless otherwise specified. The set of vectors $\{v_{\Lambda}\}$ is a basis of $V_{\Lambda}$ called the Gelfand-Tsetlin basis.

\subsection{The Gelfand-Tsetlin subalgebra}The Gelfand-Tsetlin basis has another characterization. The injections \eqref{chain_of_gln} induce a chain of embeddings
\begin{align}
\label{chain_of_unin}
    \uni{1} \hookrightarrow \uni{2} \hookrightarrow \ldots \hookrightarrow \uni{n-1} \hookrightarrow \uni{n} \hookrightarrow \ldots.
\end{align} Henceforth, we will identify $\uni{n-1}$ with its image of the injection.

\begin{definition}
\label{gelfand_tsetlin_subalgebra}
    The Gelfand-Tsetlin subalgebra $\gz{n}$ is a commutative subalgebra of $\uni{n}$ generated by corresponding centers of $\uni{i}$:
    \begin{align*}
        \gz{n} = \left<Z\uni{1}, \ldots, Z\uni{n}\right>.
    \end{align*}
\end{definition}

\begin{theorem}
\label{gelfand_tsetlin_theorem}
    The Gelfand-Tsetlin subalgebra $\gz{n}$ acts by diagonal matrices in the Gelfand-Tsetlin basis in every finite-dimensional irreducible representation. Moreover, any basis vector $v_{\Lambda}$ is uniquely defined up to scalar by the $\gz{n}$ acting.
\end{theorem}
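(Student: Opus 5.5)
We argue by induction along the chain \eqref{chain_of_unin}, using the multiplicity-free branching $V_{\lambda}|_{\gl{n-1}}=\bigoplus_{\mu}V_{\mu}$ and Schur's lemma.

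\textbf{Diagonal action.} First we show that every element of $\gz{n}$ acts diagonally in the basis $\{v_\Lambda\}$. Fix a basis vector $v_\Lambda$ and an index $i\le n$. By construction $v_\Lambda$ lies in a unique chain of subspaces
\begin{align*}
V_{\lambda^{(1)}}\subset V_{\lambda^{(2)}}\subset\ldots\subset V_{\lambda^{(n)}}=V_\lambda ,
\end{align*}
where each $V_{\lambda^{(i)}}$ is an irreducible $\gl{i}$-submodule. An element $z\in Z\uni{i}$ commutes with $\gl{i}$, so it acts on the irreducible module $V_{\lambda^{(i)}}$ by a scalar, by Schur's lemma. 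That scalar is the value $\chi_{\lambda^{(i)}}(z)$ of the central character. Hence $z v_\Lambda=\chi_{\lambda^{(i)}}(z)v_\Lambda$. Since the $Z\uni{i}$ generate $\gz{n}$, the whole algebra $\gz{n}$ acts diagonally, and the eigencharacter of $v_\Lambda$ is determined by the tuple $(\chi_{\lambda^{(1)}},\ldots,\chi_{\lambda^{(n)}})$.

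\textbf{Separation of basis vectors.} Next we show that distinct tableaux give distinct eigencharacters. It suffices to show that the central character $\chi_\mu$ of $Z\uni{i}$ determines the highest weight $\mu$ of a finite-dimensional irreducible $\gl{i}$-module. We use the Harish-Chandra description: the scalar by which $z$ acts is a symmetric polynomial in the shifted weights $l_k=\mu_k-k+1$. Concretely, we take the Capelli or Gelfand central elements $C_m=\sum E_{k_1k_2}E_{k_2k_3}\cdots E_{k_mk_1}$. On the highest vector these act by explicit expressions whose top-degree parts are the power sums of the $l_k$. Therefore $\chi_\mu$ determines the multiset $\{l_1,\ldots,l_i\}$. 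Because $\mu_1\ge\ldots\ge\mu_i$, the $l_k$ are strictly decreasing, so this multiset determines $\mu$.

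\textbf{Computing the eigenvalues.} The main technical point is to compute the action of $C_m$, or of a suitable generating family such as the coefficients of the quantum-determinant-type polynomial, on the highest vector. The terms with some $k_j>k_{j+1}$ must be reorganized: one uses $E_{kl}\zeta=0$ for $k<l$ and commutes such factors to the right. This produces the $\rho$-shift. I would avoid the full computation and argue by leading terms and triangularity: modulo lower-degree symmetric polynomials, the eigenvalue of $C_m$ is $\sum_k l_k^m$. By induction on $m$, the eigenvalues of $C_1,\ldots,C_i$ therefore determine the power sums $p_1,\ldots,p_i$ of the $l_k$, hence the multiset. Alternatively, one can cite the Harish-Chandra isomorphism.

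\textbf{Conclusion.} Two basis vectors with the same $\gz{n}$-eigencharacter have the same $\lambda^{(i)}$ for every $i$, so they have the same tableau. The $v_\Lambda$ span pairwise distinct one-dimensional joint eigenspaces. Therefore each $v_\Lambda$ is determined up to a scalar by the action of $\gz{n}$.
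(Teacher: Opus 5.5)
Your proof is correct and follows essentially the same route as the paper. Schur's lemma, applied to $Z\uni{i}$ acting on the irreducible $\gl{i}$-module containing $v_\Lambda$, gives diagonality, and the Harish-Chandra description of central characters shows that $Z\uni{i}$ determines the $i$-th row of the tableau, hence the whole tableau.

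Your leading-term argument with the Gelfand invariants $C_m$ is a more explicit version of the Harish-Chandra step. It still relies on Harish-Chandra for the symmetry of the lower-order terms. The paper additionally points to a direct check via the eigenvalues of $h_i(u)$ in Proposition~\ref{h_i_defines_GZ_basis}.
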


\begin{proof}
    By Schur's Lemma, every element of the center $Z\uni{i}$ acts by a scalar in any finite-dimensional irreducible $\gl{i}$-module. By the definition of the Gelfand-Tsetlin basis, every vector $v_{\Lambda}$ is in some irreducible $\gl{i}$-module for each $i\in\{1,\ldots,n\}$ and thus is an eigenvector for $Z\uni{i}$. Since $\gz{n}$ is generated by $Z\uni{i}$, it proves the first statement.

    Indeed, the second statement will be proved straightforwardly later, see Proposition \ref{h_i_defines_GZ_basis}. However, it may be proved by a more general statement well-known as the Harish-Chandra isomorphism. It states that $Z\uni{n}$, as a commutative algebra, is isomorphic to the space of symmetric polynomials in the variables $\lambda_i+n-i$. In consequence, the acting of $Z\uni{n}$ uniquely defines the finite-dimensional irreducible representation, see \cite[\MakeUppercase{\romannumeral 9} ch.]{ZhelobCompact} for details.

    Since the $i$-th row in a Gelfand-Tsetlin tableau is a highest weight of $\gl{i}$-module in which corresponding $v_{\Lambda}$ is, the action of $Z\uni{i}$ defined the $i$-th row. Thus, any basis vector $v_{\lambda}$ is uniquely determined by the $\gz{n}$ acting.
\end{proof}

\begin{corollary}
    The image of $\gz{n}$ in any $\mathrm{End}(V_{\lambda})$ is all diagonal matrices.
\end{corollary}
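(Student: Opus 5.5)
The corollary follows from Theorem \ref{gelfand_tsetlin_theorem} together with a standard linear-algebra fact: a commutative algebra of diagonal operators that separates the basis lines contains every diagonal matrix. I would argue as follows.

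First, write $A\subset \mathrm{End}(V_{\lambda})$ for the image of $\gz{n}$. It is a unital subalgebra, since $1\in Z\uni{1}$ maps to the identity. By the first part of Theorem \ref{gelfand_tsetlin_theorem}, every element of $A$ is diagonal in the Gelfand-Tsetlin basis $\{v_{\Lambda}\}$. So $A$ sits inside the algebra $D$ of diagonal matrices, and $D\simeq\mathbb{C}^N$ with $N$ the number of tableaux. It remains to prove the reverse inclusion $D\subset A$.

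Second, I would use the second part of Theorem \ref{gelfand_tsetlin_theorem}. Each $v_{\Lambda}$ is determined up to scalar by the $\gz{n}$-action. Hence the characters $\chi_{\Lambda}:A\to\mathbb{C}$, sending $a$ to its eigenvalue on $v_{\Lambda}$, are pairwise distinct. Concretely, for $\Lambda\neq\Lambda'$ there is some $a_{\Lambda\Lambda'}\in A$ with $\chi_{\Lambda}(a_{\Lambda\Lambda'})\neq\chi_{\Lambda'}(a_{\Lambda\Lambda'})$. If instead all characters agreed on two tableaux, the $\gz{n}$-action would not distinguish $v_{\Lambda}$ from $v_{\Lambda'}$, or from any combination of the two. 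That contradicts uniqueness up to scalar.

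Third, I would build the projector onto each line by Lagrange interpolation. Set
\begin{align*}
P_{\Lambda}=\prod_{\Lambda'\neq\Lambda}\frac{a_{\Lambda\Lambda'}-\chi_{\Lambda'}(a_{\Lambda\Lambda'})}{\chi_{\Lambda}(a_{\Lambda\Lambda'})-\chi_{\Lambda'}(a_{\Lambda\Lambda'})}\in A.
\end{align*}
On $v_{\Lambda'}$ with $\Lambda'\neq\Lambda$, the factor indexed by $\Lambda'$ vanishes. On $v_{\Lambda}$, every factor equals $1$. So $P_{\Lambda}$ is the diagonal matrix unit for the line $\mathbb{C}v_{\Lambda}$. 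These matrix units span $D$, which gives $A=D$.

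The only point needing care is the separation step: reading ``uniquely defined up to scalar by the $\gz{n}$ acting'' as distinctness of the joint eigenvalue characters. I would phrase it through the proof of Theorem \ref{gelfand_tsetlin_theorem}. There, the center $Z\uni{i}$ acts on $v_{\Lambda}$ by scalars that determine the $i$-th row of $\Lambda$, because distinct dominant weights give distinct central characters. Two tableaux differ in some row $i$, so some element of $Z\uni{i}$ separates them.
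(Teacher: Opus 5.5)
Your proof is correct and follows essentially the same route as the paper. For each $\hat{\Lambda}\neq\Lambda$, pick an element of $\gz{n}$ that separates $v_{\Lambda}$ from $v_{\hat{\Lambda}}$, and take the product of these elements shifted by their eigenvalues on the $v_{\hat{\Lambda}}$; this gives a nonzero multiple of the projector onto $\mathbb{C}v_{\Lambda}$. Your Lagrange normalization and the row-by-row separation via central characters just spell out steps the paper leaves implicit (the latter is the Harish-Chandra argument from the proof of Theorem \ref{gelfand_tsetlin_theorem}).
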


\begin{proof}
    Fix an arbitrary basis vector $v_{\Lambda}$. For any distinct basis vector $v_{\hat{\Lambda}}$ one can find an element $g_{\hat{\Lambda}}\in\gz{n}$ with the property that its eigenvalues for $v_{\Lambda}$ and $v_{\hat{\Lambda}}$ differ.
    
    Let $C_{\hat{\Lambda}}\in\mathbb{C}$ be the eigenvalue of $g_{\hat{\Lambda}}\in\gz{n}$, consider 
    \begin{align*}
        g=\prod_{\hat{\Lambda} \not=\Lambda}\, (g_{\hat{\Lambda}}-C_{\hat{\Lambda}}\cdot 1) \in \gz{n}.
    \end{align*} It is well defined since $V_{\lambda}$ is finite-dimensional. Also, $g$ satisfies
    \begin{align*}
        g = C\cdot \mathrm{pr}_{v_{\Lambda}}
    \end{align*} for some non-zero $C\in\mathbb{C}\setminus\{0\}$. It proves the assertion.
\end{proof}

In fact, the more general statement is true. We refer the reader to \cite{Ovsienko} and \cite{Tarasov} for the proof and discussion the next remark.

\begin{remark}
    The Gelfand-Tsetlin subalgebra $\gz{n}$ is a maximal commutative subalgebra of $\uni{n}$.
\end{remark}

%% file: 3_Yangians.tex
\subsection{Preliminaries on Yangian}
We state some definitions and results from \cite{Molev}.

\begin{definition}
    The Yangian of the Lie algebra $\gl{n}$, denoted by $\yang{n}$, is a unital associative algebra generated by elements $\t{i}{j}{k}$ with $i,j\in\{1,\ldots,n\}$ and $k\in\mathbb{Z}_{\geq 0}$ subject to the relations
   \begin{align}
    \label{t_ijk_relations}
        [\t{i}{j}{r+1}, \t{k}{l}{s}] - [\t{i}{j}{r}, \t{k}{l}{s+1}] = \t{k}{j}{r}\t{i}{l}{s} - \t{k}{j}{s}\t{i}{l}{r}
    \end{align} for all $i,j\in\{1,\ldots,n\}$ and $r,s\in\mathbb{Z}_{\geq 0}$ with the assumption $\t{i}{j}{0}=\delta_{ij}$.
\end{definition}

Consider the next formal series of $\yang{n}[[u^{-1}]]$
\begin{align*}
    t_{ij}(u) = \delta_{ij} + \t{i}{j}{1}u^{-1} + \t{i}{j}{2}u^{-2} + \ldots = \suminf{k}{0}\t{i}{j}{k}u^{-k}.
\end{align*} 
Relations  \eqref{t_ijk_relations} are  equivalent to the relations
\begin{align}
\label{t_iju_relations}
    (u-v)[t_{ij}(u), t_{kl}(v)] = t_{kj}(u)t_{il}(v) - t_{kj}(v)t_{il}(u).
\end{align}

\begin{proposition}
\label{evaluation_homomorphism}
    A map $\pi:\yang{n}\to\uni{n}$ defined by 
    \begin{align*}
        t_{ij}(u) \to \delta_{ij} + E_{ij}u^{-1}
    \end{align*} is a surjective algebra homomorphism.
\end{proposition}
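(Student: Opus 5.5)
We need to prove that $\pi$ is a well-defined algebra homomorphism and that it is surjective. Since $\yang{n}$ is presented by generators $\t{i}{j}{k}$ and the relations \eqref{t_ijk_relations}, or equivalently \eqref{t_iju_relations}, the plan is to define $\pi$ on generators and check those relations. Concretely, set $\pi(\t{i}{j}{1}) = E_{ij}$ and $\pi(\t{i}{j}{k}) = 0$ for $k\ge 2$; the convention $\t{i}{j}{0}=\delta_{ij}$ is automatically respected. The map then extends to a homomorphism from the free algebra, and it descends to $\yang{n}$ exactly when the images satisfy \eqref{t_iju_relations} in $\uni{n}[[u^{-1},v^{-1}]]$.

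The main step, and the only real computation, is to check \eqref{t_iju_relations} with $t_{ij}(u)$ replaced by $\delta_{ij}+E_{ij}u^{-1}$.

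For the left-hand side, scalars commute with everything, so
\begin{align*}
(u-v)\,[\delta_{ij}+E_{ij}u^{-1},\ \delta_{kl}+E_{kl}v^{-1}] = (u-v)u^{-1}v^{-1}[E_{ij},E_{kl}] = (v^{-1}-u^{-1})\,(\delta_{kj}E_{il}-\delta_{il}E_{kj}).
\end{align*}

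For the right-hand side, expand the two products $(\delta_{kj}+E_{kj}u^{-1})(\delta_{il}+E_{il}v^{-1})$ and $(\delta_{kj}+E_{kj}v^{-1})(\delta_{il}+E_{il}u^{-1})$ and subtract. The constant terms cancel. The quadratic terms are $E_{kj}E_{il}u^{-1}v^{-1}$ in both products, so they cancel as well. What remains is
\begin{align*}
\delta_{kj}E_{il}(v^{-1}-u^{-1}) + \delta_{il}E_{kj}(u^{-1}-v^{-1}),
\end{align*}
which agrees with the left-hand side. This is a routine bookkeeping check; the only point needing care is the sign convention $[E_{ij},E_{kl}]=\delta_{jk}E_{il}-\delta_{li}E_{kj}$.

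One could instead verify \eqref{t_ijk_relations} directly, splitting into the cases $r,s\in\{0,1\}$ (all other cases have both sides equal to zero). Working with the series form avoids this case analysis, and the paper states that the two forms are equivalent.

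Finally, surjectivity is immediate: $\uni{n}$ is generated by the $E_{ij}$, and each $E_{ij}=\pi(\t{i}{j}{1})$ lies in the image. I do not expect any obstacle beyond the sign check above.
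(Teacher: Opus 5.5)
Your proof is correct and follows the same strategy as the paper: check that the images $\delta_{ij}+E_{ij}u^{-1}$ satisfy the defining relations, then deduce surjectivity from $E_{ij}=\pi(t_{ij}^{(1)})$. The difference is only one of presentation. The paper writes out just the coefficient relation with $r=0$, $s=1$, which is the only nontrivial case under $\pi$ apart from its mirror $r=1$, $s=0$. Your computation with the series relation \eqref{t_iju_relations} handles all $(r,s)$ at once, so it makes the well-definedness check explicitly complete.
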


\begin{proof}
    Writing relations \eqref{t_ijk_relations} with $r=0$ and $s=1$, we get
    \begin{align*}
        [\t{i}{j}{1},\t{k}{l}{1}] = \delta_{kj}\t{i}{j}{1} - \delta_{il}\t{k}{j}{1}.
    \end{align*} Since the next relations hold in $\uni{n}$
    \begin{align*}
        [E_{ij}, E_{kl}] = \delta_{kj}E_{il} - \delta_{il}E_{kj},
    \end{align*} the map $\pi$ is an algebra homomorphism. Clearly, it is a surjection.
\end{proof}
The map $\pi$ is called the evaluation homomorphism. 

Recall that we fixed the chain of embeddings \eqref{chain_of_unin} of $\uni{n}$. In the same manner we consider a chain of embeddings of $\yang{n}$
\begin{align}
\label{yangian_embeddings}
    \yang{1} \hookrightarrow \yang{2}\hookrightarrow\ldots\hookrightarrow\yang{n-1}\hookrightarrow\yang{n}\hookrightarrow\ldots,
\end{align} where the injection $\yang{n-1} \hookrightarrow\yang{n}$ is naturally given by
\begin{align*}
    \t{i}{j}{k} \to \t{i}{j}{k},~ \forall 1 \leq i,\,j \leq n-1.
\end{align*} PBW theorem for $\yang{n}$ (see \cite[Th. 1.4.1]{Molev}) guarantees that the maps are truly injective. These embeddings are agreed with the evaluation homomorphism, that is, the following diagram is commutative
\begin{center}
    \begin{tikzcd}
        \yang{1} \arrow[hook]{r} \arrow[two heads]{d}{\pi_{1}} & \yang{2} \arrow[r, hook]\arrow[two heads]{d}{\pi_{2}} & \ldots \arrow[r, hook]& \yang{n-1}\arrow[r, hook] \arrow[two heads]{d}{\pi_{n-1}} & \yang{n} \arrow[two heads]{d}{\pi_{n}} \arrow[hook]{r} &\ldots\\
        \uni{1} \arrow[r, hook] & \uni{2} \arrow[r, hook] & \ldots\arrow[r, hook] & \arrow[r] \uni{n-1} \arrow[r, hook] & \uni{n} \arrow[hook]{r} &\ldots
    \end{tikzcd}
\end{center}

Set $T(u)$ as a matrix with elements of $\yang{n}[[u^{-1}]]$ where $ij$-th entry equals $t_{ij}(u)$:
\begin{align}
\label{TU_matrix}
    T(u) = [t_{ij}(u)]_{ij} = \begin{pmatrix}
        t_{11}(u) & t_{12}(u) & \ldots & t_{1n}(u) \\
        t_{21}(u) & t_{22}(u) & \ldots & t_{2n}(u) \\
        \vdots & \vdots & \ddots & \vdots \\
        t_{n1}(u) & t_{n2}(u) & \ddots & t_{nn}(u)
    \end{pmatrix}.
\end{align} The matrix $T(u)$ can be regarded as an element from $\End{n}\otimes\yang{n}[[u^{-1}]]$.

After fixing $m\in\mathbb{N}$, we denote by $T_a(u)$ for $a\in\mathbb{N}$ with $a\leq m$ the operator from $(\End{n})^{\otimes m}\otimes\yang{n}[[u^{-1}]]$ which acts as the identity on all components except $a$-th and as the matrix $T(u)$ on the $a$-th one.

Let the Yang $R$-matrix be
\begin{align*}
    R(u) = 1-\mathcal{P}u^{-1},
\end{align*}where $\mathcal{P}$ is the natural operator of transposition on $\mathbb{C}^{n} \otimes \mathbb{C}^{n}$.

\begin{theorem}[\textrm{RTT}-relation] \cite[Proposition 1.2.2]{Molev}
    Relations \eqref{t_ijk_relations}, which define the Yangian, are equivalent to the next relation in $(\End{n})^{\otimes (2)}\otimes\yang{n}[[u^{- 1},v^{-1}, (u-v)^{-1}]]$
    \begin{align*}
        R(u-v)T_1(u)T_2(v) = T_2(v)T_1(u)R(u-v).
    \end{align*}
\end{theorem}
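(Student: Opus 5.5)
The plan is to pass through the generator-level relations. By definition, $T(u)=\sum_{i,j} e_{ij}\otimes t_{ij}(u)$, where $e_{ij}$ are the matrix units of $\End{n}$. I will write out both sides of
\begin{align*}
R(u-v)T_1(u)T_2(v) = T_2(v)T_1(u)R(u-v)
\end{align*}
in the basis $e_{ij}\otimes e_{kl}$ of $(\End{n})^{\otimes 2}$ and compare coefficients. This should reproduce exactly relation \eqref{t_iju_relations}, which the paper has already stated to be equivalent to \eqref{t_ijk_relations}. For completeness I will also check that last equivalence by expanding in powers of $u^{-1},v^{-1}$.

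\textbf{Matrix computation.} We have $T_1(u)T_2(v)=\sum e_{ij}\otimes e_{kl}\otimes t_{ij}(u)t_{kl}(v)$ and $T_2(v)T_1(u)=\sum e_{ij}\otimes e_{kl}\otimes t_{kl}(v)t_{ij}(u)$. The transposition is $\mathcal{P}=\sum_{a,b} e_{ab}\otimes e_{ba}$, so $\mathcal{P}(e_{ij}\otimes e_{kl})=e_{kj}\otimes e_{il}$ and $(e_{ij}\otimes e_{kl})\mathcal{P}=e_{il}\otimes e_{kj}$. Multiply the relation by $(u-v)$, which is legitimate in the indicated ring, and read off the coefficient of $e_{ij}\otimes e_{kl}$. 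After reindexing the $\mathcal{P}$-terms, this gives
\begin{align*}
(u-v)t_{ij}(u)t_{kl}(v) - t_{kj}(u)t_{il}(v) = (u-v)t_{kl}(v)t_{ij}(u) - t_{kj}(v)t_{il}(u).
\end{align*}
Rearranged, this is precisely \eqref{t_iju_relations}. The correspondence is coefficientwise and reversible, so the RTT relation holds if and only if \eqref{t_iju_relations} holds for all $i,j,k,l$.

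\textbf{From series to generators.} Substitute $t_{ij}(u)=\sum_r \t{i}{j}{r}u^{-r}$ and compare coefficients of $u^{-r}v^{-s}$ on both sides of \eqref{t_iju_relations}. On the left, $u\cdot u^{-r-1}$ produces $[\t{i}{j}{r+1},\t{k}{l}{s}]$, while $v\cdot v^{-s-1}$ produces $-[\t{i}{j}{r},\t{k}{l}{s+1}]$. The right side gives $\t{k}{j}{r}\t{i}{l}{s}-\t{k}{j}{s}\t{i}{l}{r}$, which is \eqref{t_ijk_relations}. The boundary coefficients are handled by the convention $\t{i}{j}{0}=\delta_{ij}$: they either vanish identically or coincide with instances of \eqref{t_ijk_relations} with $r$ or $s$ equal to $0$. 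Conversely, \eqref{t_ijk_relations} for all $r,s\ge 0$ recovers every coefficient of \eqref{t_iju_relations}.

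\textbf{Main obstacle.} The difficulty is bookkeeping rather than ideas. First, the $\mathcal{P}$-multiplication must be on the correct side, with indices swapped correctly, so that the terms come out as $t_{kj}(u)t_{il}(v)$ and $t_{kj}(v)t_{il}(u)$ and not as their transposes. Second, one must justify that multiplying by $(u-v)$ loses no information. This holds because the relations live in a ring where $u-v$ is invertible, and equivalently because $(u-v)$ is not a zero divisor in $\yang{n}[[u^{-1},v^{-1}]]$. To settle both points I would test the identity on $n=1$ or on the evaluation image, where the relation reduces to $[E_{ij},E_{kl}]=\delta_{kj}E_{il}-\delta_{il}E_{kj}$.
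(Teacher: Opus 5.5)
Your proposal is correct. The paper gives no proof of its own; it cites Molev's Proposition 1.2.2, and your argument is exactly that standard proof: multiply by $u-v$, compare coefficients of $e_{ij}\otimes e_{kl}$ using $\mathcal{P}(e_{ij}\otimes e_{kl})=e_{kj}\otimes e_{il}$ and $(e_{ij}\otimes e_{kl})\mathcal{P}=e_{il}\otimes e_{kj}$, then expand in $u^{-1},v^{-1}$. Your index bookkeeping checks out, so the proposed ``test'' on $n=1$ or on the evaluation image is only a sanity check and proves nothing. It is also unnecessary: if $(u-v)F=0$ with $F=\sum_{r,s\ge 0}F_{rs}u^{-r}v^{-s}$, the coefficients of $u^{1}v^{-s}$ and $u^{-r}v^{1}$ give $F_{0s}=F_{r0}=0$, and $F_{r+1,s}=F_{r,s+1}$ then forces $F=0$, so nothing is lost by multiplying by $u-v$.
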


For any matrix $B(u)$ with elements from $\yang{n}[[u^{-1}]]$ by the notation
\begin{align*}
    T(u) \to B(u)
\end{align*}we mean a function $\yang{n}\to\yang{n}$ mapping $t_{ij}(u)$ to $ij$-th entry of the matrix $B(u)$. 

\begin{proposition}
    Let $f(u)\in\mathbb{C}[[u^{-1}]]$ be an arbitrary invertible formal series and $c\in\mathbb{C}$ be an arbitrary complex number, then maps
    \begin{align}
        T(u) \to f(u)T(u),\label{multiplication_automorphism}\\
        T(u) \to T(u-c)\label{shift_automorphism}
    \end{align} are automorphisms of $\yang{n}$.
\end{proposition}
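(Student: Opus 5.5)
The plan is to check directly that each map preserves the defining relations and has an inverse of the same type. Since $\yang{n}$ is presented by generators and relations, it is enough to show that the images of the generators $\t{i}{j}{k}$ satisfy \eqref{t_iju_relations}, or equivalently the RTT-relation. That gives a well-defined algebra endomorphism, and bijectivity then comes from exhibiting the inverse.

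For \eqref{multiplication_automorphism}, write $f(u)=\sum_{k\ge0} f_k u^{-k}$ with $f_0\neq 0$. The coefficients of $f(u)$ are central scalars, so $f(u)f(v)$ commutes with everything. Hence
\begin{align*}
R(u-v)\,f(u)T_1(u)\,f(v)T_2(v) = f(u)f(v)\,R(u-v)T_1(u)T_2(v),
\end{align*}
and similarly on the right-hand side, so the RTT-relation for $f(u)T(u)$ follows from the one for $T(u)$.

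The only subtlety is the normalization $\t{i}{j}{0}=\delta_{ij}$. The image of $t_{ii}(u)$ has constant term $f_0$, so we need $f_0=1$. Here I would either read ``invertible'' as including $f_0=1$, or first rescale by the constant $f_0^{-1}$. For the inverse, take $f(u)^{-1}$, which is again a series in $u^{-1}$ with constant term $1$; composing the two maps gives the identity on the generating series $t_{ij}(u)$.

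For \eqref{shift_automorphism}, expand
\begin{align*}
t_{ij}(u-c)=\delta_{ij}+\suminf{k}{1}\t{i}{j}{k}u^{-k}(1-cu^{-1})^{-k}.
\end{align*}
This is a well-defined series in $u^{-1}$: the coefficient of $u^{-m}$ is a finite combination of $\t{i}{j}{k}$ with $k\le m$, and the constant term is still $\delta_{ij}$. The relation \eqref{t_iju_relations} for the shifted series follows by substituting $u\mapsto u-c$ and $v\mapsto v-c$, because the factor $u-v$ is invariant under a simultaneous shift. To make this rigorous, I would note that the substitution $u^{-1}\mapsto (u-c)^{-1}$ defines a continuous ring homomorphism of the relevant completed series rings in two variables. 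The identity \eqref{t_iju_relations}, which lives in $\yang{n}[[u^{-1},v^{-1}]]$, is then carried to the corresponding identity for the shifted series. The inverse map is $T(u)\to T(u+c)$.

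The main obstacle is this last formal step: justifying that substitution into two-variable formal series respects products and the polynomial factor $(u-v)$. Working coefficientwise shows that only finitely many terms contribute to each coefficient, which settles it.
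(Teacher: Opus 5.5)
Your argument is correct and is the standard one. The paper gives no proof of this proposition (it is quoted from \cite{Molev}), and the proof there is exactly this direct check of the defining RTT relations: it uses that $f(u)f(v)$ is a central scalar and that $R(u-v)$ depends only on $u-v$, with inverses $T(u)\to f(u)^{-1}T(u)$ and $T(u)\to T(u+c)$.

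On the normalization, keep only your first reading. For $n\geq 2$ and $f_0\neq 1$ the map is not a homomorphism at all, since the relation with $r=0$, $s=1$ would force $f_0^2=f_0$. So the hypothesis must be $f(u)\in 1+u^{-1}\mathbb{C}[[u^{-1}]]$, which the paper tacitly assumes whenever it uses this automorphism; rescaling by $f_0^{-1}$ would prove the statement for a different map, not for the one given.
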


The structure of a Hopf algebra on the Yangian is well defined. We need only a formula of comultiplication:

\begin{align}
\label{comultiplication}
    \Delta(t_{ij}(u)) = \sum_{k=1}^{n} t_{ik}(u) \otimes t_{kj}(u),
\end{align} see \cite[1 ch., 5 par.]{Molev} for more details.

\subsection{Center of \texorpdfstring{$\yang{n}$}{Y(gl n)}} We refer the reader to \cite[1 ch., 6-7 par.]{Molev} for details and proofs.

\begin{definition}
\label{quantum_determinant}
    The quantum determinant of the matrix $T(u)$ is a formal series defined by one of the equivalent formulas
    \begin{multline*}
        \qdet{u} = \sgn{q}\sum_{\rho\in S_n}\sgn{\rho} t_{\rho(1)q(1)}(u) \cdot \ldots \cdot t_{\rho(n)q(n)}(u-n+1) =\\= \sgn{q}\sum_{\rho\in S_n}\sgn{\rho} t_{q(1)\rho(1)}(u-n+1) \cdot\ldots\cdot t_{q(n)\rho(n)}(u)
    \end{multline*} with $q\in S_n$ being some fixed in advance permutation.
\end{definition}

\begin{definition}
    A quantum minor of the matrix $T(u)$ corresponding to a set of rows $\{a_1,\ldots,a_m\}$ and a set of columns $\{b_1,\ldots,b_m\}$ with $m\leq n$ and $a_i,b_j\in\{1,\ldots,n\}$ is a formal series given by one of the equivalent formulas
    \begin{multline*}
        t^{a_1\ldots a_m}_{b_1\ldots b_m}(u) = \sum_{\rho\in S_m}\sgn{\rho} t_{a_{\rho(1)}b_1}(u) \cdot \ldots \cdot t_{a_{\rho(m)}b_m}(u-m+1) =\\= \sum_{\rho\in S_m}\sgn{\rho} t_{a_1b_{\rho(1)}}(u-m+1) \cdot\ldots\cdot t_{a_mb_{\rho(m)}}(u).
    \end{multline*}
\end{definition}

\begin{theorem}[{\cite[Th. 1.7.5]{Molev}}]
\label{theorem_of_center_yangian}
    The coefficients of the quantum determinant $\qdet{u}$ are algebraically independent and generate the center of the Yangian $\yang{n}$.
\end{theorem}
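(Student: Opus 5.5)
The statement to prove is Theorem \ref{theorem_of_center_yangian}: the coefficients of $\qdet{u}$ are central, algebraically independent, and generate the center.

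\emph{Centrality.} I would use the fusion procedure. Let $A_n$ be the antisymmetrizer in $(\End{n})^{\otimes n}$. Iterating the RTT-relation gives
\begin{align*}
R(u_1,\ldots,u_n)\,T_1(u_1)\cdots T_n(u_n)=T_n(u_n)\cdots T_1(u_1)\,R(u_1,\ldots,u_n),
\end{align*}
where $R(u_1,\ldots,u_n)=\prod_{a<b}R_{ab}(u_a-u_b)$ is the ordered product. Specialising $u_a=u-a+1$ makes this product equal to $n!\,A_n$. Hence
\begin{align*}
A_nT_1(u)\cdots T_n(u-n+1)=T_n(u-n+1)\cdots T_1(u)A_n=\qdet{u}\,A_n.
\end{align*}
Reading off matrix entries yields both formulas in Definition \ref{quantum_determinant}, including their independence of $q$. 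Next, introduce an extra auxiliary space with index $0$ and apply the RTT-relation repeatedly to move $T_0(v)$ through the product $T_1(u)\cdots T_n(u-n+1)$. This gives
\begin{align*}
\qdet{u}\,T_0(v)\,A_n\prod_a R_{0a}=A_n\prod_a R_{0a}\,T_0(v)\,\qdet{u}.
\end{align*}
The product $\prod_a R_{0a}(v-u+a-1)$, restricted to the image of $A_n$, is a nonzero scalar series, so it can be cancelled. Therefore $[\qdet{u},t_{ij}(v)]=0$.

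\emph{Algebraic independence and generation.} I would pass to the associated graded algebra. Put $\deg \t{i}{j}{r}=r-1$. By the PBW theorem \cite[Th. 1.4.1]{Molev}, $\mathrm{gr}\,\yang{n}\cong \mathrm{U}(\gl{n}[x])$, with the image of $\t{i}{j}{r}$ equal to $E_{ij}x^{r-1}$. Write $\qdet{u}=1+d_1u^{-1}+d_2u^{-2}+\ldots$. Expanding the determinant shows that the top symbol of $d_r$ is $\sum_i E_{ii}x^{r-1}$. So the symbols $\bar d_1,\bar d_2,\ldots$ are algebraically independent, and hence so are the $d_r$.

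For generation, take $z$ central with symbol $\bar z$. Then $\bar z$ lies in the centre of $\mathrm{U}(\gl{n}[x])$. I would prove that this centre is the polynomial algebra $\mathbb{C}[E x^{r}]_{r\geq0}$, where $E=\sum_i E_{ii}$. The argument goes through the symmetric algebra: one shows the $\gl{n}[x]$-invariants of $S(\gl{n}[x])$ in the Poisson centre are generated by the $E x^r$. Then $\bar z$ is a polynomial in the $\bar d_r$, and I would subtract the corresponding polynomial in the $d_r$ and induct on the filtration degree.

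\emph{Main obstacle.} The hardest step is showing that the centre of the current algebra $\mathrm{U}(\gl{n}[x])$, or of its symmetric algebra, is generated by the elements $Ex^r$. I would first try the decomposition $\gl{n}[x]=\mathfrak{sl}_n[x]\oplus\mathbb{C}[x]E$, which reduces the claim to showing $S(\mathfrak{sl}_n[x])^{\mathfrak{sl}_n[x]}=\mathbb{C}$. For that I would evaluate invariants at generic points of $\mathfrak{sl}_n[x]^*$ and use that the coadjoint orbits there are dense, for example via a leading-term argument in the highest power of $x$.
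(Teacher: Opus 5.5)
The paper gives no proof of this statement; it is quoted from Molev's book. Your outline is essentially the standard proof given there, and it is correct: fusion for centrality, then the filtration $\deg \t{i}{j}{r}=r-1$ with $\mathrm{gr}\,\yang{n}\cong\mathrm{U}(\gl{n}[x])$, top symbols $\bar d_r=\sum_i E_{ii}x^{r-1}$, and the description of the centre of $\mathrm{U}(\gl{n}[x])$.

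The step you flag as the main obstacle is short and needs no orbit-density argument. Suppose an invariant $P\in S(\mathfrak{sl}_n[x])$ involves only the variables $Yx^r$ with $r\le m$. Then $\mathrm{ad}(Xx^{m+1})$ sends each of them to a variable $[X,Y]x^{r+m+1}$ that does not occur in $P$. So $\mathrm{ad}(Xx^{m+1})P=0$ splits into $\sum_Y \frac{\partial P}{\partial(Yx^r)}\,[X,Y]=0$ for each $r$ and all $X\in\mathfrak{sl}_n$, with $Y$ running over a basis. Since $\mathfrak{sl}_n$ has trivial centre, all partial derivatives of $P$ vanish, so $P$ is constant.
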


\begin{definition}
\label{gelfand_tsetlin_subalgebra_of_yangian}
    The Gelfand-Tsetlin subalgebra of $\yang{n}$ is a subalgebra generated by centers of $\yang{k},~ 1 \leq k \leq n$ corresponding to the chain of embeddings \eqref{yangian_embeddings}.
\end{definition} We will also denote that subalgebra by $\gz{n}$ when no confusion can arise. By Theorem \ref{theorem_of_center_yangian}, the Gelfand-Tsetlin subalgebra of $\yang{n}$ is generated by coefficients of the quantum minors
\begin{align*}
    t_{1\ldots i}^{1\ldots i}(u) ~\text{for all}~i\in\{1,\ldots,n\}.
\end{align*} 

\subsection{Drinfeld new realization.}There is another representation of $\yang{n}$ by generators and relations. 
Consider the following elements of $\yang{n}[[u^{-1}]]$:
\begin{align}
    h_i(u) = t^{1\ldots i}_{1\ldots i}(u+i-1)\cdot\big(t^{1\ldots i-1}_{1\ldots i-1}(u+i-1)\big)^{-1},\\
    f_i(u) = t^{1\ldots i-1,i+1}_{1\ldots i}(u+i-1)\cdot\big(t^{1\ldots i}_{1\ldots i}(u+i-1)\big)^{-1},\\
    e_i(u)=\big(t^{1\ldots i}_{1\ldots i}(u+i-1)\big)^{-1}\cdot t_{1\ldots i-1,i+1}^{1\ldots i}(u+i-1),
\end{align} where $h_i(u)$ is defined for $1\leq i \leq n$ and both $f_i(u)$ and $e_i(u)$ are defined for $1\leq i \leq n-1$.

We denote the $u^{-n}$ term of $h_i(u)$, $f_i(u)$ and $e_i(u)$ by $h_i^{(n)}$, $f_i^{(n)}$ and $e_i^{(n)}$ respectively. Notice that the formal series $h_i(u)$ starts with a constant term $1$, thus, it is invertible. For convenience, we denote $h_i^{-1}(u)$ by $h_i^{\prime}(u)$ and its coefficients by $h_i^{\prime\,(n)}$. 

\begin{theorem}[{\cite[Th. 3.1.5]{Molev}}]
   \label{new_realization}
   The coefficients of the formal series $h_i(u)$, $h_i^{\prime}(u)$ with $1\leq i \leq n$, and $f_i(u)$, $e_i(u)$ with $1\leq i \leq n-1$ are generators of $\yang{n}$ subject to the relations
   \begin{gather}
       h_i^{(0)}=1, ~~h_i(u)\cdot h_i^{\prime}(u)=1,~~ [h_i(u), h_j(u)]=0,\\
       [h_i(u),e_j(v)]=0, ~~ [h_i(u), f_j(v)]=0 ~~\text{if}~~i\not=j, j+1,\label{drinfeld_commutations}\\
       [e_i(u), e_j(v)]=0, ~~ [f_i(u), f_j(v)]=0, ~~\text{if}~~|i-j|>1,\\
       [e_i(u),f_j(v)]=0 ~~\text{if}~~i\not=j,\\
       (u-v)[e_i(u), f_i(v)] = h_i^{\prime}(u)h_{i+1}(u) - h_i^{\prime}(v)h_{i+1}(v), \\
       (u-v)[h_i(u), e_i(v)] = h_i(u)\big(e_i(v)-e_i(u)\big),\\
       (u-v)[h_i(u), f_i(v)] = \big(f_i(v)-f_i(u)\big) h_i(u),\\
       (u-v)[e_i(u),e_i(v)]=\big(e_i(v)-e_i(u)\big)^2,\\
       (u-v)[f_i(u),f_i(v)]=-\big(f_i(v)-f_i(u)\big)^2,\\
       [[e_i(u),e_j(v)], e_j(w)] + [[e_i(u),e_j(w)], e_j(v)]=0 ~~\text{if}~~|i-j|=1,\\
       [e_i(u),[e_i(v), e_j(w)]] + [e_i(v),[e_i(u), e_j(w)]] = 0 ~~\text{if}~~|i-j|=1,\\
       [[f_i(u),f_j(v)], f_j(w)] + [[f_i(u),f_j(w)], f_j(v)]=0 ~~\text{if}~~|i-j|=1,\\
       [f_i(u),[f_i(v), f_j(w)]] + [f_i(v),[f_i(u), f_j(w)]] = 0 ~~\text{if}~~|i-j|=1,\\
       [e_i^{(r)}, e_{i+1}^{(s+1)}] - [e_i^{(r+1)}, e_{i+1}^{(s)}] = -e_i^{(r)}e_{i+1}^{(s)},\\
       [f_i^{(r+1)}, f_{i+1}^{(s)}] - [f_i^{(r)}, f_{i+1}^{(s+1)}] = -f_{i+1}^{(s)}f_{i}^{(r)}
   \end{gather} for all possible indices $1\leq i,\,j \leq n$.
\end{theorem}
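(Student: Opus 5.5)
The plan is to follow the Gauss decomposition route of \cite[Ch.~3]{Molev}. There are three steps: (a) identify the series $h_i(u)$, $e_i(u)$, $f_i(u)$ with entries of a Gauss decomposition of $T(u)$; (b) verify that they satisfy all the listed relations; (c) show that these relations are defining, by comparing sizes via the PBW theorem.

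\textbf{Step 1 (Gauss decomposition and generation).} Since $T(u)$ has leading term $1$, it factors uniquely as $T(u)=F(u)H(u)E(u)$. Here $F(u)$ is lower unitriangular with entries $f_{ji}(u)$, $H(u)=\mathrm{diag}(h_1(u),\ldots,h_n(u))$, and $E(u)$ is upper unitriangular with entries $e_{ij}(u)$. The entries of the factors are quasideterminants of $T(u)$. Using the quantum analogue of the Jacobi/Sylvester identity for quantum minors, I would rewrite these quasideterminants as ratios of quantum minors. This should give exactly the formulas defining $h_i(u)$, $e_i(u)=e_{i,i+1}(u)$ and $f_i(u)=f_{i+1,i}(u)$, including the shift $u\mapsto u+i-1$. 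Next I would show, using only the RTT-relation, that $e_{i,j+1}^{(r)}=[e_{ij}^{(r)},e_j^{(1)}]$ and similarly for the $f$'s. Then every $t_{ij}(u)$ is a polynomial in the coefficients of $h_i,h_i^{\prime},e_i,f_i$, so these coefficients generate $\yang{n}$.

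\textbf{Step 2 (the relations hold).} For $n=2$ I would compute directly from \eqref{t_iju_relations}, writing
\begin{align*}
T(u)=\begin{pmatrix} h_1(u) & h_1(u)e_1(u)\\ f_1(u)h_1(u) & h_2(u)+f_1(u)h_1(u)e_1(u)\end{pmatrix}.
\end{align*}
This yields all relations involving a single index $i$: the $[e,f]$, $[h,e]$, $[h,f]$, $[e,e]$ and $[f,f]$ relations. To pass to general $n$, I would use Molev's homomorphism $\psi_m:\yang{n-m}\to\yang{n}$, which sends $T(u)$ to the matrix of quasideterminants with rows and columns $1,\ldots,m$ fixed. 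Its image commutes with $\yang{m}$, and it carries $h_1,e_1,f_1$ to $h_{m+1},e_{m+1},f_{m+1}$ up to a shift of $u$. Consequently, the rank-one relations for each $i$ follow from the $n=2$ case. The commutation relations for $|i-j|>1$, and the relations $[h_i,e_j]=0$, $[h_i,f_j]=0$, follow from this centralizer property combined with the embeddings \eqref{yangian_embeddings}. The remaining relations are the Serre relations and the mixed relations between $e_i$ and $e_{i+1}$, and between $f_i$ and $f_{i+1}$. These reduce in the same way to a direct computation in $\yang{3}$.

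\textbf{Step 3 (completeness).} Let $\widehat{Y}$ be the algebra abstractly presented by the generators and relations of the theorem. By Steps 1 and 2 there is a surjection $\widehat{Y}\to\yang{n}$. In $\widehat{Y}$, define $e_{ij}^{(r)}$ and $f_{ji}^{(r)}$ by the iterated commutators of Step 1. Using only the defining relations, I would show that ordered monomials in $h_i^{(r)}$, $e_{ij}^{(r)}$ and $f_{ji}^{(r)}$ span $\widehat{Y}$, with respect to the filtration $\deg x^{(r)}=r-1$. By the PBW theorem \cite[Th.~1.4.1]{Molev}, the images of these monomials are linearly independent in $\yang{n}$, since the associated graded algebra is $\mathrm{U}(\mathfrak{gl}_n[x])$. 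Hence the surjection is injective.

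I expect two obstacles. The first is the spanning argument in $\widehat{Y}$: it requires showing that the commutator of any two root generators is, modulo lower filtration degree, the expected combination. For this I would first establish the relations modulo lower degree, where they become the $\mathfrak{gl}_n[x]$ relations, and then argue by induction on degree. The second obstacle, of a more computational nature, is the $\yang{3}$ verification of the Serre and mixed relations.
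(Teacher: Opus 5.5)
The paper does not prove this theorem; it is quoted from Molev. Your outline (Gauss decomposition, reduction to ranks two and three via $\psi_m$ and the centralizer property, completeness via PBW) is the standard proof of it.

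The problem is that you set out to verify, and then to use as a presentation, the relations exactly as listed. That list is wrong in two ways, so Steps 2 and 3 cannot succeed as formulated.

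First, $(u-v)[h_i(u),f_i(v)]=\big(f_i(v)-f_i(u)\big)h_i(u)$ is false in $\yang{n}$. Its coefficient of $u^{0}v^{-1}$ says $[t^{(1)}_{11},t^{(1)}_{21}]=t^{(1)}_{21}$, whereas the defining relations give $-t^{(1)}_{21}$. Your own rank-one computation, from $t_{21}(u)=f_1(u)h_1(u)$ and $(u-v)[t_{11}(u),t_{21}(v)]=t_{21}(u)t_{11}(v)-t_{21}(v)t_{11}(u)$, gives instead $(u-v)[h_1(u),f_1(v)]=\big(f_1(u)-f_1(v)\big)h_1(u)$.

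Second, even with that sign fixed, the list omits the relations between $h_{i+1}$ and $e_i$, $f_i$, and these are not consequences of the others. Already for $n=2$, consider the assignment $h_1(u),h_2(u)\mapsto t_{11}(u)$, $h_1'(u),h_2'(u)\mapsto t_{11}(u)^{-1}$, $e_1(u)\mapsto t_{11}(u)^{-1}t_{12}(u)$, $f_1(u)\mapsto 0$. It respects every listed relation, with either sign in the $[h,f]$ relation. Yet it sends $[h_2^{(1)},e_1^{(1)}]+e_1^{(1)}$ to $2t^{(1)}_{12}\neq 0$, although this element equals $[t^{(1)}_{22},t^{(1)}_{12}]+t^{(1)}_{12}=0$ in $\yang{2}$. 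So your $\widehat{Y}$ surjects onto $\yang{2}$ with nonzero kernel, and its ordered monomials cannot span it. Nothing in the list lets you move $h_{i+1}^{(r)}$ past $e_i^{(s)}$ or $f_i^{(s)}$.

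What you should prove is the presentation with
\begin{gather*}
(u-v)[h_i(u),e_j(v)]=(\delta_{ij}-\delta_{i,j+1})\,h_i(u)\big(e_j(v)-e_j(u)\big),\\
(u-v)[h_i(u),f_j(v)]=(\delta_{ij}-\delta_{i,j+1})\,\big(f_j(u)-f_j(v)\big)h_i(u).
\end{gather*}
The case $i=j+1$ is the missing one. For $n=2$ it follows in your Step 2 from the centrality of $\qdet{u}=h_1(u)h_2(u-1)$.

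A few further corrections are needed:
\begin{itemize}
\item You need $[h_i(u),h_j(v)]=0$ in independent variables.
\item The mixed relations for $e_i,e_{i+1}$ and $f_i,f_{i+1}$ should be imposed only for $r,s\geq 1$. For $s=0$ they would force $[e_i^{(r)},e_{i+1}^{(1)}]=0$, whereas $[e_1^{(1)},e_2^{(1)}]=t^{(1)}_{13}$ in $\yang{3}$.
\item The relation $[e_i(u),f_{i\pm1}(v)]=0$ is not covered by your centralizer argument, since $e_i$ lies in $\yang{i+1}$ while $f_{i+1}$ only commutes with $\yang{i}$. It belongs with the $\yang{3}$ computations.
\end{itemize}
With these corrections your plan is the standard argument and goes through.
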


It is straightforward that $\gz{n}$ is generated by the coefficients of the formal series $h_i(u)$. Since the evaluation map is surjective, the image of that subalgebra is the Gelfand-Tsetlin subalgebra $\gz{n}$ of $\uni{n}$, defined in \ref{gelfand_tsetlin_subalgebra}.

An easy computation shows that under the evaluation homomorphism we have 

\begin{gather}
    \pi\big(h_i(u)\big) = \pi\big(1+\t{i}{i}{1}u^{-1}+\ldots\big) = 1 + E_{ii}u^{-1}+\ldots,\label{row_of_h_i}\\
    \pi\big(f_i(u)\big) = \pi\big(\t{i+1}{i}{1}u^{-1}+\ldots\big) =E_{i+1,i}u^{-1}+\ldots,\label{row_of_f_i}\\
    \pi\big(h_i(u)\big) = \pi\big(\t{i}{i+1}{1}u^{-1}+\ldots\big) = E_{i,i+1}u^{-1}+\ldots.\label{row_of_e_i}
\end{gather}
The elements $E_{i+1,i}$, $E_{i,i+1}$ and $E_{ii}$ are the standard generators of $\gl{n}$, thus the Drinfeld generators will be used to deduce Gelfand-Tsetlin formulas.

For a fixed index $1\leq i \leq n-1$ consider a subalgebra $\yang{2}^{(i)}$ of $\yang{n}$ generated by coefficients of the formal series
\begin{align*}
    e_i(u), f_i(u), h_i(u), h_{i+1}(u).
\end{align*}

Notice that the relations of $\yang{2}$ from Theorem \ref{new_realization} hold under the replacement of indices $(1,2)$ with $(i, i+1)$ for $1\leq i\leq n-1$. Therefore, we have the next proposition.

\begin{proposition}
\label{definition_of_yang2i}
    The subalgebra $\yang{2}^{(i)}$ is isomorphic to $\yang{2}$ under the map
    \begin{gather*}
        h_1(u) \mapsto h_i(u), ~h_2(u) \mapsto h_{i+1}(u), \\
        f_1(u) \mapsto f_i(u), ~ e_1(u) \mapsto e_i(u).
    \end{gather*}
\end{proposition}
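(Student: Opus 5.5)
The plan is to define the homomorphism $\varphi_i:\yang{2}\to\yang{n}$ on the Drinfeld generators of $\yang{2}$ by the stated assignment, show it is well defined, show it is injective, and read off that its image is $\yang{2}^{(i)}$. By Theorem \ref{new_realization} applied with $n=2$, $\yang{2}$ is presented by the coefficients of $h_1(u),h_1'(u),h_2(u),h_2'(u),e_1(u),f_1(u)$. For $n=2$ only the relations with $i=j=1$ and the $h$--$h$ commutativity survive; the Serre-type relations and the mixed $e_i,e_{i+1}$ relations are vacuous.

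\textbf{Well-definedness.} I would set $\varphi_i(h_1'(u))=h_i'(u)$ and $\varphi_i(h_2'(u))=h_{i+1}'(u)$, together with the stated images. Each defining relation of $\yang{2}$ then maps to the corresponding relation in Theorem \ref{new_realization} for $\yang{n}$ with indices $(i,i+1)$:
\begin{align*}
(u-v)[e_i(u),f_i(v)] = h_i'(u)h_{i+1}(u)-h_i'(v)h_{i+1}(v),
\end{align*}
together with the $h_i$--$e_i$, $h_i$--$f_i$, $e_i$--$e_i$ and $f_i$--$f_i$ relations and $[h_i(u),h_{i+1}(v)]=0$. Two relations are missing from the list. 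The first is the commutation of $h_{i+1}$ with $e_i,f_i$; it is not among the $i\neq j,j+1$ cases. I would check that the list in Theorem \ref{new_realization} (from \cite{Molev}) also contains $(u-v)[h_{i+1}(u),e_i(v)]=-h_{i+1}(u)(e_i(v)-e_i(u))$ and its $f$-analogue, as in Molev's theorem. If needed I would derive these from the quasideterminant formulas, or from the fact that $h_i(u)h_{i+1}(u-1)$-type products are central in the copy generated by these series. The second is the commutativity $[h_i(u),h_{i+1}(v)]=0$ with different spectral parameters; this follows since both lie in the commutative subalgebra $\gz{n}$. So $\varphi_i$ extends to an algebra homomorphism with image exactly the subalgebra generated by these coefficients, which is $\yang{2}^{(i)}$.

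\textbf{Injectivity.} This is the main obstacle, since relations holding does not rule out extra relations. I would use the PBW-type theorem for Drinfeld generators (as in \cite{Molev}, Ch.~3). Ordered monomials in $e_j^{(r)}$, $h_j^{(r)}$, $f_j^{(r)}$, or more precisely in root vectors $e_{jk}^{(r)}$, $f_{jk}^{(r)}$, form a basis of $\yang{n}$. Ordered monomials in $e_1^{(r)},h_1^{(r)},h_2^{(r)},f_1^{(r)}$ form a basis of $\yang{2}$, and these map to ordered monomials in $e_i^{(r)},h_i^{(r)},h_{i+1}^{(r)},f_i^{(r)}$. The latter are part of a PBW basis of $\yang{n}$, hence linearly independent, so $\varphi_i$ is injective.

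\textbf{Alternative injectivity argument.} If that PBW statement is not available, I would pass to the associated graded algebra with respect to the filtration $\deg t_{ij}^{(r)}=r-1$. There $\mathrm{gr}\,\yang{n}\cong \mathrm{U}(\gl{n}[x])$, and $e_i^{(r)},f_i^{(r)},h_i^{(r)}$ have symbols $E_{i,i+1}x^{r-1}$, $E_{i+1,i}x^{r-1}$, $E_{ii}x^{r-1}$. The graded map becomes the natural embedding $\mathrm{U}(\gl{2}[x])\hookrightarrow\mathrm{U}(\gl{n}[x])$ onto the rows and columns $i,i+1$, which is injective. A filtered map with injective graded map is injective.
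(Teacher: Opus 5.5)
Your homomorphism step is exactly the paper's proof. The paper's whole justification is the sentence before the proposition: the $\yang{2}$ relations of Theorem~\ref{new_realization} persist after replacing $(1,2)$ by $(i,i+1)$. That gives only a surjection $\yang{2}\to\yang{2}^{(i)}$, and the paper never discusses the kernel. Your injectivity step is therefore genuinely additional, and it is what actually makes the map an isomorphism. It works either via PBW in Drinfeld generators and root vectors, or via $\mathrm{gr}\,\yang{n}\cong\mathrm{U}(\gl{n}[x])$ with $e_i^{(r)},f_i^{(r)},h_i^{(r)}$ having symbols $E_{i,i+1}x^{r-1},E_{i+1,i}x^{r-1},E_{ii}x^{r-1}$. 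Both variants are sound, at the cost of importing a nontrivial structural theorem. The same leading-term fact is also what makes $\varphi_i$ a filtered map, which the graded argument needs.

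You are right that the paper's transcription of Theorem~\ref{new_realization} drops the case $(u-v)[h_{i+1}(u),e_i(v)]=-h_{i+1}(u)(e_i(v)-e_i(u))$ and its $f$-analogue. It also states $h$--$h$ commutativity at a single spectral parameter. So the paper's check tacitly relies on the full statement in Molev's book.

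If you ever use your fallback for the missing relation, argue directly that $h_i(u)h_{i+1}(u-1)=t^{1\ldots i+1}_{1\ldots i+1}(u+i-1)\,(t^{1\ldots i-1}_{1\ldots i-1}(u+i-1))^{-1}$ commutes with $e_i(v),f_i(v)$. The first factor is central in $\yang{i+1}$, and $e_i,f_i$ centralize $\yang{i-1}$. Do not argue via centrality in $\yang{2}^{(i)}$, which would presuppose the isomorphism.

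Finally, everything the paper does later only needs the surjection. It pulls back $\yang{2}^{(i)}$-modules to apply Theorems~\ref{classification_theorem_yang2} and~\ref{yangian_2_GF_realization}, and pullback preserves submodules and the action of the generating series. Injectivity matters only for the literal isomorphism claim.
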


\subsection{Representations of \texorpdfstring{$\yang{n}$}{Y(gl n)}}
Let $a(u) = \sum_{i=0}^{+\infty}a_iu^{-i}$ be an arbitrary formal series of $\yang{n}[[u^{-1}]]$, $V$ some $\yang{n}$-module. We define for each $v\in V$
\begin{align*}
    a(u)\cdot v = \suminf{j}{0} (a_j \cdot v) u^{-j} \in V[[u^{-1}]].
\end{align*}

\begin{definition}
    A $\yang{n}$-module $V$ is called a representation of the highest weight $\lambda$ with
    \begin{align*}
        \lambda = \big(\lambda_1(u), \ldots, \lambda_n(u)\big), ~\text{where}~\lambda_i(u)\in\mathbb{C}[[u^{-1}]],
    \end{align*} if one can find a vector $\zeta$ such that $\yang{n}\cdot \zeta$ equals $V$ and
    \begin{align*}
        h_i(u)\cdot \zeta=\lambda_i(u)\,\zeta, ~~~\forall~ i\in\{1,\ldots,n\},\\
        e_i(u)\cdot \zeta=0, ~~~\forall~ i\in\{1,\ldots,n-1\}.
    \end{align*} In this way we shall call $\zeta$ the highest vector. It is unique up to scalar multiplication.
\end{definition}

The description of representation of the highest weight can be given in the \textrm{RTT}-relation as well.

\begin{proposition}[{\cite[Pr. 3.2.2]{Molev}}]
\label{action_of_tij}
    The $\yang{n}$-module $V$ is a representation of the highest weight $\lambda$ if and only if one can find a vector $\widetilde{\zeta}$ such that $\yang{n}\cdot \widetilde{\zeta}$ equals to $V$ and
    \begin{align*}
        t_{ii}(u)\cdot \widetilde{\zeta}=\lambda_i(u)\zeta ~~~\forall 1\leq i\leq n,\\
        t_{ij}(u)\cdot \widetilde{\zeta}=0 ~~~\forall~ 1\leq i\leq j\leq n.
    \end{align*}
    Moreover, $\tilde \zeta = c \zeta, c \in \mathbb{C}\setminus\{0\}$.
\end{proposition}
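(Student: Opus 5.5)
The plan is to pass between the two descriptions using the quantum minors that define $h_i(u)$ and $e_i(u)$. In one direction we evaluate these minors on $\widetilde{\zeta}$ by a triangularity argument. In the other we use the Gauss decomposition $T(u)=F(u)H(u)E(u)$. I read the second condition as $t_{ij}(u)\widetilde{\zeta}=0$ for $i<j$, and I expect the weights in the two descriptions to agree only up to the shifts $u\mapsto u+i-1$ built into the definition of $h_i(u)$. Part of the work is to track these shifts and check that they match the convention intended in the statement.

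\emph{Step 1 (RTT $\Rightarrow$ Drinfeld).} Assume $t_{ij}(u)\widetilde{\zeta}=0$ for $i<j$ and $t_{ii}(u)\widetilde{\zeta}=\lambda_i(u)\widetilde{\zeta}$. Use the row-permutation formula for quantum minors,
\begin{align*}
t^{a_1\ldots a_m}_{b_1\ldots b_m}(u)=\sum_{\rho\in S_m}\sgn{\rho}\, t_{a_{\rho(1)}b_1}(u)\cdots t_{a_{\rho(m)}b_m}(u-m+1),
\end{align*}
and let the factors act on $\widetilde{\zeta}$ from right to left.
\begin{itemize}
\item For $t^{1\ldots i}_{1\ldots i-1,i+1}$ the rightmost factor is $t_{a,i+1}$ with $a\le i<i+1$. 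It kills $\widetilde{\zeta}$, so this minor annihilates $\widetilde{\zeta}$, and hence $e_i(u)\widetilde{\zeta}=0$. For this to make sense we need the factor $(t^{1\ldots i}_{1\ldots i})^{-1}$ to stand on the left, which it does in the definition of $e_i(u)$.
\item For $t^{1\ldots i}_{1\ldots i}$ the rightmost factor is $t_{a,i}$ with $a\le i$. It kills $\widetilde{\zeta}$ unless $a=i$, in which case it acts by the scalar $\lambda_i(u-i+1)$. Since $\widetilde{\zeta}$ stays an eigenvector, we can repeat with the next factor, which forces the row index $i-1$, and so on. By induction only $\rho=\mathrm{id}$ survives, giving
\begin{align*}
t^{1\ldots i}_{1\ldots i}(u)\widetilde{\zeta}=\lambda_1(u)\lambda_2(u-1)\cdots\lambda_i(u-i+1)\,\widetilde{\zeta}.
\end{align*}
\end{itemize}
Dividing the $i$-th such product by the $(i-1)$-th and substituting $u+i-1$ shows that $h_i(u)$ acts on $\widetilde{\zeta}$ by $\lambda_i(u)$. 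Finally, $\yang{n}\widetilde{\zeta}=V$ is assumed, so $\widetilde{\zeta}$ is a highest vector.

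\emph{Step 2 (Drinfeld $\Rightarrow$ RTT).} Write $T(u)=F(u)H(u)E(u)$, where $H(u)=\mathrm{diag}(h_1(u),\dots,h_n(u))$, $F$ is lower unitriangular and $E$ is upper unitriangular with entries $e_{kj}(u)$. This is the Gauss decomposition underlying Theorem \ref{new_realization}, with $e_{k,k+1}(u)=e_k(u)$. The entries $e_{kj}(u)$ for $j>k+1$ are obtained from the $e_k^{(r)}$ by iterated commutators, via the relation $e_{k,j+1}^{(r)}=[e_{kj}^{(r)},e_j^{(1)}]$. Consequently every $e_{kj}(u)$ with $k<j$ lies in the left ideal generated by the coefficients of the $e_l(u)$, and so it annihilates $\zeta$. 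Now
\begin{align*}
t_{ij}(u)=\sum_{k\le\min(i,j)}f_{ik}(u)\,h_k(u)\,e_{kj}(u).
\end{align*}
If $i<j$, every term has $k\le i<j$, so $t_{ij}(u)\zeta=0$. If $i=j$, only the term $k=i$ survives, with $e_{ii}=1$, so $t_{ii}(u)\zeta$ equals $f_{ii}h_i(u)\zeta=h_i(u)\zeta$ plus terms containing some $e_{kj}$ acting on $\zeta$, which vanish. This gives $t_{ii}(u)\zeta=\lambda_i(u)\zeta$, up to the same shift bookkeeping as in Step 1.

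\emph{Step 3 (uniqueness).} By the triangular PBW decomposition of $\yang{n}$, every element is a sum of products (lower part)$\cdot$(diagonal part)$\cdot$(upper part). Hence $V=\yang{n}\zeta$ is spanned by vectors obtained from $\zeta$ by lowering operators. Using the $\gl{n}$-weight grading by the $\t{k}{k}{1}$, the weight space of $\zeta$ is therefore $\mathbb{C}\zeta$. Any $\widetilde{\zeta}$ with the stated properties is killed by the raising operators and has the same eigenvalues, so it also has the weight of $\zeta$, and thus $\widetilde{\zeta}=c\zeta$.

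\emph{Main obstacle.} The delicate step is Step 2, specifically justifying that the higher entries $e_{kj}(u)$ are generated by the $e_l(u)$ so that they kill $\zeta$. I would cite this from \cite{Molev}, where the Gauss decomposition and the proof of Theorem \ref{new_realization} establish it, rather than rederive it here. The shift conventions between $t_{ii}(u)$ and $h_i(u)$ also need care.
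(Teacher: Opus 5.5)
The paper does not prove this statement; it quotes it from Molev's book. Your argument is correct and is the standard one: triangularity of the quantum minors acting on $\widetilde{\zeta}$ gives one direction, and the Gauss decomposition $T(u)=F(u)H(u)E(u)$ gives the converse, using $e_{k,j+1}^{(r)}=[e_{kj}^{(r)},e_j^{(1)}]$ to place every $e_{kj}^{(r)}$ with $k<j$ in the left ideal generated by the $e_l^{(s)}$. Uniqueness then follows from a weight-space argument.

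You can drop the shift caveat. As your Step 1 computation already shows, the argument $u+i-1$ in the definition of $h_i(u)$ makes it exactly the $i$-th diagonal Gauss factor. So $t_{ii}(u)\zeta=h_i(u)\zeta=\lambda_i(u)\zeta$, with the same $\lambda$ in both descriptions. Also, the $\lambda_i(u)\zeta$ on the right-hand side of the statement should read $\lambda_i(u)\widetilde{\zeta}$, which is how you already read it.
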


The following theorem is an analogue of Theorem \ref{gl_n_classification_theorem} for $\gl{n}$-modules.
\begin{theorem}[{\cite[Th. 3.2.7]{Molev}}]
\label{theorem_of_highest_weight_repr}
    \begin{enumerate}
        \item Every finite-dimensional irreducible representation of $\yang{n}$ is a representation of the highest weight.
        \item For each $\lambda(u) \in 1+ u^{-1}\mathbb{C}[[u^{-1}]]$ there is a unique irreducible representation $V_{\lambda(u)}$ of the highest weight $\lambda(u)$.
    \end{enumerate}
\end{theorem}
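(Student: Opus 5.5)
The plan is to argue part (1) first and then part (2). For part (1), take a finite-dimensional irreducible $\yang{n}$-module $V$. Consider the subalgebra $E$ generated by all coefficients of the $t_{ij}(u)$ with $i<j$. The commutation relations \eqref{t_ijk_relations} give $[\t{i}{i}{1},\t{k}{l}{s}]=\delta_{ki}\t{i}{l}{s}-\delta_{il}\t{k}{i}{s}$. So the elements $\t{i}{i}{1}$ act semisimply enough to define a weight grading, at least after passing to generalized eigenspaces. The $t_{ij}^{(s)}$ with $i<j$ raise the weight with respect to the $\mathfrak{gl}_n$-grading given by the $\t{i}{i}{1}$.

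Since $V$ is finite-dimensional, the subspace
\begin{align*}
    V^0=\{v\in V:\ t_{ij}(u)v=0 \text{ for all } i<j\}
\end{align*}
is nonzero. To see this, take a generalized weight space whose weight is maximal in the dominance order. Raising operators kill it, since no higher weights occur. Next one checks, using the RTT relation \eqref{t_iju_relations} with the indices ordered suitably, that the coefficients of the $t_{ii}(u)$ preserve $V^0$ and pairwise commute on it. For $i<j$ the right-hand side of \eqref{t_iju_relations} for $[t_{ii}(u),t_{jj}(v)]$ involves the products $t_{ji}t_{ij}$, and these vanish on $V^0$ once one shows the $t_{ji}$ term can be reordered. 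This is the step I expect to be the main obstacle: controlling the reordering on $V^0$ via the relation for $[t_{ij}(u),t_{ji}(v)]$. I would follow the standard argument of \cite[Th. 3.2.7]{Molev}.

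A commuting family on a finite-dimensional space has a common eigenvector $\zeta\in V^0$. Irreducibility gives $\yang{n}\zeta=V$. Then Proposition \ref{action_of_tij} converts this to the Drinfeld-type highest weight condition.

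For part (2), existence comes from a Verma-type construction. Let $M(\lambda(u))$ be the quotient of $\yang{n}$ by the left ideal generated by the $\t{i}{j}{k}$ with $i<j$ and by $\t{i}{i}{k}-\lambda_i^{(k)}$. The PBW theorem \cite[Th. 1.4.1]{Molev}, with the ordering lower-diagonal-upper, shows that $M(\lambda(u))\neq 0$ and that it is spanned by ordered monomials in the $t_{ij}^{(k)}$ with $i>j$ applied to the image $\zeta$ of $1$. This module is graded by weights relative to the $\t{i}{i}{1}$, and its $\lambda$-weight space is $\mathbb{C}\zeta$. Hence the sum of all submodules not containing $\zeta$ is the unique maximal proper submodule. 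The quotient is the irreducible $V_{\lambda(u)}$.

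For uniqueness, any irreducible module with highest vector $\zeta'$ of weight $\lambda(u)$ is a quotient of $M(\lambda(u))$, sending $1\mapsto\zeta'$. It is therefore isomorphic to the unique irreducible quotient.
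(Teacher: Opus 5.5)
The paper does not prove this statement; it quotes it from \cite{Molev}. Your outline is the standard argument from that source, so I am judging it on its own merits.

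\textbf{Part (1) is sound, and you do not need to defer the step you call the main obstacle.} Take $\eta\in V^0$ and $i<j$.
If $i<k$, use $(u-v)[t_{ij}(u),t_{kk}(v)]=t_{kj}(u)t_{ik}(v)-t_{kj}(v)t_{ik}(u)$.
If $k<j$, use the equivalent form $(u-v)[t_{ij}(u),t_{kk}(v)]=t_{ik}(v)t_{kj}(u)-t_{ik}(u)t_{kj}(v)$, obtained by swapping $(i,j,u)\leftrightarrow(k,l,v)$ in \eqref{t_iju_relations}.
Since $i<j$, one of these cases always holds. In either case the rightmost factor is a raising series, which kills $\eta$, so $V^0$ is stable under the $t_{kk}(v)$.
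For commutativity with $k<l$, you have $(u-v)[t_{kk}(u),t_{ll}(v)]\eta=\big(t_{lk}(u)t_{kl}(v)-t_{lk}(v)t_{kl}(u)\big)\eta=0$, because $t_{kl}$ acts on $\eta$ first. Nothing has to be reordered.

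\textbf{The genuine gap is in part (2), in the claim that PBW ``shows that $M(\lambda(u))\neq 0$''.} PBW does give the spanning statement. Nonvanishing, however, means $1\notin I$, and that does not follow just from having a basis of ordered monomials. The reason is that the $t_{ij}$-presentation has no triangular decomposition into subalgebras.
\begin{itemize}
\item The raising generators do not close: $(u-v)[t_{12}(u),t_{23}(v)]=t_{22}(u)t_{13}(v)-t_{22}(v)t_{13}(u)$.
\item The diagonal generators do not commute: $[t_{11}^{(2)},t_{22}^{(2)}]=t_{21}^{(1)}t_{12}^{(2)}-t_{21}^{(2)}t_{12}^{(1)}\neq0$.
\end{itemize}
So the algebra generated by the $t_{kk}^{(r)}$ and the $t_{ij}^{(r)}$ with $i<j$ is not spanned by diagonal-times-raising monomials. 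You therefore cannot just present $M(\lambda(u))$ as induced from a one-dimensional character of such a ``Borel'' and read off that it is isomorphic to the span of lowering monomials.

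One way to close the gap is as follows.
\begin{enumerate}
\item Let $W$ be the span of ordered monomials $LDU$ (lowering, then diagonal, then raising) whose raising part $U$ is nonempty.
\item PBW monomials are weight vectors for $\mathrm{ad}\,t_{kk}^{(1)}$. Hence every element whose weight is a nonzero nonnegative combination of the $\epsilon_i-\epsilon_j$, $i<j$, lies in $W$.
\item Combine this with induction on the filtration $\deg t_{ij}^{(r)}=r$, whose associated graded algebra is commutative. This shows that $W$ is a left ideal and that $W\,t_{kk}^{(r)}\subseteq W$.
\item Also $[t_{kk}^{(r)},t_{ll}^{(s)}]\in W$: write it using the form of \eqref{t_iju_relations} whose right factor is raising.
\item Define a linear map sending $LDU$ to $0$ if $U$ is nonempty, and to $\lambda(D)L$ if $U$ is empty. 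Here $\lambda(D)$ means replacing each $t_{kk}^{(r)}$ in $D$ by $\lambda_k^{(r)}$. This map kills $I$ and sends $1$ to $1$, so $M(\lambda(u))\neq0$.
\end{enumerate}
Alternatively, work in the Drinfeld presentation with the PBW theorem for Drinfeld generators. There the $f$-, $h$- and $e$-parts are genuine subalgebras and the $h$-part is commutative.

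Once nonvanishing is established, the rest of part (2) is correct: the one-dimensional $\mathfrak{gl}_n$-weight space gives the unique maximal proper submodule, and uniqueness follows because every irreducible module with highest weight $\lambda(u)$ is a quotient of $M(\lambda(u))$.
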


The next proposition yields information about an action of the Yangian center.
\begin{proposition}
\label{action_of_quantum_determinant}
    Let $V_{\lambda}$ be a finite-dimensional irreducible representation of the highest weight $\lambda$. Then the coefficients of the quantum determinant act on $V_{\lambda}$ by the next rule:
    \begin{align*}
        \qdet{u} \big|_{V_{\lambda}} = \lambda_1(u)\cdot\ldots\cdot\lambda_n(u-n+1).
    \end{align*}
\end{proposition}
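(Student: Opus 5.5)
The statement reads $\qdet{u}\big|_{V_{\lambda}} = \lambda_1(u)\cdot\ldots\cdot\lambda_n(u-n+1)$. The plan is to compute the action of $\qdet{u}$ on the highest vector $\zeta$ and then spread the result to all of $V_\lambda$ by centrality.

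\emph{Reduction to the highest vector.} By Theorem \ref{theorem_of_center_yangian} the coefficients of $\qdet{u}$ are central in $\yang{n}$. Suppose we know that $\qdet{u}\cdot\zeta=c(u)\zeta$ for some scalar series $c(u)$. Since $V_\lambda=\yang{n}\cdot\zeta$, every vector has the form $x\zeta$ with $x\in\yang{n}$. Then
\[
\qdet{u}\,x\zeta = x\,\qdet{u}\,\zeta = c(u)\,x\zeta .
\]
Finite-dimensionality is not even needed for this step. So it suffices to show $c(u)=\lambda_1(u)\cdots\lambda_n(u-n+1)$.

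\emph{Choosing the right formula.} Use the first formula in Definition \ref{quantum_determinant}, with row indices permuted and the identity permutation $q$:
\[
\qdet{u}=\sum_{\rho\in S_n}\sgn{\rho}\, t_{\rho(1)1}(u)\,t_{\rho(2)2}(u-1)\cdots t_{\rho(n)n}(u-n+1).
\]
The rightmost factor $t_{\rho(n)n}(u-n+1)$ acts on $\zeta$ first. By Proposition \ref{action_of_tij}, read with the correct convention that $t_{ij}\zeta=0$ for $i<j$, this factor annihilates $\zeta$ unless $\rho(n)\ge n$, that is, $\rho(n)=n$. In that case it acts by $\lambda_n(u-n+1)$.

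\emph{The induction and its obstacle.} We now induct on the factors from the right. After the last factor is fixed, the remaining product only involves the generators $t_{ij}$ with $i,j\le n-1$. Two facts are needed: $\zeta$ is still annihilated by $t_{ij}$ for $i<j\le n-1$, and it is an eigenvector of $t_{ii}$. Hence the remaining sum is the quantum determinant of $\yang{n-1}$ evaluated at $u$ applied to $\zeta$, and the same argument forces $\rho(n-1)=n-1$, and so on. Only $\rho=\mathrm{id}$ survives, giving
\[
\qdet{u}\,\zeta=\lambda_1(u)\lambda_2(u-1)\cdots\lambda_n(u-n+1)\,\zeta .
\]

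The one point requiring care is that Proposition \ref{action_of_tij} is phrased in terms of $t_{ii}$ eigenvalues $\lambda_i(u)$. One must also confirm that the vector $\widetilde\zeta$ there coincides with $\zeta$ up to a scalar, which the proposition states. With that, the whole argument is an ordering trick: choose the column-ordered formula so that each factor hits an already-diagonal vector.
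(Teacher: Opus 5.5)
Your proof is correct and follows the paper's argument. Both expand the column-ordered formula $\sum_{\rho}\mathrm{sgn}(\rho)\,t_{\rho(1)1}(u)\cdots t_{\rho(n)n}(u-n+1)$ on the highest vector, use $t_{\rho(n)n}(u-n+1)\zeta=\delta_{\rho(n)n}\lambda_n(u-n+1)\zeta$, and peel off one factor at a time down to $S_{n-1},S_{n-2},\ldots$. The only cosmetic difference is that you spread the eigenvalue to all of $V_\lambda$ via centrality and $V_\lambda=\yang{n}\cdot\zeta$ rather than by Schur's lemma, which avoids needing finite-dimensionality; you also rightly read the annihilation condition in Proposition \ref{action_of_tij} as $i<j$ rather than $i\le j$.
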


\begin{proof}
    By Schur's lemma, each coefficient of the Yangian center acts on $V_{\lambda}$ by multiplying on some scalar. Thus, it suffices to compute the action on the highest vector $\zeta$:
    \begin{align*}
        \sum_{\rho\in S_n}\sgn{\rho} t_{\rho(1)1}(u) \cdot \ldots \cdot t_{\rho(n)n}(u-n+1) \cdot \zeta.
    \end{align*} On account of Proposition \ref{action_of_tij}, we have
    \begin{align*}
        t_{\rho(n)n}(u-n+1)\cdot\zeta = \delta_{\rho(n)n}\lambda_n(u-n+1)\zeta,
    \end{align*} because $\rho(n)$ is less than or equal to $n$ for all $\rho\in S_n$. Identifying $S_{n-1}$ with a subgroup of $S_{n}$ which does not act on $n$ we deduce that
    \begin{align*}
        \qdet{u} \cdot \zeta = \lambda_n(u-n+1)\sum_{\rho\in S_{n-1}}\sgn{\rho}t_{\rho(1)1}(u) \cdot \ldots \cdot t_{\rho(n-1)n-1}(u-n+2) \cdot \zeta.
    \end{align*} The same argument remains valid for each $i\leq n$ and the proof is complete.
\end{proof}

Now let us introduce a class of irreducible $\yang{n}$-modules constructed by the evaluation homomorphism which was defined in \ref{evaluation_homomorphism}.

\begin{definition}
\label{evaluation_representation}
    Suppose that $V_{\lambda}$ is an irreducible representation of $\gl{n}$ of the highest weight
    \begin{align}
    \label{highest_weight_gln}
        \lambda = \big(\lambda_1,\ldots,\lambda_n\big).
    \end{align} Then $V_{\lambda}$ can be considered as an irreducible $\yang{n}$-module by using the evaluation homomorphism $\pi$. It is a simple matter to check that the $\gl{n}$-highest vector is the $\yang{n}$-highest vector and that the highest weight of $V_{\lambda}$ as $\yang{n}$-module is equal to
    \begin{align}
    \label{highest_weight_yang}
        \big(1+u^{-1}\lambda_1, \ldots, 1+u^{-1}\lambda_n\big).
    \end{align}

    It will cause no confusion if we use a notation $V_{\lambda}$ as the irreducible $\gl{n}$-module and the $\yang{n}$-module and $\lambda$ as \eqref{highest_weight_gln} and \eqref{highest_weight_yang} depending on context.
\end{definition}

We discussed in \hyperref[Section2]{Section 2} that any finite-dimensional irreducible $\gl{n}$-module $V_{\lambda}$ decomposes into a direct sum of irreducible $\gl{n-1}$-modules
\begin{align*}
    V_{\lambda}\big|_{\gl{n-1}} = \bigoplus_{\mu} V_{\mu}.
\end{align*} Consider $V_{\lambda}$ as the $\yang{n}$-module. Since the chains of embedding $\uni{n}$ and $\yang{n}$ are agreed with the evaluation homomorphism, we conclude that
\begin{align*}
    V_{\lambda}\big|_{\yang{n-1}} = \bigoplus_{\mu} V_{\mu},
\end{align*} where $V_{\mu}$ are corresponding irreducible evaluation representations of $\yang{n-1}$.

We defined the comultiplication $\Delta$ of $\yang{n}$ in \ref{comultiplication}, thus we are able to consider a tensor product of $\yang{n}$-module. We shall observe that $\Delta$ is not co-commutative, thus tensor product is not commutative in general. 

\begin{proposition}
\label{independence_of_order}
    Let $V_{\lambda^{(1)}}, \ldots, V_{\lambda^{(k)}}$ be the finite-dimensional evaluation irreducible $\yang{n}$-modules and consider a module
    \begin{align*}
        V = V_{\lambda^{(1)}} \otimes \ldots \otimes V_{\lambda^{(k)}}.
    \end{align*} Then the generators $\t{i}{j}{r}$ act by zero unless $r \leq k$ and if $V$ is irreducible, then the reordering the tensor factors gives a representation isomorphic to $V$.
\end{proposition}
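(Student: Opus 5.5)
The statement has two parts, and I would treat them separately: a polynomiality (vanishing) claim for the generators, and an independence-of-order claim for irreducible tensor products.

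\textbf{The vanishing claim.} I would iterate the comultiplication \eqref{comultiplication}, which by coassociativity gives
\begin{align*}
    \Delta^{(k)}\big(t_{ij}(u)\big) = \sum_{a_1,\ldots,a_{k-1}=1}^{n} t_{ia_1}(u)\otimes t_{a_1a_2}(u)\otimes\ldots\otimes t_{a_{k-1}j}(u).
\end{align*}
On each evaluation module $V_{\lambda^{(m)}}$, Proposition \ref{evaluation_homomorphism} says that $t_{ab}(u)$ acts as $\delta_{ab}+E_{ab}u^{-1}$, a polynomial in $u^{-1}$ of degree at most $1$. Hence on $V$ the series $t_{ij}(u)$ acts as a polynomial in $u^{-1}$ of degree at most $k$. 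Comparing coefficients of $u^{-r}$ then shows that $\t{i}{j}{r}$ acts by zero for $r>k$.

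\textbf{The reordering claim.} My plan is to show that the highest weight of $V$ does not depend on the order of the factors, and then invoke uniqueness in Theorem \ref{theorem_of_highest_weight_repr}. Let $\zeta_m$ be the highest vector of $V_{\lambda^{(m)}}$ and put $\zeta=\zeta_1\otimes\ldots\otimes\zeta_k$. I would apply the iterated coproduct formula to $\zeta$ and use Proposition \ref{action_of_tij} factorwise: $t_{ab}(u)\zeta_m=0$ for $a<b$ and $t_{aa}(u)\zeta_m=\lambda^{(m)}_a(u)\zeta_m$.

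\begin{enumerate}
    \item For $i<j$, in each summand some index must strictly increase, say $a_{s-1}<a_s$ with $a_0=i$ and $a_k=j$. The corresponding factor kills $\zeta_s$, so $t_{ij}(u)\zeta=0$.
    \item For $i=j$, any summand in which the indices are not all equal to $i$ must contain an increase, so only the constant chain survives. This gives
    \begin{align*}
        t_{ii}(u)\zeta=\prod_{m=1}^{k}\big(1+\lambda^{(m)}_iu^{-1}\big)\,\zeta.
    \end{align*}
\end{enumerate}
The eigenvalue in (2) is a product of commuting scalar series and is therefore symmetric in the order of the factors.

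Now suppose $V$ is irreducible. Then $\yang{n}\cdot\zeta=V$ automatically, so by Proposition \ref{action_of_tij} the module $V$ is a highest weight module with the symmetric highest weight above.

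\textbf{Main obstacle.} The delicate step is that the reordered product $V'$ must also be irreducible; otherwise the uniqueness statement cannot be applied to it. I would first try a duality or dimension argument. Both $V$ and $V'$ have the same dimension, and $V'$ contains $\zeta'$ with the same highest weight, so $\yang{n}\cdot\zeta'$ has the irreducible $V_{\lambda(u)}\cong V$ as a quotient. Since $\dim V=\dim V'$, this forces $\yang{n}\cdot\zeta'=V'$ and $V'\cong V$, provided $\zeta'$ is cyclic in $V'$. I would establish that cyclicity using the dual statement: $V^*$ with the opposite order of factors is irreducible. Failing that, the fallback is to cite the known criterion (Molev's book) that irreducibility of tensor products of evaluation modules is expressed through symmetric conditions on the pairs of highest weights, which is manifestly order-independent. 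That symmetry step is the one I expect to require outside input.
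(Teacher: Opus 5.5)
The paper gives no proof of this proposition; it is part of the background from Molev's book that Section~3 states without proof. Your argument therefore has to stand on its own, and it does, more completely than you realize. The vanishing claim via the iterated coproduct is correct. So is the computation that $\zeta=\zeta_1\otimes\ldots\otimes\zeta_k$ satisfies the conditions of Proposition~\ref{action_of_tij} with weight $\prod_m(1+\lambda^{(m)}_iu^{-1})$, which is symmetric in the factors. The key observation there is that a non-constant chain $a_0,\ldots,a_k$ with $a_0\le a_k$ must contain a strict ascent.

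What you call the main obstacle is not one. The proviso ``provided $\zeta'$ is cyclic in $V'$'' is exactly what your own dimension count proves, so neither duality nor an external criterion is needed. Set $W=\yang{n}\cdot\zeta'\subseteq V'$, and let $N$ be a maximal proper submodule of $W$, which exists since $W$ is finite-dimensional and nonzero.

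\begin{enumerate}
\item Since $\zeta'$ generates $W$, we have $\zeta'\notin N$.
\item Hence $W/N$ is irreducible and generated by a nonzero vector satisfying the conditions of Proposition~\ref{action_of_tij}, with the same weight as $\zeta$.
\item By the uniqueness part of Theorem~\ref{theorem_of_highest_weight_repr}, $W/N\cong V$.
\item Therefore $\dim V'\ge\dim W\ge\dim V=\dim V'$, which forces $W=V'$ and $N=0$, i.e.\ $V'\cong V$.
\end{enumerate}

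Irreducibility of $V'$ thus comes out as a consequence; it is not needed as an input.

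This route is also the better one. Your fallback of quoting a symmetric irreducibility criterion is available in the paper only for $\yang{2}$ (Theorem~\ref{theorem_string_conditions}), while the statement concerns $\yang{n}$. Dualizing reverses the order of the factors and replaces each factor by its dual, so on its own it would not handle an arbitrary permutation. Present the dimension argument as the proof and drop the hedging.
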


\newcommand{\Lira}[1]{\Lir{\alpha_{#1}}{\beta_{#1}}}
\subsection{Classification of \texorpdfstring{$\yang{2}$}{Y(gl 2)}-representations}
We follow the notation of \cite{Molev} and use $\Lir{\alpha}{\beta}$ to denote the evaluation representation of $\yang{2}$ of the highest weight $(\alpha,\beta)$.

\begin{definition}
\label{string_definition}
    The string $\mathit{S}(\alpha,\beta)$ of two complex numbers $\alpha$ and $\beta$ such that
    \begin{align*}
        \alpha-\beta\in\mathbb{Z}_{\geq 0}
    \end{align*} is a set $\{\beta, \beta+1,\ldots, \alpha-1\} \subset \mathbb{C}$.
\end{definition}

\begin{definition}
    We call two strings $\mathcal{S}_{1}$ and $\mathcal{S}_{2}$ to be in general position if one of next conditions is true
    \begin{itemize}
        \item $\mathcal{S}_{1}\subset \mathcal{S}_{2}$ or $\mathcal{S}_{2}\subset\mathcal{S}_{1}$,
        \item a set $\mathcal{S}_{1}\cup\mathcal{S}_{2}$ is not a string for any numbers.
    \end{itemize}
\end{definition}

\begin{theorem}[{\cite[Cor. 3.3.6]{Molev}}]
\label{theorem_string_conditions}
    The $\yang{2}$-module
    \begin{align*}
        \Lira{1} \otimes \ldots \otimes \Lira{k}
    \end{align*} is irreducible if and only if each pair of the corresponding strings $\mathcal{S}(\alpha_i,\beta_i)$ is in general position.
\end{theorem}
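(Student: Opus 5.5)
This is Molev's Corollary 3.3.6. The natural route is through Drinfeld polynomials and explicit highest/lowest vectors, together with Proposition \ref{independence_of_order}.

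\emph{Step 1: the highest weight.} For $V=\Lira{1}\otimes\ldots\otimes\Lira{k}$, the tensor product of the highest vectors $\zeta=\zeta_1\otimes\ldots\otimes\zeta_k$ is annihilated by $t_{12}(u)$. This follows from the comultiplication \eqref{comultiplication}, which is triangular on such vectors. Using Proposition \ref{action_of_tij}, the vector $\zeta$ generates a highest weight submodule with weight
\begin{align*}
\Big(\prod_i (1+\alpha_i u^{-1}),\ \prod_i(1+\beta_i u^{-1})\Big).
\end{align*}
After normalizing by \eqref{multiplication_automorphism}, the irreducible quotient is determined by the Drinfeld polynomial
\begin{align*}
P(u)=\prod_i (u+\beta_i)(u+\beta_i+1)\cdots(u+\alpha_i-1),
\end{align*}
that is, by the multiset of strings. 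By Theorem \ref{theorem_of_highest_weight_repr}, $V$ is irreducible iff $\dim V=\prod_i(\alpha_i-\beta_i+1)$ equals the dimension of $\Lir{}{}$ with this $P(u)$.

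\emph{Step 2: reduction to two factors.} I first show that $V$ is irreducible iff $V$ is cyclic on $\zeta$ and its dual (equivalently, the opposite-ordered product) is also cyclic on $\zeta$. Both conditions reduce to the statement that every vector killed by all $t_{12}^{(r)}$ is proportional to $\zeta$.

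Next I handle $k=2$ directly. In $L(\alpha_1,\beta_1)\otimes L(\alpha_2,\beta_2)$, the $\mathfrak{gl}_2$-decomposition has singular vectors of weights $(\alpha_1+\alpha_2-m,\beta_1+\beta_2+m)$. I compute the action of $t_{12}^{(2)}$ on each candidate $\mathfrak{gl}_2$-singular vector. This is a one-variable recursion, and it yields zero exactly when $m$ hits the value where the strings are in special position, i.e. when $\alpha_2=\beta_1+m$ or $\alpha_1=\beta_2+m$ lies in the admissible range. That is exactly the condition that the union of the strings is a string and neither contains the other.

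\emph{Step 3: general $k$.} Necessity: if some pair $i,j$ is not in general position, first reorder the factors. This is allowed after a preliminary argument that the irreducibility criterion is symmetric, which is obtained by checking the $R$-matrix intertwiner for pairs in general position. The pair then becomes adjacent, and its non-irreducible two-fold product yields a proper submodule. Sufficiency: I argue by induction. Order the factors so that the strings are sorted by, say, $\beta_i$. Then a singular vector in $V$ must project, via the coproduct, onto singular vectors in the two-factor pieces. The two-factor result then forces it to be $\zeta$.

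\emph{Main obstacle.} The hardest part is the sufficiency induction, namely controlling singular vectors in a $k$-fold product. The Drinfeld-polynomial route, which uses the known dimension of $\Lir{}{}$ as a product of evaluation modules after regrouping strings, may be cleaner, and I would try it first.
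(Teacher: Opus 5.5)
The paper gives no proof of this statement; it quotes it from Molev. So your proposal has to stand on its own, and its sufficiency half for $k\ge 3$ is missing. You claim a singular vector of $V$ ``projects via the coproduct onto singular vectors in the two-factor pieces'', but nothing supports this: the coproduct gives no equivariant map from $V$ to a two-fold product $L(\alpha_i,\beta_i)\otimes L(\alpha_j,\beta_j)$.

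What the coproduct actually gives is the following. Write $V=L(\alpha_1,\beta_1)\otimes W$ and $v=\sum_{p\le P}E_{21}^p\zeta_1\otimes w_p$ with $w_P\neq 0$. Then $w_P$ is singular in $W$, so by induction it may be normalized to $\zeta_W$. With $T_{ij}(u)=u^{k-1}t_{ij}(u)$ on $W$, the $E_{21}^{P-1}\zeta_1$-component gives
\[
(u+\alpha_1-P+1)\,T_{12}(u)\,w_{P-1}=-P(\alpha_1-\beta_1-P+1)\prod_{i\ge 2}(u+\beta_i)\,\zeta_W .
\]
This only forces $\beta_j=\alpha_1-P+1$ for some $j\ge 2$. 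That is compatible with general position: it happens, for instance, when $\mathcal{S}(\alpha_j,\beta_j)\subset\mathcal{S}(\alpha_1,\beta_1)$, $\beta_j>\beta_1$ and $P=\alpha_1-\beta_j+1$. Ruling out $P\ge 1$ therefore requires controlling all lower components, and these depend on $W$ as a whole, not on pairs of factors. ``Pairwise irreducible implies irreducible'' is a genuinely nontrivial statement and cannot be waved through.

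There is also a smaller inaccuracy in Step 2. In a fixed order one finds $t_{12}^{(2)}w_m=c_m(\alpha_1-\beta_2-m+1)\,w_{m-1}$ with $c_m\neq 0$. So only the special position with $\mathcal{S}(\alpha_1,\beta_1)$ to the left of $\mathcal{S}(\alpha_2,\beta_2)$ produces an extra singular vector. The mirror configuration shows up as non-cyclicity and is seen only in the reversed product. Your duality set-up covers this, but your stated condition conflates the two cases.

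Your fallback ``Drinfeld-polynomial route'' can close the gap, but not as you phrase it. The ``known dimension'' of the irreducible module with polynomial $P(u)$ is exactly what sufficiency asserts. To make it non-circular:
\begin{enumerate}
\item Apply Theorem~\ref{classification_theorem_yang2}, which precedes this corollary in Molev. The irreducible quotient of $\yang{2}\cdot\zeta$ is, up to twisting, some tensor product $V'$ of evaluation modules. Since $V'$ is irreducible, the necessity direction shows its strings are pairwise in general position. Their union is the root multiset of $P(-u)$, the same as for $V$.
\item Necessity itself is fine and needs no $R$-matrix argument. If $V$ were irreducible, Proposition~\ref{independence_of_order} lets you reorder so that a bad pair is adjacent. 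A proper submodule of that two-fold product, tensored with the remaining factors, is then a proper submodule of the reordered module, which Proposition~\ref{independence_of_order} says is isomorphic to $V$.
\item The missing ingredient is the lemma that a finite multiset admits \emph{at most one} decomposition into strings pairwise in general position. Take $b$ in the multiset with $b-1$ not in it. The longest string of the decomposition starting at $b$ must be the maximal run $\{b,\ldots,c\}$ of the support, because $c+1$ lying in another string would put the two strings in special position. Remove that string and induct.
\end{enumerate}
Hence $\dim V'=\prod_i(\alpha_i-\beta_i+1)=\dim V$, and your Step~1 finishes the proof.
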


The next theorem provides a full classification of finite-dimensional irreducible representations of $\yang{2}$.

\begin{theorem}[{\cite[Th. 3.3.3]{Molev}}]
\label{classification_theorem_yang2}
    An irreducible representation $V$ of the highest weight $\big(\lambda_1(u), \lambda_2(u)\big)$ is finite-dimensional if and only if one can find a polynomial $P(u)\in\mathbb{C}[u]$ such that
    \begin{align*}
        \frac{\lambda_1(u)}{\lambda_2(u)} = \frac{P(u+1)}{P(u)}.
    \end{align*} In that case, there exists a formal power series $f(u)\in\mathbb{C}[[u^{-1}]]$ with constant term equal to $1$ such that after composition with the automorphism of multiplication \ref{multiplication_automorphism} the representation is isomorphic to
    \begin{align*}
        \Lira{1} \otimes \ldots \otimes \Lira{k}
    \end{align*} for some pairs $\big(\alpha_i, \beta_i\big)\in\mathbb{C}^{2}$. The pair $\big(f(u), P(u)\big)$ uniquely corresponds with the representation.

    Conversely, a pair $\big(f(u), P(u)\big)$ with arbitrary $f(u)\in 1+u^{-1}\cdot\mathbb{C}[[u^{-1}]]$ and $P(u)\in\mathbb{C}[u]$ with the leading term equal to $1$ corresponds to a certain finite-dimensional irreducible representation.
\end{theorem}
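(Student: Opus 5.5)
The statement to be proved is Theorem \ref{classification_theorem_yang2}, a result quoted from \cite{Molev}. I plan to follow the standard route: highest weight theory reduces everything to a condition on the ratio $\lambda_1(u)/\lambda_2(u)$, and tensor products of evaluation modules realize every allowed ratio.

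\emph{Necessity.} Suppose $V$ is finite-dimensional with highest vector $\zeta$. By Proposition \ref{action_of_tij}, $t_{11}(u)\zeta=\lambda_1(u)\zeta$, $t_{22}(u)\zeta=\lambda_2(u)\zeta$ and $t_{12}(u)\zeta=0$. Multiplying by a suitable $f(u)$ via \eqref{multiplication_automorphism}, I may assume $\lambda_2(u)=1$. Next I consider the $\gl{2}$-weights given by $\t{1}{1}{1}$ and $\t{2}{2}{1}$, which generate a copy of $\gl{2}$ by the proof of Proposition \ref{evaluation_homomorphism}. Finite-dimensionality forces $\zeta$ to generate a finite-dimensional $\gl{2}$-module, so $\t{1}{1}{1}-\t{2}{2}{1}$ has a nonnegative integer eigenvalue $m$ on $\zeta$. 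Moreover the vectors $t_{21}(v)\zeta$ for all $v$ span a finite-dimensional space.

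The key computation is to apply \eqref{t_iju_relations} with the pairs $(1,2),(2,1)$ and $(1,1),(2,1)$ to $\zeta$. This expresses $t_{12}(u)t_{21}(v)\zeta$ as
\[
\frac{\lambda_1(u)\lambda_2(v)-\lambda_1(v)\lambda_2(u)}{u-v}\,\zeta .
\]
By finite-dimensionality, the span of the coefficients of $t_{21}(v)\zeta$ has dimension $d<\infty$. Consequently $\lambda_1(u)/\lambda_2(u)$, expanded in $u^{-1}$, must be the expansion of a rational function whose numerator and denominator have degree $d$. Iterating with $t_{21}(v_1)\cdots t_{21}(v_k)\zeta$ and the vanishing of high weight spaces should then show that this rational function has the form $P(u+1)/P(u)$.

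I expect this last step to be the main obstacle: passing from ``rational'' to ``a ratio of shifted polynomials''. My first attempt would be induction on $m$. Write $\lambda_1(u)/\lambda_2(u)=\prod_i (u-a_i)/(u-b_i)$. Then show that if the multisets $\{a_i\}$ and $\{b_i\}$ cannot be paired as $a=b+$(positive integer) to form strings, some vector $t_{21}(b)\zeta$ (after renormalising poles) is a new highest vector. That would contradict irreducibility, or else produce infinitely many independent vectors.

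\emph{Sufficiency and uniqueness.} Given a monic $P(u)=\prod_{i=1}^k(u+\beta_i)$ with roots grouped into strings, I would form the tensor product $\Lira{1}\otimes\ldots\otimes\Lira{k}$. Its factors are chosen so that the strings $\mathcal{S}(\alpha_i,\beta_i)$ are in general position, which is always possible by merging overlapping strings. By Theorem \ref{theorem_string_conditions} this product is irreducible. Using \eqref{comultiplication} and the upper-triangular action on the tensor product of highest vectors, its highest weight is $\big(\prod(1+\alpha_iu^{-1}),\prod(1+\beta_iu^{-1})\big)$. The ratio of these is
\[
\prod_i\frac{u+\alpha_i}{u+\beta_i}=\frac{P(u+1)}{P(u)},
\]
since each string telescopes. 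Twisting by $f(u)$ through \eqref{multiplication_automorphism} gives an arbitrary highest weight with this ratio. Finally, uniqueness of the pair $(f,P)$ follows from Theorem \ref{theorem_of_highest_weight_repr}: the highest weight determines $V$, and conversely $V$ determines the highest weight. The ratio $P(u+1)/P(u)$ determines the monic $P$ because it determines its roots modulo the string structure, and then $f(u)$ is recovered from $\lambda_2(u)$.
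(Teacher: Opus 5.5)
The paper gives no proof of this statement; it is quoted from Molev's book, so your argument has to stand on its own. Your sufficiency and uniqueness parts are fine and essentially match the remarks the paper makes right after the statement. One caveat: uniqueness of $f(u)$ requires a normalised choice of the pairs, for example no one-dimensional factors $\Lir{c}{c}$, since tensoring with such a factor changes $f$. The rationality step is also correct, but it needs a reason. Write $\phi=\lambda_1/\lambda_2=\sum_r\phi_ru^{-r}$. The coefficient of $u^{-p}v^{-q}$ in $\frac{\phi(u)-\phi(v)}{u-v}$ is $-\phi_{p+q-1}$, so you are using Kronecker's theorem that a finite-rank Hankel matrix comes from a rational function. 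The sign in your display is reversed, harmlessly: $(u-v)\,t_{12}(u)t_{21}(v)\zeta=(\lambda_2(u)\lambda_1(v)-\lambda_2(v)\lambda_1(u))\zeta$.

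The genuine gap is the step you flag yourself: passing from ``$\lambda_1/\lambda_2$ is rational'' to ``$\lambda_1/\lambda_2=P(u+1)/P(u)$''. This is the whole content of the ``only if'' direction, and you give only a strategy whose concrete mechanism does not work.

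First, to evaluate $t_{21}$ at a point you must twist so that $u^kt_{11}(u)\zeta=A(u)\zeta$ and $u^kt_{22}(u)\zeta=B(u)\zeta$ with coprime monic $A,B$. You then need $T_{ij}(u)=u^kt_{ij}(u)$ to act by polynomials, and this needs proof. For example, realise $V$ as a quotient of the cyclic span of the product of highest vectors in a tensor product of evaluation modules of $\gl{2}$ Verma modules.

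Second, even then \eqref{T_iju_relations} gives $T_{12}(u)T_{21}(b)\zeta=\frac{B(u)}{u-b}A(b)\zeta\neq0$ for every root $b$ of $B$. So $t_{21}(b)\zeta$, however you renormalise the pole, is never a new highest vector.

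Third, the pairing you aim at has the wrong direction. $\frac{u-a}{u-b}=\frac{Q(u+1)}{Q(u)}$ with $Q$ a polynomial forces $b-a\in\mathbb{Z}_{\geq0}$, not $a-b$.

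What is actually needed is a way to produce nonzero vectors of $\gl{2}$-weight below $-m$ whenever no complete pairing exists. You also supply no induction mechanism, that is, no way to pass to a smaller finite-dimensional module. The natural tool is the chains $w_s=T_{21}(a+s-1)\cdots T_{21}(a)\zeta$ at roots $a$ of $A$, with the identity $-T_{12}(a+s)w_s=A(a+s)B(a+s-1)w_{s-1}$ as in the proof of Theorem \ref{yangian_2_GF_realization}. This is not enough by itself. Take $A(u)=u(u-1)(u-2)$ and $B(u)=(u-5)(u-\tfrac13)(u-\tfrac23)$. Then $m=3$ and no pairing exists, yet the vectors certified nonzero this way never go below weight $-3$. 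So as it stands, your proposal proves sufficiency and uniqueness but not necessity.
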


The polynomial $P(u)$ is called the Drinfeld polynomial. The condition for $f(u)$ can be written in a way
\begin{align*}
        f(u)\lambda_1(u) = (1+\alpha_1u^{-1})\cdot\ldots\cdot(1+\alpha_ku^{-1}),\\
        f(u)\lambda_2(u) = (1+\beta_1u^{-1})\cdot\ldots\cdot(1+\beta_ku^{-1}).
\end{align*} The quotient $\frac{\lambda_{1}(u)}{\lambda_{2}(u)}$ does not change after the automorphism of multiplication, thus, the Drinfeld polynomial corresponds to a tensor product
\begin{align*}
    \Lira{1} \otimes \ldots \otimes \Lira{k}.
\end{align*} Namely the roots of $P(-u)$ is exactly the elements of the corresponding strings $\mathcal{S}(\alpha_i, \beta_i)$.

%% file: 4_GZ_Basis_for_Y_gl2.tex
\subsection{Explicit realization of certain irreducible representations \texorpdfstring{$\yang{2}$}{Y(gl 2)}}

Let us consider an irreducible representation of the type
\begin{align*}
    V=\Lira{1}\otimes\ldots\otimes\Lira{k},
\end{align*} which corresponds to the pair $\big(1, P(u)\big)$: for an arbitrary representation the received formulas for $t_{ij}(u)$ should be multiplied by $f(u)$, see Theorem \ref{classification_theorem_yang2} and discussion further.

We strengthen the assumption on the corresponding strings given in Theorem \ref{theorem_string_conditions}; from now on, it is required that each pair of the strings $\mathcal{S}(\alpha_i, \beta_i)$ does not intersect.

We show that the Gelfand-Tsetlin subalgebra of $\yang{2}$ acts by semisimple operators in such representations. Moreover, that assumption on intersections is equivalent to semi-simplicity of $\gz{2}$ action, see \cite[Th. 4.1]{NazarovTarasov2}.

We prove the following auxiliary proposition.

\begin{proposition}
    \label{gz_eigen_basis_proposition}
    Suppose that $W$ is a finite-dimensional irreducible $\yang{2}$-module of the highest weight.
    \begin{enumerate}
        \item The basis of representation $W$ is eigenbasis for $\gz{2}$ if and only if it is eigenbasis for $t_{11}(u)$;
        \item the action of $t_{11}(u)$, $t_{12}(u)$ and $t_{21}(u)$ in that eigenbasis is effectively describe the action of $\yang{2}$.
    \end{enumerate}
\end{proposition}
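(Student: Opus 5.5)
For part (1), I will use the description of $\gz{2}$ given after Definition \ref{gelfand_tsetlin_subalgebra_of_yangian}. By Theorem \ref{theorem_of_center_yangian}, $\gz{2}$ is generated by the coefficients of the quantum minors $t_{1}^{1}(u)=t_{11}(u)$ and $t^{12}_{12}(u)=\qdet{u}$. The coefficients of $\qdet{u}$ are central. $W$ is a finite-dimensional irreducible highest weight module, so by Schur's lemma (or directly by Proposition \ref{action_of_quantum_determinant}) they act by the scalars given by the coefficients of $\lambda_1(u)\lambda_2(u-1)$.

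Hence a basis of $W$ diagonalizes $\gz{2}$ if and only if it diagonalizes all coefficients $\t{1}{1}{r}$, that is, $t_{11}(u)$. One direction is immediate, since $t_{11}(u)$ has coefficients in $\gz{2}$. For the other, an algebra generated by simultaneously diagonal operators and scalars is diagonal in that basis, since products and sums of diagonal matrices are diagonal.

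For part (2), the claim is that the operators $t_{11}(u)$, $t_{12}(u)$, $t_{21}(u)$ determine the action of the whole Yangian. It suffices to recover the action of $t_{22}(u)$, because the coefficients of the $t_{ij}(u)$ generate $\yang{2}$. I will use the quantum determinant in the form of Definition \ref{quantum_determinant} with $q=\mathrm{id}$, taken in the second form:
\begin{align*}
    \qdet{u} = t_{11}(u-1)t_{22}(u) - t_{12}(u-1)t_{21}(u).
\end{align*}
Since $t_{11}(u-1)$ has constant term $1$, it is invertible in $\yang{2}[[u^{-1}]]$. Solving gives
\begin{align*}
    t_{22}(u) = t_{11}(u-1)^{-1}\bigl(\qdet{u} + t_{12}(u-1)t_{21}(u)\bigr).
\end{align*}
On $W$, $\qdet{u}$ is the scalar series $\lambda_1(u)\lambda_2(u-1)$. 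Therefore the action of $t_{22}(u)$, coefficient by coefficient, is expressed through the actions of $t_{11}$, $t_{12}$, $t_{21}$ and known scalars. In the eigenbasis from (1), $t_{11}(u-1)^{-1}$ acts diagonally by the inverse eigenvalue series, so the formula is explicit.

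The main point needing care is the correct form of the quantum determinant expansion for $n=2$ and the shift conventions. I would verify it against the two equivalent formulas of Definition \ref{quantum_determinant}. If needed, I would instead use the first form, $t_{11}(u)t_{22}(u-1)-t_{21}(u)t_{12}(u-1)$, and solve for $t_{22}(u-1)$ by multiplying on the left by $t_{11}(u)^{-1}$. Either version suffices, and the same argument goes through.
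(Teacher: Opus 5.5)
Your proof is correct and follows the paper's route. As in the paper, $\gz{2}$ is generated by the coefficients of $t_{11}(u)$ and the central series $\qdet{u}$, which acts by scalars, and $t_{22}(u)$ is recovered from $t_{11}(u)$, $t_{12}(u)$, $t_{21}(u)$ and $\qdet{u}$, with $t_{11}^{-1}$ acting diagonally in the eigenbasis. The only difference is the identity used: yours, $t_{22}(u)=t_{11}(u-1)^{-1}\bigl(\qdet{u}+t_{12}(u-1)t_{21}(u)\bigr)$, comes straight from Definition \ref{quantum_determinant} with the shifts right, whereas the paper's formula as printed drops a shift (its first term should be $\qdet{u+1}\,t_{11}(u+1)^{-1}$, as one sees by applying both sides to the highest vector).
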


\begin{proof}
    Both statements are a consequence of Proposition \ref{action_of_quantum_determinant}, which asserts that $\qdet{u}$ acts by scalar in $W$. Namely the subalgebra $\gz{2}$ generates by $t_{11}(u)$ and $\qdet{u}$, so the first statement is proved.

    For the second statement it is suffices to prove that we can compute the action of $t_{22}(u)$, and it is easy to check that
    \begin{align}
        \label{formula_for_t_22_by_another}
        t_{22}(u) = \bigg(\qdet{u} \cdot \big(t_{11}(u)\big)^{-1} + t_{21}(u)\cdot \big(t_{11}(u)\big)^{-1} \cdot t_{12}(u)\bigg).
    \end{align} Since the each basis vector $v_{r}$ are multiplied by some $\mu_r{(u)}\in\mathbb{C}[[u^{-1}]]$ under the action of $t_{11}(u)$, the action of $\big(t_{11}(u)\big)^{-1}$ is equal to the multiplication by $\mu_r{(u)}^{-1}$, and the action of $t_{22}(u)$ can be easily computed by formula \eqref{formula_for_t_22_by_another}. 
\end{proof}

\subsection{Construction of basis}
We recall that according to Proposition \ref{independence_of_order} elements $\t{i}{j}{r}$ act on $V$ by zero unless $r\leq k$. For this reason, let us introduce the new formal series
\begin{align*}
    T_{ij}(u) = u^{k}t_{ij}(u),
\end{align*} which acts on $V$ by some polynomial of degree less than or equal $k$ if $i=j$ and strictly less than $k$ otherwise. 

Obviously, the same relations as for $t_{ij}(u)$ hold for the new series:
\begin{align}
\label{T_iju_relations}
    (u-v)[T_{ij}(u), T_{kl}(v)] = T_{kj}(u)T_{il}(v) - T_{kj}(v)T_{il}(u).
\end{align}

Assume that $\zeta$ is the highest vector of $V$. For any set of indices
\begin{align*}
    r=(r_1,\ldots,r_k)
\end{align*} with $r_i\in\{0,\ldots,\alpha_i-\beta_i\}$ (such indices are said to be allowable) we define the next vectors from $V$:
\begin{align}
\label{v_r_definition}
    v_r = \prod_{i=1}^{k} \big(T_{21}(-\alpha_i+r_i-1) \cdot \ldots\cdot T_{21}(-\alpha_i)\big) \cdot \zeta,
\end{align} where the corresponding product is empty if $r_i$ equals $0$. We observe that $T_{21}(u)$ and $T_{21}(v)$ commute under relations \eqref{T_iju_relations} thus the order of the factors in \eqref{v_r_definition} does not matter.

The next construction is adapted from \cite[Th. 3.3.8]{Molev}.

\begin{theorem}
\label{yangian_2_GF_realization}
    Suppose that 
    \begin{align*}
        V = \Lira{1}\otimes\ldots\otimes \Lira{k}
    \end{align*} is a finite-dimensional irreducible representation and each pair of the corresponding strings $\mathit{S}(\alpha_i, \beta_i)$ does not intersect. Then the vectors $v_r$, defined in \eqref{v_r_definition}, for all allowable indices are a basis of $V$ and formulas of action are
    \begin{gather}
        T_{11}(u)\cdot v_r = (u+\alpha_1-r_1) \cdot\ldots\cdot (u+\alpha_k-r_k) v_r,\label{T_11_formula}\\
        T_{21}(-\alpha_i+r_i)\cdot v_r = v_{r+\delta_i},\label{T_21_formula}\\
        T_{12}(-\alpha_i+r_i) \cdot v_r= -\prod_{j=1}^{k}(\alpha_j-\alpha_i+r_i)(\beta_j-\alpha_i+r_i-1) \cdot v_{r-\delta_i},\label{T_12_formula}
    \end{gather} where $r\pm\delta_i$ differs from $r$ only in the $i$-th place by adding $\pm1$ and if $r$ is not a set of allowable indices, then the corresponding $v_r$ equals $0$.
\end{theorem}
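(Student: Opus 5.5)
We follow the scheme of \cite[Th. 3.3.8]{Molev} and work throughout with the polynomial series $T_{ij}(u)=u^k t_{ij}(u)$ and relations \eqref{T_iju_relations}. The first step is the action on the highest vector. Since $V$ is a tensor product of evaluation modules and $\zeta=\zeta_1\otimes\ldots\otimes\zeta_k$, formula \eqref{comultiplication} gives $T_{12}(u)\zeta=0$ and
\begin{align*}
T_{11}(u)\zeta=(u+\alpha_1)\cdots(u+\alpha_k)\zeta,\qquad T_{22}(u)\zeta=(u+\beta_1)\cdots(u+\beta_k)\zeta .
\end{align*}
Write $\lambda_1(u),\lambda_2(u)$ for these polynomials.

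Next we derive the commutation relations to be used. Taking $(i,j,k,l)=(1,1,2,1)$ and $(1,2,2,1)$ in \eqref{T_iju_relations} gives
\begin{align*}
(u-v)T_{11}(u)T_{21}(v)&=(u-v-1)T_{21}(v)T_{11}(u)+T_{21}(u)T_{11}(v),\\
(u-v)[T_{12}(u),T_{21}(v)]&=T_{22}(u)T_{11}(v)-T_{22}(v)T_{11}(u).
\end{align*}
We prove \eqref{T_11_formula} by induction on $|r|$, applying the first relation to $v_{r}=T_{21}(-\alpha_i+r_i-1)v_{r-\delta_i}$. The key point is that the unwanted term $T_{21}(u)T_{11}(v)v_{r-\delta_i}$ vanishes at $v=-\alpha_i+r_i-1$, because $T_{11}(v)$ acts on $v_{r-\delta_i}$ by a polynomial having the root $v=-\alpha_i+r_i-1$. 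What remains is the factor $(u-v-1)/(u-v)$, which shifts the $i$-th root from $u+\alpha_i-(r_i-1)$ to $u+\alpha_i-r_i$. Care is needed to interpret the division by $u-v$; we do this as an identity of polynomials in $u$ after multiplying out. Formula \eqref{T_21_formula} holds by definition whenever $r+\delta_i$ is allowable, and also $T_{21}(v)$ commutes with itself.

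For \eqref{T_12_formula} we argue by induction, using the second relation with $u=-\alpha_i+r_i$ and $v_r=T_{21}(-\alpha_i+r_i-1)v_{r-\delta_i}$. Because $T_{11}(-\alpha_i+r_i-1)$ kills $v_{r-\delta_i}$, the commutator term reduces to a scalar times $v_{r-\delta_i}$. We then need $T_{22}(u)v_{r-\delta_i}$, which we obtain from \eqref{formula_for_t_22_by_another}, the fact that $\qdet{u}$ acts by $\lambda_1(u)\lambda_2(u-1)$ (Proposition \ref{action_of_quantum_determinant}), and the inductive hypothesis. Alternatively, we evaluate $T_{22}(u)T_{11}(v)-T_{22}(v)T_{11}(u)$ at the root $v$ directly: only the term $-T_{22}(v)T_{11}(u)$ survives, together with $T_{12}(u)T_{21}(v)$ moved across, and $T_{22}(v)$ at a root of $T_{11}$ acts by $\qdet{v+1}/T_{11}(v+1)$-type scalars. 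Matching the result with $-\prod_j(\alpha_j-\alpha_i+r_i)(\beta_j-\alpha_i+r_i-1)$ is a routine interpolation computation.

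Finally we show the $v_r$ form a basis and handle the boundary cases. By \eqref{T_11_formula}, distinct allowable $r$ give distinct eigenvalue polynomials, since the roots $\alpha_i-r_i$ lie in the disjoint strings. Hence any nonzero $v_r$ are linearly independent. Nonvanishing follows from \eqref{T_12_formula}: lowering $v_r$ back to $\zeta$ multiplies by products of the coefficients, and these are nonzero when $r_i-\delta$ stays allowable, precisely by the non-intersection of the strings. The count $\prod(\alpha_i-\beta_i+1)=\dim V$ then gives a basis.

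It remains to show $v_{r+\delta_i}=0$ when $r_i=\alpha_i-\beta_i$. The coefficient of \eqref{T_12_formula} vanishes at $r_i+1$ via the factor $j=i$, $\beta_i-\alpha_i+r_i$, so this vector is a $T_{12}$-singular eigenvector of $T_{11}$ with a weight not occurring in the irreducible $V$, hence zero; irreducibility gives this. \textbf{The main obstacle} is this nonvanishing/vanishing bookkeeping together with the exact coefficient in \eqref{T_12_formula}, where the non-intersection hypothesis must be used precisely.
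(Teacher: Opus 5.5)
\textbf{There is a gap in your proof of \eqref{T_12_formula}.} Write $w=v_{r-\delta_i}$, $v_0=-\alpha_i+r_i-1$, $u_0=v_0+1$, and let $\mu_w(u)$ be the $T_{11}$-eigenvalue of $w$. Your splitting gives
\[
T_{12}(u_0)v_r=T_{21}(v_0)T_{12}(u_0)w-\mu_w(u_0)\,T_{22}(v_0)w .
\]
You then assert that the second term is a multiple of $w$, i.e.\ that $T_{22}$ at a root of the $T_{11}$-eigenvalue acts on $w$ by a scalar. This is false.

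Take $k=2$ and $V=L(a_1,a_1-1)\otimes L(a_2,a_2-1)$ with $a_1-a_2\notin\{0,\pm1\}$. Let $x_m$ be the highest vector of the $m$-th factor and $y_m=E_{21}x_m$. By \eqref{comultiplication}, $T_{22}(u)=E_{21}\otimes E_{12}+(u+E_{22})\otimes(u+E_{22})$. The vector $v_{(0,1)}=(a_1-a_2-1)\,x_1\otimes y_2$ has $T_{11}$-eigenvalue $(u+a_1)(u+a_2-1)$, so $-a_1$ is a root, yet
\[
T_{22}(-a_1)\,(x_1\otimes y_2)=y_1\otimes x_2+(a_1-a_2)\,x_1\otimes y_2 .
\]
This is exactly your step for $r=(1,1)$, $i=1$.

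Independently, the first term needs $T_{12}(u_0)w$ at a point that is not one of the nodes $-\alpha_j+(r-\delta_i)_j$ of $w$. It is available only by Lagrange interpolation, as a combination of all $v_{r-\delta_i-\delta_j}$. You would then have to show that after applying $T_{21}(v_0)$ every off-diagonal piece cancels against the off-diagonal part of $T_{22}(v_0)w$. That is the unperformed ``routine interpolation computation''.

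Your alternative has the same defect. The form $\mathrm{qdet}\,T(u)=t_{22}(u-1)t_{11}(u)-t_{21}(u-1)t_{12}(u)$ gives
\[
\mu_w(v+1)\,T_{22}(v)w=v^k(v+1)^k\,\mathrm{qdet}\,T(v+1)\,w+T_{21}(v)T_{12}(v+1)w ,
\]
and you dropped the last term.

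\textbf{The missing idea} is that this dropped term is exactly what cancels your first term. Substituting $v=v_0$ gives $T_{12}(u_0)v_r=-u_0^kv_0^k\,\mathrm{qdet}\,T(u_0)\,w$, with no induction at all. The paper does this in one line: apply $u^k(u-1)^k\,\mathrm{qdet}\,T(u)=T_{22}(u)T_{11}(u-1)-T_{12}(u)T_{21}(u-1)$ to $w$ at $u=u_0$. The first term dies because $T_{11}(v_0)w=0$, the second term is $-T_{12}(u_0)v_r$, and Proposition \ref{action_of_quantum_determinant} gives the scalar $\prod_j(u_0+\alpha_j)(u_0+\beta_j-1)$, which is \eqref{T_12_formula}.

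You also never treat the case $r_i=0$, i.e.\ $T_{12}(-\alpha_i)v_r=0$. The paper proves it by induction, writing $v_r=T_{21}(-\alpha_j+r_j-1)v_{r-\delta_j}$ with $j\neq i$. In that case your commutator idea does work, because both $T_{11}$-factors in $[T_{12}(-\alpha_i),T_{21}(-\alpha_j+r_j-1)]$ annihilate $v_{r-\delta_j}$.

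The rest of your proposal is sound and matches the paper: the $T_{11}$ induction, the independence-plus-dimension count, and the eigenvalue argument for $v_{r+\delta_i}=0$ when $r_i=\alpha_i-\beta_i$. The last of these is more explicit than the paper's, though the ``$T_{12}$-singular'' remark is unjustified and unnecessary.

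One caveat shared with the paper: the nonvanishing of the coefficients needs nonempty strings. For $L(2,0)\otimes L(1,1)$ one has $T_{21}(u)=(u+1)E_{21}\otimes 1$, so $v_{(2,0)}=0$ although $(2,0)$ is allowable.
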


\begin{proof}
    If $r+\delta_i$ is a set of allowable indices, then formula \eqref{T_21_formula} for $T_{21}(u)$ follows from Definition \ref{v_r_definition} of the vectors $v_r$.

    The proof of formula \eqref{T_11_formula} for $T_{11}(u)$ is by induction on sum $r_1+\ldots+r_k$. The base case is
    \begin{align*}
        r_1+\ldots+r_k=0,
    \end{align*} hence each of $r_i$ equals to zero. The vector $v_{0,\ldots,0}$ is the highest vector $\zeta$ of $V$ by definition. Then formula \eqref{T_11_formula}
    \begin{align*}
        T_{11}(u) \cdot \zeta = u^k t_{11}(u)\cdot\zeta = (u+\alpha_1)\cdot\ldots\cdot (u+\alpha_k)\zeta
    \end{align*} is a consequence of Proposition \ref{action_of_tij}.

    In the induction step, we can certainly assume that $r_j\not=0$, then by relation \eqref{T_iju_relations} and the definition of $v_r$
    \begin{multline*}
        T_{11}(u)\cdot v_{r} = T_{11}(u) T_{21}(-\alpha_j+r_j-1) \cdot v_{r-\delta_j} = \\
        = T_{21}(-\alpha_j+r_j-1)T_{11}(u) \cdot v_{r-\delta_j} + [T_{11}(u), T_{21}(-\alpha_j+r_j-1)]\cdot v_{r-\delta_j} =\\
        = (u+\alpha_1-r_1) \cdot \ldots \cdot (u+\alpha_j-r_j+1) \cdot \ldots \cdot (u+\alpha_k-r_k) v_r + \\
        +\frac{1}{u+\alpha_j-r_j+1}\big(T_{21}(u)T_{11}(-\alpha_j+r_j-1) v_{r-\delta_j} - T_{21}(-\alpha_j+r_j-1)T_{11}(u)\cdot v_{r-\delta_j}\big)=\\
        = (u+\alpha_1-r_1) \cdot \ldots \cdot (u+\alpha_j-r_j+1) \cdot \ldots \cdot (u+\alpha_k-r_k) v_r -\\
        - (u+\alpha_1-r_1) \cdot \ldots \cdot \land^{j} \cdot \ldots \cdot (u+\alpha_k-r_k) v_r =\\
        = (u+\alpha_1-r_1) \cdot\ldots\cdot (u+\alpha_k-r_k)v_r,
    \end{multline*} where $\land^{j}$ means a skip the $j$-th factor, $T_{11}(-\alpha_j+r_j-1)$ is equal to zero by the induction conclusion, and \eqref{T_11_formula} is proved.

    The proof of formula \eqref{T_12_formula} falls into two steps: $r_i=0$ and $r_i\not=0$. Suppose that $r_i=0$, then the assertion of the formula is
    \begin{align*}
        T_{12}(-\alpha_i)\cdot v_{r}=0.
    \end{align*} The argument is by induction on sum $r_1+\ldots+r_k$ as well. According to the above remark, the base case follows from Proposition \ref{action_of_tij} because $v_{0,\ldots,0}$ is the highest vector:
    \begin{align*}
        T_{12}(u)\cdot \zeta = u^kt_{12}(u)\cdot\zeta=0.
    \end{align*} In the induction step, we can certainly assume that $r_j\not=0$, then by relation \eqref{T_iju_relations} and the definition of $v_r$
    \begin{multline*}
        T_{12}(-\alpha_i)\cdot v_r = T_{12}(-\alpha_i) T_{21}(-\alpha_j+r_j-1)\cdot v_{r-\delta_j} = \\
        = T_{21}(-\alpha_j+r_j-1)T_{12}(-\alpha_i)\cdot v_{r-\delta_j} + [T_{12}(-\alpha_i), T_{21}(-\alpha_j+r_j-1)] =\\
        = \frac{1}{\alpha_j-\alpha_i-r_j+1}\big(T_{22}(-\alpha_i)T_{11}(-\alpha_j+r_j-1)v_{r-\delta_j} -\\- T_{22}(-\alpha_j+r_j-1)T_{11}(-\alpha_i)\cdot v_{r-\delta_j}\big) = 0.
    \end{multline*} where $T_{12}(-\alpha_i)\cdot v_{r-\delta_j}$ is equal to zero by the induction conclusion, and both $T_{11}(...)$ equal zero by already proved formula \eqref{T_11_formula}. We shall emphasize that the denominator $(\alpha_j-\alpha_i-r_j+1)$ is not equal to zero since the corresponding strings do not intersect.

    We now turn to the case $r_i\not=0$. Let us recall the quantum determinant in a case of $\yang{2}$, according to one variant of Definition \ref{quantum_determinant}:
    \begin{align*}
        u^k(u-1)^k \qdet{u} = T_{22}(u)T_{11}(u-1)-T_{12}(u)T_{21}(u-1).
    \end{align*} We know the formula of the quantum determinant action from Proposition \ref{action_of_quantum_determinant}:
    \begin{align*}
        u^k(u-1)^k \qdet{u}\big|_{V} = \prod_{j=1}^{k} (u+\alpha_j)(u+\beta_j-1).
    \end{align*} Now applying that formula in a case $u=-\alpha_i+r_i$ for the vector $v_{r-\delta_i}$ and using already proved formula \eqref{T_11_formula} for $T_{11}(u)$ we deduce that
    \begin{multline*}
        u^k(u-1)^k \qdet{u}\big|_{u=-\alpha_i+r_i} \cdot v_{r-\delta_i} =\\= -T_{12}(-\alpha_i+r_i)T_{21}(-\alpha_i+r_i-1)\cdot v_{r-\delta_i}
        = -T_{12}(-\alpha_i+r_i)\cdot v_r,
    \end{multline*} what establishes formula \eqref{T_12_formula} for $T_{12}(u)$
    \begin{align*}
        T_{12}(-\alpha_i+r_i) \cdot v_r= -\prod_{j=1}^{k}(\alpha_j-\alpha_i+r_i)(\beta_j-\alpha_i+r_i-1) \cdot v_{r-\delta_i}.
    \end{align*}

    It remains to prove that the set of the vectors $\{v_r\}$ with allowable indices is a basis of $V$. It is easy to see that by acting $T_{12}(u)$ several times with the right $u$ we can make the highest vector $\zeta$ of any $v_r$ with non-zero scalar since each pair of the corresponding strings does not intersect. Thus the vectors $\{v_r\}$ are all non-zero.

    In that case, the vectors $\{v_r\}$ are linearly independent because they are eigenvectors of $T_{11}(u)$ with different eigenvalues. Finally, an amount of $\{v_r\}$ which is equal to an amount of allowable indices is
    \begin{align*}
        \prod_{j=1}^{k}(\alpha_j-\beta_j+1),
    \end{align*} which is the dimension of $V$. Therefore, $\{v_r\}$ is a basis, formula \eqref{T_21_formula} for $T_{21}(u)$ holds in a case of unallowable indices, and the proof is complete.
\end{proof}

On account of Proposition \ref{gz_eigen_basis_proposition}, we immediately conclude the next corollaries.
\begin{corollary}
    The basis $v_r$ is an eigenbasis for $\gz{2}$.
\end{corollary}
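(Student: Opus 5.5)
The corollary follows by combining Theorem \ref{yangian_2_GF_realization} with the first part of Proposition \ref{gz_eigen_basis_proposition}. That proposition says a basis of a finite-dimensional irreducible highest weight $\yang{2}$-module is an eigenbasis for $\gz{2}$ exactly when it is an eigenbasis for $t_{11}(u)$. So it suffices to show that every $v_r$ is an eigenvector of each coefficient $\t{1}{1}{m}$. First I check that the hypotheses of the proposition hold for $V$. By Theorem \ref{yangian_2_GF_realization}, $V$ is finite-dimensional and irreducible. It is a highest weight module by Theorem \ref{theorem_of_highest_weight_repr}, with highest vector $\zeta = v_{0,\ldots,0}$. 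The same theorem shows that the $v_r$ over all allowable $r$ form a basis.

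Next, formula \eqref{T_11_formula} gives
\begin{align*}
    T_{11}(u)\cdot v_r = \prod_{j=1}^{k}(u+\alpha_j-r_j)\, v_r .
\end{align*}
Since $T_{11}(u)=u^k t_{11}(u)$, dividing by $u^k$ in $V[[u^{-1}]]$ gives
\begin{align*}
    t_{11}(u)\cdot v_r=\prod_{j=1}^{k}\bigl(1+(\alpha_j-r_j)u^{-1}\bigr)v_r .
\end{align*}
Comparing coefficients of $u^{-m}$, each $\t{1}{1}{m}$ acts on $v_r$ by a scalar, namely the $m$-th elementary symmetric polynomial in the numbers $\alpha_j-r_j$. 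Hence $\{v_r\}$ is an eigenbasis for $t_{11}(u)$, and by Proposition \ref{gz_eigen_basis_proposition} it is an eigenbasis for $\gz{2}$.

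To make the argument self-contained, I would also recall the reason behind the proposition. The algebra $\gz{2}$ is generated by the coefficients of $t_{11}(u)$, which is the center of $\yang{1}$, together with the coefficients of $\qdet{u}$, which is the center of $\yang{2}$ by Theorem \ref{theorem_of_center_yangian}. The coefficients of $\qdet{u}$ act by scalars on $V$ by Proposition \ref{action_of_quantum_determinant}, so any $t_{11}(u)$-eigenbasis is automatically diagonal for all of $\gz{2}$.

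The only point needing care is the passage from $T_{11}(u)$ to $t_{11}(u)$. This is legitimate because only finitely many coefficients act nontrivially, by Proposition \ref{independence_of_order}, and multiplication by $u^{-k}$ is invertible on formal series. I therefore expect no real obstacle: the corollary is an immediate consequence of the two earlier results.
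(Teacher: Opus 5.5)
Your proof is correct and follows the paper's own argument. The paper also obtains the corollary at once from formula \eqref{T_11_formula} of Theorem \ref{yangian_2_GF_realization}, which makes each $v_r$ an eigenvector of $t_{11}(u)$, combined with the first part of Proposition \ref{gz_eigen_basis_proposition}. You simply spell out the division by $u^k$ and the reason the quantum determinant acts by scalars.
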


\begin{corollary}
\label{corollary_that_determine_uniquelt_yang2}
    Formulas \eqref{T_11_formula}, \eqref{T_21_formula} and \eqref{T_12_formula} proved above effectively determine the action of $\yang{2}$.
\end{corollary}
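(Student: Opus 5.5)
The plan is to reduce the statement to Proposition \ref{gz_eigen_basis_proposition}(2). By that proposition it suffices to know, in the basis $\{v_r\}$, the full action of the series $t_{11}(u)$, $t_{12}(u)$, $t_{21}(u)$; the action of $t_{22}(u)$ then follows from \eqref{formula_for_t_22_by_another}. Since $T_{ij}(u)=u^k t_{ij}(u)$, it is equivalent to know the operator-valued polynomials $T_{11}(u)$, $T_{12}(u)$, $T_{21}(u)$ on $V$. Formula \eqref{T_11_formula} already gives $T_{11}(u)$ for every $u$, so only the off-diagonal series need attention.

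The key observation is that, by Proposition \ref{independence_of_order}, $T_{12}(u)$ and $T_{21}(u)$ act on $V$ as polynomials in $u$ of degree at most $k-1$ with coefficients in $\mathrm{End}(V)$. Such a polynomial is determined by its values at $k$ pairwise distinct points, by Lagrange interpolation. I would therefore fix a basis vector $v_r$ and evaluate at the $k$ points $u_i=-\alpha_i+r_i$, $i=1,\ldots,k$. Formulas \eqref{T_21_formula} and \eqref{T_12_formula} give $T_{21}(u_i)v_r$ and $T_{12}(u_i)v_r$ explicitly, including the boundary cases where $r\pm\delta_i$ is not allowable and the value is $0$. Interpolating, we get
\begin{align*}
T_{21}(u)\cdot v_r=\sum_{i=1}^{k}\prod_{j\neq i}\frac{u-u_j}{u_i-u_j}\,v_{r+\delta_i},
\end{align*}
and analogously for $T_{12}(u)$, with the coefficients from \eqref{T_12_formula} and the vectors $v_{r-\delta_i}$.

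The main obstacle is checking that the points $u_i$ are pairwise distinct for every allowable $r$. We have $\alpha_i-r_i\in\{\beta_i,\ldots,\alpha_i\}$, which is the string $\mathcal{S}(\alpha_i,\beta_i)$ together with its right endpoint $\alpha_i$. Suppose $\alpha_i-r_i=\alpha_j-r_j$ for some $i\neq j$. Since the strings are disjoint, the common value can only be an endpoint $\alpha_i$ or $\alpha_j$ lying in, or adjacent to, the other string. I would rule out each such coincidence. First, $\alpha_i=\alpha_j$ forces the strings to intersect unless one of them is empty. Second, $\alpha_i=\beta_j$ makes $\mathcal{S}(\alpha_i,\beta_i)\cup\mathcal{S}(\alpha_j,\beta_j)$ a string, which contradicts the general position required by Theorem \ref{theorem_string_conditions} for the irreducible module $V$. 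Empty strings, where $\alpha_i=\beta_i$, contribute trivial tensor factors and can be discarded beforehand, or handled by the automorphism \eqref{multiplication_automorphism}.

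Once distinctness is established, the interpolation above determines $T_{12}(u)$ and $T_{21}(u)$ completely. Together with \eqref{T_11_formula}, Proposition \ref{gz_eigen_basis_proposition} then yields the whole action of $\yang{2}$.
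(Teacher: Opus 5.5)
Your proposal is correct and follows the paper's proof. Both arguments Lagrange-interpolate the polynomials $T_{21}(u)v_r$ and $T_{12}(u)v_r$, of degree at most $k-1$, through the $k$ points $-\alpha_i+r_i$, and then invoke Proposition \ref{gz_eigen_basis_proposition} to recover $t_{22}(u)$.

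Your distinctness check is more careful than the paper's, which attributes distinctness to disjointness alone. In fact the adjacent case $\alpha_i=\beta_j$ needs the general-position (irreducibility) hypothesis, and empty strings do have to be set aside. The one case you do not list, $\beta_j<\alpha_i<\alpha_j$, is immediate, since then $\alpha_i-1$ lies in both strings.
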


\begin{proof}
    As a consequence of Proposition \ref{independence_of_order}, the formal series $T_{21}(u)$ and $T_{12}(u)$ act in $V$ by polynomials of degree less than $k$. Fixing some basis vector $v_r$, we know the action both of $T_{21}(u)$ and $T_{12}(u)$ in $k$ points $-\alpha_i+r_i$, which are all different since each pair of the corresponding strings does not intersect.

    We thus can write a Lagrange interpolating polynomial for defining the action of $T_{21}(u)$ and $T_{12}(u)$:
    \begin{gather}
    \label{T_21_whole_formula_with_u}
        T_{21}(u) \cdot v_r = \sum_{j=1}^{k}\frac{(u+\alpha_1-r_1)\cdot\ldots\land^{j}\ldots\cdot(u+\alpha_k-r_k)}{(\alpha_1-\alpha_j+r_j-r_1)\cdot\ldots\land^{j}\ldots\cdot(\alpha_k-\alpha_j+r_j-r_k)}v_{r+\delta_j},
    \end{gather}
    \begin{multline}
    \label{T_12_whole_formula_with_u}
        T_{12}(u) \cdot v_r = -\sum_{j=1}^{k}\prod_{m=1}^{k}(\alpha_m-\alpha_j+r_j)(\beta_m-\alpha_j+r_j-1) \cdot \\
        \cdot \frac{(u+\alpha_1-r_1)\cdot\ldots\land^{j}\ldots\cdot(u+\alpha_k-r_k)}{(\alpha_1-\alpha_j+r_j-r_1)\cdot\ldots\land^{j}\ldots\cdot(\alpha_k-\alpha_j+r_j-r_k)}v_{r-\delta_j},
    \end{multline} with the notation $\land^{j}$ means a skip the $j$-th factor.

    As a consequence of Proposition \ref{gz_eigen_basis_proposition}, the action of $\yang{2}$ is effectively described.
\end{proof}

\begin{corollary}
\label{sl_n_drinfeld_formulas_yang2}
    The formulas in the Drinfeld realization are
    \begin{align}
    \label{f_1_whole_formula_with_u}
        f_1(u) \cdot v_r = \sum_{j=1}^{k}\frac{(u+\alpha_j+r_j-1)^{-1}}{(\alpha_1-\alpha_j+r_j-r_1)\cdot\ldots\land^{j}\ldots\cdot(\alpha_k-\alpha_j+r_j-r_k)}v_{r+\delta_j},
    \end{align}
    \begin{multline}
    \label{e_1_whole_formula_with_u}
        e_1(u) \cdot v_r = -\sum_{j=1}^{k}\prod_{m=1}^{k}(\alpha_m-\alpha_j+r_j)(\beta_m-\alpha_j+r_j-1) \cdot \\
        \cdot \frac{(u+\alpha_j-r_j+1)^{-1}}{(\alpha_1-\alpha_j+r_j-r_1)\cdot\ldots\land^{j}\ldots\cdot(\alpha_k-\alpha_j+r_j-r_k)}v_{r-\delta_j},
    \end{multline}with the same notation $\land^{j}$ means a skip the $j$-th factor. Moreover, these formulas are invariant under the multiplication automorphism. In other words, the formulas only depend on the Drinfeld polynomial $P(u)$ of the representation.
\end{corollary}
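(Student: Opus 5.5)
The plan is to express the Drinfeld generators of $\yang{2}$ through $t_{ij}(u)$ and then feed in the interpolation formulas \eqref{T_21_whole_formula_with_u} and \eqref{T_12_whole_formula_with_u}. For $n=2$, $i=1$ the definitions give
\begin{align*}
f_1(u) = t_{21}(u)\,t_{11}(u)^{-1}, \qquad e_1(u) = t_{11}(u)^{-1}\,t_{12}(u).
\end{align*}
The factor $u^k$ cancels in both products, so we may replace $t_{ij}$ by $T_{ij}$: $f_1(u)=T_{21}(u)T_{11}(u)^{-1}$ and $e_1(u)=T_{11}(u)^{-1}T_{12}(u)$.

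For $f_1$ I act first with $T_{11}(u)^{-1}$, which by \eqref{T_11_formula} multiplies $v_r$ by $\prod_m (u+\alpha_m-r_m)^{-1}$. Then I apply \eqref{T_21_whole_formula_with_u}. In the $j$-th summand the numerator $\prod_{m\neq j}(u+\alpha_m-r_m)$ cancels all factors except $(u+\alpha_j-r_j)^{-1}$, leaving the $j$-independent denominator.

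For $e_1$ the order is reversed. I first apply \eqref{T_12_whole_formula_with_u}, giving vectors $v_{r-\delta_j}$, and then $T_{11}(u)^{-1}$ on $v_{r-\delta_j}$. That vector has eigenvalue $\prod_{m\neq j}(u+\alpha_m-r_m)\cdot(u+\alpha_j-r_j+1)$, so after cancellation the factor $(u+\alpha_j-r_j+1)^{-1}$ remains, exactly as in \eqref{e_1_whole_formula_with_u}.

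Checking $f_1$ carefully is where I expect the only obstacle, namely bookkeeping of which eigenvalue is used. With $f_1=T_{21}T_{11}^{-1}$ the inverse acts on $v_r$, which yields $(u+\alpha_j-r_j)^{-1}$. This does not literally match the printed $(u+\alpha_j+r_j-1)^{-1}$. I would recheck the convention, possibly the ordering $f_1(u)=t_{11}(u)^{-1}t_{21}(u)$ coming from the $h_i$ normalization. With that ordering the inverse acts on $v_{r+\delta_j}$ and gives $(u+\alpha_j-r_j-1)^{-1}$. I expect the stated exponent to be a sign misprint for one of these, and I would verify it on a single $\Lir{\alpha}{\beta}$ with $k=1$ via the relation between $e_1$, $f_1$ and $h_1'h_2$ in Theorem \ref{new_realization}.

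For invariance under the multiplication automorphism $T(u)\mapsto g(u)T(u)$, both $f_1$ and $e_1$ are ratios in which $g(u)$ appears once in the numerator and once in the denominator, so they are unchanged. By Theorem \ref{classification_theorem_yang2} the representation is determined up to this automorphism by $P(u)$. The remaining data, namely the numbers $\alpha_j-r_j$ and the products $\prod_m(\alpha_m-\alpha_j+r_j)(\beta_m-\alpha_j+r_j-1)$, are read off from the roots of $P(-u)$ up to the chosen labelling. This shows that the formulas depend only on the Drinfeld polynomial.
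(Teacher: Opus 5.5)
Your argument is the paper's own. The paper writes $f_1(u)=t_{21}(u)t_{11}(u)^{-1}$ and $e_1(u)=t_{11}(u)^{-1}t_{12}(u)$, calls the computation straightforward, and notes that a scalar series cancels; your cancellation against the $T_{11}$-eigenvalues on $v_r$ (for $f_1$) and on $v_{r-\delta_j}$ (for $e_1$) is exactly that computation, and your $e_1$ formula agrees with the statement.

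The mismatch you found for $f_1$ is a misprint in the statement, not a convention issue. The paper's definition of $f_i(u)$ puts the inverse minor on the right, so the correct factor is $(u+\alpha_j-r_j)^{-1}$. For $k=1$, $f_1(u)=E_{21}(u+E_{11})^{-1}$ sends $E_{21}^{r}\zeta$ to $(u+\alpha-r)^{-1}E_{21}^{r+1}\zeta$, so you need not hedge toward the other ordering. The error is harmless later, since only the $u^{-1}$ coefficient of $f_i(u)$ is used to obtain $E_{i+1,i}$.
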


\begin{proof}
    By definition,
    \begin{align*}
        f_1(u)=t_{21}(u)\big(t_{11}(u)\big)^{-1},~e_1(u) = \big(t_{11}(u)\big)^{-1}t_{12}(u).
    \end{align*} Now the proof of the formulas is straightforward and it is evident that $f_1(u)$ and $e_1(u)$ are invariant under the automorphism of multiplication on any invertible formal series.
\end{proof}

\begin{remark}
\label{uniqeuly_defined_T11}
    The basis vector $v_r$ is uniquely defined up to scalar by the action of $T_{11}(u)$.
\end{remark}

\begin{proof}
    It is clear since each pair of the corresponding strings does not intersect. But we should emphasize that if
    \begin{align*}
        T_{11}(u) \cdot v_r = (u+a_1)\cdot\ldots\cdot(u+a_k)v_{r},
    \end{align*} then the index $r_j$ is defined by
    \begin{align*}
        \alpha_j - r_j = a_j
    \end{align*} maybe, after renumbering of the roots.
\end{proof}

%% file: 5_GZ_Basis_for_gln.tex
\subsection{The evaluation representation}
Let us fix a finite-dimensional irreducible representation $V_{\lambda}$ of $\gl{n}$ of the highest weight
\begin{align*}
    \lambda=\big(\lambda_1,\ldots,\lambda_n\big) \in \mathbb{Z}_{\geq 0}^n,
\end{align*} such that
\begin{align*}
    \lambda_1 \geq \ldots \geq \lambda_n \geq 0.
\end{align*}

As we discussed in \ref{evaluation_representation}, $V_{\lambda}$ can be endowed with a structure of irreducible $\yang{n}$-module by the evaluation homomorphism $\pi$, which is called the evaluation module, and it is agreed with $\uni{n}$-structure, in particular,
\begin{align*}
    V_{\lambda}\big|_{\yang{n-1}} = \bigoplus_{\mu} V_{\mu}
\end{align*}with $V_{\mu}$ being the evaluation $\yang{n-1}$-module with the highest weight $\mu$ such that
\begin{align*}
    \lambda_1 \geq \mu_1 \geq \lambda_2 \geq \ldots \lambda_{n-1} \geq \mu_{n-1} \geq \lambda_n \geq 0.
\end{align*}

We  consider $\yang{n}$-module $V_{\lambda}$ as a representation of subalgebras $\yang{2}^{(i)}$, see Proposition \ref{definition_of_yang2i}, because they contain the $\gl{n}$-generators $E_{i+1,i}$, $E_{i,i+1}$ and $E_{ii}$. We show that $V_{\lambda}$ can be decomposed into a direct sum of certain irreducible $\yang{2}^{(i)}$-modules, and applying formulas from \hyperref[Section4]{Section 4} we get the explicit realization. 

\subsection{Centralizer construction and complete reducibility}
Consider the following map:
\begin{align*}
    \psi: \yang{2} \to \yang{n}: t_{ij}(u) \to \left(t_{1\ldots n-2}^{1\ldots n-2}(u)\right)^{-1} \cdot t_{1\ldots n-2, n-2+j}^{1\ldots n-2, n-2+i}(u).
\end{align*} It turns out that $\psi$ is an injective algebra homomorphism, proof can be find in \cite[1 ch., 11 par.]{Molev}. Moreover, after composition with the evaluation homomorphism to $\uni{n}$ the image under $\pi\circ \psi$ is in the centralizer $\uni{n}^{\gl{n-2}}$ (that is a subalgebra consists of all elements of $\uni{n}$ commutes with the embedded $\uni{n-2}$).

We know that the $\gl{n}$-module $V_{\lambda}$ is completely reducible as $\gl{n-1}$-module without multiplicity (see \hyperref[Section2]{Section 2}). Applying that argument again, we get
\begin{align*}
    V_{\lambda}\big|_{\gl{n-2}} = \bigoplus_{\mu \subset \lambda} K_{\mu}\otimes V_{\mu},
\end{align*} where $V_{\mu}$ is an irreducible $\gl{n-2}$-module of the highest weight $\mu$ and $K_{\mu}$ is a multiplicity space which can be interpreted as 
\begin{align*}
    K_{\mu}\cong \mathrm{Hom}_{\gl{n-2}}(V_{\mu}, V_{\lambda}).
\end{align*} The space $K_{\mu}$ is endowed with a natural structure of $\uni{n}^{\gl{n-2}}$-module, and it is well-known fact that it is an irreducible representation of $\uni{n}^{\gl{n-2}}$ (see \cite[9 par.]{Dixmier}).

\begin{proposition}[{\cite[Pr. 8.5.1]{Molev}}]
    The multiplicity space $K_{\mu}$ can be endowed with a structure of an irreducible representation of $\yang{2}$ by using the homomorphism $\pi\circ \psi$.
\end{proposition}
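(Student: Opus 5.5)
To prove that $K_{\mu}$ becomes an irreducible $\yang{2}$-module through $\pi\circ\psi$, I would split the argument into two parts: first that $K_{\mu}$ is a $\yang{2}$-module at all, then that it is irreducible.

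\textbf{Step 1: the module structure.} By the statement preceding the proposition, $\psi$ is an algebra homomorphism (\cite[1 ch., 11 par.]{Molev}). Moreover, $\pi\circ\psi$ takes values in the centralizer $\uni{n}^{\gl{n-2}}$. The centralizer acts on $V_{\lambda}$ by $\gl{n-2}$-endomorphisms. Hence it acts on
\begin{align*}
    K_{\mu}\cong\mathrm{Hom}_{\gl{n-2}}(V_{\mu},V_{\lambda})
\end{align*}
by post-composition, and this action is compatible with the isotypic decomposition $V_{\lambda}\big|_{\gl{n-2}}=\bigoplus_{\mu} K_{\mu}\otimes V_{\mu}$. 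Pulling this action back along the algebra homomorphism $\pi\circ\psi:\yang{2}\to\uni{n}^{\gl{n-2}}$ gives $K_{\mu}$ the structure of a $\yang{2}$-module.

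\textbf{Step 2: irreducibility.} The difficulty is that $\pi\circ\psi$ is not obviously surjective onto $\uni{n}^{\gl{n-2}}$. I would avoid needing surjectivity by proving the weaker statement that the image of $\pi\circ\psi$, together with the center $Z\uni{n-2}$ and the center $Z\uni{n}$, generates the centralizer. This is the content of the centralizer construction in \cite[Ch. 8]{Molev}; there it follows from the PBW-type description of $\uni{n}^{\gl{n-2}}$ and the fact that the quantum minors $t^{1\ldots n-2,a}_{1\ldots n-2,b}$ map to generators of the associated graded algebra.

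Granting this generation statement, irreducibility follows. Both centers act on $K_{\mu}$ by scalars:
\begin{itemize}
    \item $Z\uni{n}$ acts on all of $V_{\lambda}$ by scalars, by Schur's lemma;
    \item $Z\uni{n-2}$ acts on the isotypic component $K_{\mu}\otimes V_{\mu}$ by scalars.
\end{itemize}
Therefore every $\yang{2}$-invariant subspace of $K_{\mu}$ is $\uni{n}^{\gl{n-2}}$-invariant. Since $K_{\mu}$ is irreducible over $\uni{n}^{\gl{n-2}}$ (\cite[9 par.]{Dixmier}), it is irreducible over $\yang{2}$.

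\textbf{Alternative for Step 2 and the main obstacle.} The main obstacle is the generation statement. If I could not verify it, I would argue irreducibility directly, as follows.
\begin{enumerate}
    \item Compute the $\gl{n-2}$-singular vectors in the $\mu$-isotypic component.
    \item Show that $\psi(t_{11}(u))$ acts on them diagonally in the Gelfand--Tsetlin basis for the chain $\gl{n-2}\subset\gl{n-1}\subset\gl{n}$, with simple spectrum.
    \item Show that $\psi(t_{21}(u))$ and $\psi(t_{12}(u))$ move between these eigenvectors with nonzero coefficients, which forces any invariant subspace to be everything.
\end{enumerate}
The simplicity of the spectrum in this alternative comes from Theorem \ref{gelfand_tsetlin_theorem}.
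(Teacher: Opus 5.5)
Your proposal follows essentially the same route as the paper. Both arguments use two ingredients: irreducibility of $K_{\mu}$ over the centralizer $\uni{n}^{\gl{n-2}}$ (Dixmier), and the fact, imported from Molev's centralizer construction, that this centralizer is generated by $\pi\circ\psi(\yang{2})$ together with central elements acting on $K_{\mu}$ by scalars. Adding $Z\uni{n}$ to the generating set is redundant but harmless, and your Step 1 simply makes explicit the module structure on $K_{\mu}$ that the paper takes for granted.
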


\begin{proof}
    The proof can be found in \cite[9 ch., 3-5 par.]{Molev}. It is based on the observation that the center $Z\uni{n-2}=\pi({Z\yang{n-2}})$, which is obviously in $\uni{n}^{\gl{n-2}}$, acts by scalar in $K_{\mu}$.

    Therefore, if $\uni{n}^{\gl{n-2}}$ is generated by the image of $\pi\circ \psi(\yang{2})$ and the center $Z\uni{n-2}$, then $K_{\mu}$ is the irreducible representation of $\yang{2}$.
\end{proof}

One can notice that the elements of $\yang{2}^{(i)}$ seem to be shifted in comparison with the image of $\psi$. Thus, we need the next proposition.

\begin{proposition}
\label{shifted_subalgebra}
    Suppose that $\mathcal{K}$ is an associative $\mathbb{C}$-algebra and $a(u)\in\mathcal{K}[[u^{-1}]]$. Then the subalgebra of $\mathcal{K}$, generated by the coefficients of $a(u)$, is equal (as a set) to the subalgebra, generated by the coefficients of $a(u-c)$, where is $c\in\mathbb{C}$ is an arbitrary complex number.
\end{proposition}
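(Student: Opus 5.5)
The plan is a direct double inclusion. Write $a(u)=\sum_{k\geq 0} a_k u^{-k}$ and let $\mathcal{A}$ and $\mathcal{A}_c$ denote the subalgebras of $\mathcal{K}$ generated by the coefficients of $a(u)$ and of $a(u-c)$ respectively. By the symmetry $c\mapsto -c$ (since $a(u)=a\big((u-c)+c\big)$), it will suffice to show $\mathcal{A}_c\subseteq\mathcal{A}$; applying the same argument to the series $b(u)=a(u-c)$ with shift $-c$ then gives the reverse inclusion.

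First I will make the shifted series explicit. In $\mathcal{K}[[u^{-1}]]$ the expression $(u-c)^{-k}$ means $u^{-k}(1-cu^{-1})^{-k}$, and the binomial series gives
\begin{align*}
(u-c)^{-k}=\sum_{m\geq 0}\binom{k+m-1}{m}c^m u^{-k-m}.
\end{align*}
Substituting, the coefficient of $u^{-N}$ in $a(u-c)$ is
\begin{align*}
b_N=\sum_{k=0}^{N}\binom{N-1}{N-k}c^{N-k}a_k,
\end{align*}
with the convention that $b_0=a_0$.

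The key observation is that each $b_N$ is a finite $\mathbb{C}$-linear combination of $a_0,\ldots,a_N$. This holds because only the terms with $k\leq N$ contribute to $u^{-N}$, so no infinite sums occur in $\mathcal{K}$. Hence every generator of $\mathcal{A}_c$ lies in $\mathcal{A}$, and therefore $\mathcal{A}_c\subseteq\mathcal{A}$.

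One could alternatively note that the transition matrix is unitriangular, since the coefficient of $a_N$ in $b_N$ is $1$. This means it can be inverted recursively, expressing $a_N$ as a combination of $b_0,\ldots,b_N$, which also yields the reverse inclusion directly.

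There is no real obstacle here. The only point needing care is to justify that the substitution $u\mapsto u-c$ is well defined in $\mathcal{K}[[u^{-1}]]$, that is, that each coefficient of the result is a finite sum, which the triangularity above ensures.
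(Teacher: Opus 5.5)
Your proof is correct and follows the same route as the paper: expand $(u-c)^{-k}$ by the binomial series and observe that the coefficient of $u^{-N}$ in $a(u-c)$ is a finite linear combination of $a_0,\ldots,a_N$ with coefficient $1$ on $a_N$. You are slightly more complete than the paper, which states only the forward inclusion explicitly; your symmetry $c\mapsto -c$ (or the unitriangular inversion) supplies the reverse inclusion, and your exponents $c^{N-k}$ are correct where the paper's displayed powers of $c$ are off by one.
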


\begin{proof}
    Let $a(u)=\sum\limits_{k=0}^{+\infty}a_ku^{-k}$. It is simple matter to compute that
    \begin{align*}
        (u-c)^{-k} = \frac{u^{-k}}{(1-cu^{-1})^{k}}=u^{-k}\sum_{n=0}^{+\infty}\binom{k+n-1}{k-1}c^nu^{-n}.
    \end{align*} Hence, the $n$-th term of $a(u-c)$ depends only on the first $n$-th summands and has the form
    \begin{align*}
        \mathcal{C}_{1}c^n\cdot a_1+\mathcal{C}_{2}c^{n-1}\cdot a_2 + \ldots + \mathcal{C}_{n-1}c\cdot a_{n-1} + a_n, ~\mathcal{C}_{i}\in\mathbb{Z}
    \end{align*}that is, the $n$-th term is the linear combination of the previous ones. So, the proof is complete.
\end{proof}

From now on, we can consider the shifted $\psi$:
\begin{align*}
    \psi: \yang{2} \to \yang{n}: t_{ij}(u) \to \left(t_{1\ldots n-2}^{1\ldots n-2}(u-n+2)\right)^{-1} \cdot t_{1\ldots n-2, n-2+j}^{1\ldots n-2, n-2+i}(u-n+2),
\end{align*} and by Proposition \ref{shifted_subalgebra}, all properties of the image of $\psi$ are the same.

Finally, we want to deduce that $K_{\mu}$ is an irreducible representation of $\yang{2}^{(n-1)}$ as well. Firstly, the image of $\yang{2}^{(n-1)}$ under the evaluation homomorphism is in the centralizer $\uni{n}^{\gl{n-2}}$ (see \cite[Cor. 1.7.2]{Molev}).

Moreover, the generators $h_{n-1}(u)$, $f_{n-1}(u)$ and $e_{n-1}(u)$ of $\yang{2}^{(n-1)}$ are in $\psi(\yang{2})$ after multiplication on some elements of $Z\yang{n-2}$:
\begin{gather*}
    h_{n-1}(u) = t_{1\ldots n-2}^{1\ldots n-2}(u+n-2)\cdot \psi(t_{11}(u)) \cdot\left(t_{1\ldots n-2}^{1\ldots n-2}(u+n-2)\right)^{-1},\\
    f_{n-1}(u) = t_{1\ldots n-2}^{1\ldots n-2}(u+n-2)\cdot\psi(t_{21}(u))\cdot\left(t_{1\ldots n-2}^{1\ldots n-2}(u+n-2)\right)^{-1},\\
    e_{n-1}(u) = \psi(t_{12}(u)).
\end{gather*} The image of the quantum minor $t_{1\ldots n-2}^{1\ldots n-2}(u+n-2)$ is in the center $Z\uni{n-2}$, hence, it acts by scalar and does not affect on reducibility.

Through the proof of Proposition \ref{gz_eigen_basis_proposition} we discussed that $\yang{2}$ is generated by the coefficients of the formal series
\begin{align*}
    t_{11}(u),\, t_{12}(u),\, t_{21}(u), \,\qdet{u}.
\end{align*} Since $K_{\mu}$ is an irreducible representation of $\yang{2}$, the coefficients of $\qdet{u}$ act by scalar. Therefore, $K_{\mu}$ is an irreducible representation of the subalgebra, generated by the coefficients of
\begin{align*}
    t_{11}(u), t_{12}(u), t_{21}(u).
\end{align*}

As we concluded, the elements $\psi(t_{11}(u))$, $\psi(t_{21}(u))$ and $\psi(t_{12}(u))$ are in the $\yang{2}^{(n-1)}$ up to multiplication to some elements of the center $Z\yang{n-2}$, which act by scalar in $K_{\mu}$. Thus, we have the next proposition.

\begin{proposition}
    $K_{\mu}$ is the irreducible $\yang{2}^{(n-1)}$-module. Consequently, $V_{\lambda}$ considered as $\yang{2}^{(n-1)}$-module is completely reducible. The same property holds for every $\yang{2}^{(i)}$.
\end{proposition}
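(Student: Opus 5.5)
The plan has three parts. First, assemble the facts stated just before the proposition into an irreducibility statement for $K_{\mu}$. Second, deduce complete reducibility from the isotypic decomposition over $\gl{n-2}$. Third, transfer the result to an arbitrary index $i$ by restricting to a smaller Yangian.

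\textbf{Irreducibility of $K_{\mu}$.} By the preceding proposition (\cite[Pr. 8.5.1]{Molev}), $K_{\mu}$ is an irreducible module over $\pi\circ\psi(\yang{2})$. The coefficients of $\qdet{u}$ act on it by scalars, so $K_{\mu}$ is already irreducible over the subalgebra generated by the coefficients of $\pi\psi(t_{11}(u))$, $\pi\psi(t_{12}(u))$ and $\pi\psi(t_{21}(u))$.

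Next we use the displayed identities expressing $h_{n-1}(u)$, $f_{n-1}(u)$ and $e_{n-1}(u)$ through these series conjugated by $t_{1\ldots n-2}^{1\ldots n-2}(u+n-2)$. The image of this quantum minor lies in $Z\uni{n-2}$, so it acts on $K_{\mu}$ by an invertible scalar series. Hence on $K_{\mu}$ the operators $\pi(h_{n-1}(u))$, $\pi(f_{n-1}(u))$ and $\pi(e_{n-1}(u))$ coincide with the corresponding $\pi\psi$-images, up to a central scalar factor that cancels in the conjugation.

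By Proposition \ref{shifted_subalgebra}, the shift of argument does not change the algebra generated by the coefficients. Therefore the image of $\yang{2}^{(n-1)}$ in $\mathrm{End}(K_{\mu})$ contains the image of the subalgebra above. Moreover, $\yang{2}^{(n-1)}$ acts on $K_{\mu}$ at all, since its evaluation image lies in $\uni{n}^{\gl{n-2}}$ by \cite[Cor. 1.7.2]{Molev}. It follows that every $\yang{2}^{(n-1)}$-invariant subspace of $K_{\mu}$ is invariant for an algebra over which $K_{\mu}$ is irreducible, so $K_{\mu}$ is irreducible over $\yang{2}^{(n-1)}$.

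\textbf{Complete reducibility for $i=n-1$.} Write $V_{\lambda}=\bigoplus_{\mu}K_{\mu}\otimes V_{\mu}$. Because $\pi(\yang{2}^{(n-1)})\subset\uni{n}^{\gl{n-2}}$, this algebra acts on each $K_{\mu}\otimes V_{\mu}$ through the first factor. Choosing a basis of $V_{\mu}$, each summand $K_{\mu}\otimes V_{\mu}$ becomes a direct sum of $\dim V_{\mu}$ copies of the irreducible module $K_{\mu}$, and $V_{\lambda}$ is the direct sum of these.

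\textbf{General $i$.} For $i<n-1$, observe that $h_i(u)$, $h_{i+1}(u)$, $e_i(u)$ and $f_i(u)$ involve only quantum minors with indices at most $i+1$. Thus $\yang{2}^{(i)}\subset\yang{i+1}$ under the chain \eqref{yangian_embeddings}, and there it is exactly the subalgebra $\yang{2}^{(i)}$ of $\yang{i+1}$ with top index.

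By the compatibility of evaluation with the embeddings, $V_{\lambda}\big|_{\yang{i+1}}$ is a direct sum of evaluation modules $V_{\nu}$ of $\gl{i+1}$. Applying the case already proved, with $n$ replaced by $i+1$, to each $V_{\nu}$ shows that each is completely reducible over $\yang{2}^{(i)}$, and hence so is $V_{\lambda}$.

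\textbf{Main obstacle.} The delicate step is the first one. One must justify that the series relating $h_{n-1}$, $f_{n-1}$ and $e_{n-1}$ to $\psi(t_{ij})$ really yield the same generated algebra on $K_{\mu}$, once the central factors are replaced by their scalar values. I would handle this coefficientwise: the central factor is an invertible series in $\mathbb{C}[[u^{-1}]]$ with constant term $1$, so multiplying by it or its inverse is a triangular invertible change of coefficients, exactly as in Proposition \ref{shifted_subalgebra}.
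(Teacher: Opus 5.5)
Your proposal follows the paper's argument essentially step by step. The irreducibility of $K_{\mu}$ comes from its $\pi\circ\psi$-structure, the scalar action of $\qdet{u}$ and of the central quantum minor, and Proposition~\ref{shifted_subalgebra}; complete reducibility comes from $V_{\lambda}\big|_{\gl{n-2}}=\bigoplus_{\mu}K_{\mu}\otimes V_{\mu}$, which you spell out where the paper only says ``consequently''; and the general case is obtained by restricting to $\yang{i+1}$, where $\yang{2}^{(i)}$ is the top subalgebra.

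You do inherit one small inaccuracy from the paper's displayed identities. Unwinding the definitions, $h_{n-1}(u)$ is a shift of $\psi(t_{11}(u))$. However, $e_{n-1}(u)$ is the correspondingly shifted $\psi(t_{12}(u))$ multiplied on the left by $h_{n-1}(u)^{-1}$, and $f_{n-1}(u)$ is the shifted, conjugated $\psi(t_{21}(u))$ multiplied on the right by $h_{n-1}(u)^{-1}$. This is harmless: the coefficients of $h_{n-1}(u)^{\pm1}$ lie in $\yang{2}^{(n-1)}$, so both families still generate the same algebra on $K_{\mu}$, and your argument goes through.
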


\begin{proof}
    What is left is to explain why $V_{\lambda}$ is completely reducible for $\yang{2}^{(i)}$ with $i<n-1$. For simplicity, let us consider $\yang{2}^{(n-2)}$.

    As we discussed,
    \begin{align*}
        V_{\lambda}\big|_{\yang{n-1}} = \bigoplus_{\mu \subset \lambda}V_{\mu},
    \end{align*} where $V_{\mu}$ is the irreducible $\yang{n-1}$-module of the highest weight $\mu$ which satisfies certain properties.

    We observe that $\yang{2}^{(n-2)}\subset\yang{n-1}$, hence, it remains to prove that $V_{\mu}$ is completely reducible as $\yang{2}^{(n-1)}$-module, but it has already been proved since the last arguments do not depend on the dimension $n$.
    The same proof obviously works for every $\yang{2}^{(i)}$.
\end{proof}

We do not discuss this argument in detail because we will directly prove complete reducibility of $V_{\lambda}$ as $\yang{2}$-module later.

\subsection{Connection to Verhik-Okounkov approach} We should mention that this paper was inspired by Vershik and Okounkov's article \cite{VershikOkounkov} which is dedicated to the similar problem of representations of the symmetric group $S_n$.

In \cite{VershikOkounkov}, a representation $W$ of $\mathbb{C}[S_n]$ is implicitly viewed as a representation of the degenerate affine Hecke algebra $H(n)$ (see \cite[Remark 3.3]{VershikOkounkov}), whose analogue in this paper is $\yang{n}$. The representation is restricted to the degenerate affine Hecke subalgebra $H(2)=H^{(i)}(2)$, generated by two \textrm{YJM}-elements $X_i$ and $X_{i+1}$ and the Coxeter transposition $s_i$, whose analogues are $\yang{2}^{(i)}$. The subalgebras $\yang{2}^{(i)}$ are satisfied similar to $H(2)$ properties within the context of the search the Gelfand-Tsetlin realization. Namely $\yang{n}$ is generated by $\yang{n-1}$ and $\yang{2}^{(n-1)}$ and $\yang{2}^{(n-1)}$ commutes with $\yang{n-2}$. In terms of the branching graph, the $\yang{2}^{(n-1)}$ acts only in the $n-1$-th level.

In the preface, the authors of \cite{VershikOkounkov} declare that one of their goal is to justify the appearance of the Young diagrams in the theory of representations of $S_n$. Actually, the classification of finite-dimensional representations of $S_n$ is re-proved in \cite{VershikOkounkov}.

We lack of that problem since the semistandard Young tableaux appear in the Gelfand-Tsetlin realization at the beginning. We do not know whether the classification of irreducible finite-dimensional $\gl{n}$-modules can be naturally proved only through the Yangian theory. However, the similar techniques are justified since the realization of $\gl{n}$-modules is more complicated.

\subsection{The Gelfand-Tsetlin realization of \texorpdfstring{$\gl{n}$}{gl n}}
Recall that on account of \hyperref[Section2]{Section 2}, we fixed the Gelfand-Tsetlin basis $\{v_{\Lambda}\}$ indexed by the Gelfand-Tsetlin tableaux
\begin{center}\begin{tabular}{c }
         $\lambda_{n1}$ ~  $\lambda_{n2}$ ~ \dots ~ $\lambda_{n,n-1}$ ~$\lambda_{nn}$ \\
         $\lambda_{n-1,1}$ ~ \dots ~ $\lambda_{n-1,n-1}$ \\
         $\ddots$ ~ $\dots$ ~\reflectbox{$\ddots$} \\
         $\lambda_{21}$ ~ $\lambda_{22}$ \\
         $\lambda_{11}$
\end{tabular}
\end{center}

\begin{definition}
    Suppose that $W$ is an arbitrary $\yang{2}$-module. We call a non-zero vector $w$ the almost-highest vector if the relation
    \begin{align*}
        \frac{h_1(u)}{h_2(u)}\cdot w = \frac{P(u+1)}{P(u)}w
    \end{align*} holds for some polynomial $P(u)\in\mathbb{C}[u]$.
\end{definition}

This definition is motivated since highest vectors of irreducible representations are almost-highest. The search of all almost-highest vectors of a $\yang{2}$-module helps to find highest vectors and, hence, to decompose the module as a direct sum of the irreducible representations. Now we want to find almost-highest vectors of $V_{\lambda}$.

\begin{proposition}
\label{GF_basis_eqigenvectors_for_hi}
    The Gelfand-Tsetlin basis vectors are eigenvectors of the Drinfeld series $h_i(u)$ and
    \begin{align}
    \label{h_i_eigenvector}
        h_i(u) \cdot v_{\Lambda} = \frac{\prod_{j=1}^{i}(u+\lambda_{ij}+i-j)}{u\prod_{j=1}^{i-1}(u+\lambda_{i-1,j}+i-j)}v_{\Lambda}.
    \end{align}
\end{proposition}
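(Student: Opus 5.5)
The approach is to compute the eigenvalue of each quantum minor $t^{1\ldots i}_{1\ldots i}(u)$ on $v_{\Lambda}$ and then take ratios, since by definition
\begin{align*}
    h_i(u) = t^{1\ldots i}_{1\ldots i}(u+i-1)\cdot\big(t^{1\ldots i-1}_{1\ldots i-1}(u+i-1)\big)^{-1}.
\end{align*}
The coefficients of $t^{1\ldots i}_{1\ldots i}(u)$ lie in the center of $\yang{i}$ by Theorem \ref{theorem_of_center_yangian}, and $\yang{i}$ is embedded via the chain \eqref{yangian_embeddings}. These embeddings are compatible with $\pi$, so the image of this center lies in $Z\uni{i}$. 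By the definition of the Gelfand-Tsetlin basis, $v_{\Lambda}$ lies in an irreducible $\gl{i}$-submodule $V_{\lambda^{(i)}}$ of highest weight $(\lambda_{i1},\ldots,\lambda_{ii})$. As a $\yang{i}$-module this is the evaluation module of Definition \ref{evaluation_representation}, with highest weight $(1+\lambda_{i1}u^{-1},\ldots,1+\lambda_{ii}u^{-1})$. Hence $t^{1\ldots i}_{1\ldots i}(u)$ acts on $v_{\Lambda}$ by a scalar series, and this already shows that $v_{\Lambda}$ is an eigenvector of every $h_i(u)$.

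To compute the scalar, apply Proposition \ref{action_of_quantum_determinant} to $\yang{i}$ acting on $V_{\lambda^{(i)}}$:
\begin{align*}
    t^{1\ldots i}_{1\ldots i}(u)\cdot v_{\Lambda} = \prod_{j=1}^{i}\big(1+\lambda_{ij}(u-j+1)^{-1}\big)v_{\Lambda} = \prod_{j=1}^{i}\frac{u-j+1+\lambda_{ij}}{u-j+1}\,v_{\Lambda}.
\end{align*}
Substituting $u\mapsto u+i-1$ gives
\begin{align*}
    t^{1\ldots i}_{1\ldots i}(u+i-1)\cdot v_{\Lambda}=\prod_{j=1}^{i}\frac{u+\lambda_{ij}+i-j}{u+i-j}\,v_{\Lambda}.
\end{align*}
The same formula with $i$ replaced by $i-1$, evaluated at the argument $u+i-1=(u+1)+(i-1)-1$, gives
\begin{align*}
    t^{1\ldots i-1}_{1\ldots i-1}(u+i-1)\cdot v_{\Lambda}=\prod_{j=1}^{i-1}\frac{u+\lambda_{i-1,j}+i-j}{u+i-j}\,v_{\Lambda}.
\end{align*}

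Both series have constant term $1$ and are therefore invertible. Since they act by scalars on $v_{\Lambda}$, the inverse acts by the reciprocal scalar. Taking the ratio, the denominators $\prod_{j=1}^{i-1}(u+i-j)$ cancel. What remains is the single factor $u+i-i=u$ from $j=i$ in the first product, which gives exactly formula \eqref{h_i_eigenvector}. For $i=1$ the empty minor is $1$ and the formula reads $(u+\lambda_{11})/u$, which is consistent.

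The main point needing care is the first step: identifying the restriction of the $\yang{n}$-evaluation module to $\yang{i}$ with a direct sum of $\yang{i}$-evaluation modules. This uses the commutative diagram of embeddings. One must also check that the shifts in Proposition \ref{action_of_quantum_determinant} (arguments $u,u-1,\ldots,u-i+1$) match the convention of the quantum minor definition, which they do for the first formula in Definition \ref{quantum_determinant}. The rest is bookkeeping of the shifts.
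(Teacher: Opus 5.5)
Your proposal is correct and follows essentially the same route as the paper: both treat $t^{1\ldots i}_{1\ldots i}(u+i-1)$ and $t^{1\ldots i-1}_{1\ldots i-1}(u+i-1)$ as shifted quantum determinants of $\yang{i}$ and $\yang{i-1}$. Both evaluate them on $v_{\Lambda}$ via Proposition \ref{action_of_quantum_determinant}, applied to the evaluation modules containing $v_{\Lambda}$, and then take the ratio. Your version makes the shift bookkeeping and the cancellation leaving the single factor $u$ more explicit, and it correctly includes the inverse that is missing from the paper's displayed formula for $h_i(u)$.
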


\begin{proof}
    We begin by recalling the formula of $h_i(u)$:
    \begin{align*}
        h_i(u) = t_{1\ldots i}^{1\ldots i}(u+i-1)\big(t_{1\ldots i-1}^{1\ldots i-1}(u+i-1)\big).
    \end{align*} The quantum minors $t_{1\ldots i}^{1\ldots i}(u+i-1)$ and $t_{1\ldots i-1}^{1\ldots i-1}(u+i-1)$ are the shifted quantum determinants of $\yang{i}$ and $\yang{i-1}$ respectively. As $v_{\Lambda}$ is in some irreducible subrepresentations of $\gl{i}$ and $\gl{i-1}$, which are uniquely defined by its Gelfand-Tsetlin tableau, $v_{\lambda}$ is in the corresponding evaluation $\yang{i}$ and $\yang{i-1}$-modules of the highest weight.

    By Proposition \ref{action_of_quantum_determinant}, we obtain
    \begin{gather*}
        t_{1\ldots i}^{1\ldots i}(u+i-1) \cdot v_{\Lambda} = (1+\lambda_{i1}(u+i-1)^{-1}) \cdot\ldots\cdot (1+\lambda_{ii}u^{-1})v_{\Lambda},\\
        t_{1\ldots i-1}^{1\ldots i-1}(u+i-1) \cdot v_{\Lambda} = (1+\lambda_{i-1,1}(u+i-1)^{-1}) \cdot \ldots\cdot (1+\lambda_{i-1,i-1}(u+1)^{-1})v_{\Lambda}.
    \end{gather*} Since $v_{\Lambda}$ is an eigenvector of $t_{1\ldots i-1}^{1\ldots i-1}(u+i-1)$, the action of the $\big(t_{1\ldots i-1}^{1\ldots i-1}(u+i-1)\big)^{-1}$ is multiplying by the invertible eigenvalue. Now formula \eqref{h_i_eigenvector} follows immediately.
\end{proof}

The next theorem provides a description of the almost-highest vectors among the Gelfand-Tsetlin basis in $V_{\lambda}$

\begin{theorem}
\label{criterion_of_almost-highest_vector}
    A Gelfand-Tsetlin basis vector $v_{\Lambda}$ is almost-highest if and only if its Gelfand-Tsetlin tableau satisfies the condition
    \begin{align*}
        \lambda_{i+1,1}=\lambda_{i1}~\text{and}~\lambda_{ij} = \min{\{\lambda_{i+1,j}, \lambda_{i-1,j-1}\}}.
    \end{align*} for all $j\in\{2,\ldots,i\}$.
\end{theorem}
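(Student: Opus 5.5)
The plan is to compute the ratio $h_i(u)/h_{i+1}(u)$ on $v_{\Lambda}$ explicitly and ask when it has the form $P(u+1)/P(u)$. Here "almost-highest" is understood for the subalgebra $\yang{2}^{(i)}$, so $h_1,h_2$ become $h_i,h_{i+1}$. By Proposition \ref{GF_basis_eqigenvectors_for_hi}, $v_{\Lambda}$ is an eigenvector of both series. Introduce the shifted coordinates $l_{mj}=\lambda_{mj}-j+1$. Then the eigenvalue of $h_i(u)/h_{i+1}(u)$ is the rational function
\begin{align*}
    R(u)=\frac{\prod_{j=1}^{i}(u+l_{ij}+i-1)^2}{\prod_{j=1}^{i-1}(u+l_{i-1,j}+i-1)\,\prod_{j=1}^{i+1}(u+l_{i+1,j}+i)}.
\end{align*}
We substitute $x=u+i-1$ and write $a_j=l_{i+1,j}+1$, $b_j=l_{ij}$, $c_j=l_{i-1,j}$. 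The question then becomes when
\begin{align*}
    \frac{\prod_j (x+b_j)^2}{\prod_j (x+c_j)\prod_j(x+a_j)}=\frac{P(x+1)}{P(x)}.
\end{align*}

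The key observation is that $P(x+1)/P(x)$ has a very rigid form. If $P(x)=\prod_s\prod_{t\in S_s}(x+t)$ is decomposed into maximal strings of roots, then $P(x+1)/P(x)=\prod_s (x+\max S_s+1)/(x+\min S_s)$. After cancellation, such a ratio has simple zeros and simple poles only. The zeros and poles can be paired so that each zero exceeds its partner pole by a positive integer, and the resulting strings are disjoint and non-adjacent. Conversely, any ratio of monic polynomials of equal degree in which every zero is matched with a pole lying a positive integer below it (as a multiset, after cancellation) is of this form. One sees this by taking $P$ to be the product of the strings in between. Since all our numbers are integers, the degree balance $2i=(i-1)+(i+1)$ already holds. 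So the condition is about cancellation: the doubled factors $(x+b_j)^2$ must cancel enough with the denominator that no zero of multiplicity $2$ survives.

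Next we use the interlacing. The strict inequalities $a_1>b_1\ge c_1> b_2\ge\dots$, in the form $l_{i+1,j}+1> l_{ij}\ge l_{i+1,j+1}+1$ and $l_{ij}> l_{i-1,j}\ge l_{i,j+1}$, mean that each $b_j$ can cancel with at most one denominator factor. It cancels with $a_{j+1}$ when $\lambda_{ij}=\lambda_{i+1,j+1}$, and with $c_{j-1}$ when $\lambda_{i-1,j-1}=\lambda_{ij}$ (in the shifted normalization; the indices in the statement should be matched to this). Every $b_j$ has to lose one copy, since a squared factor can never appear in $P(x+1)/P(x)$. That gives $i$ required cancellations against $2i$ denominator factors. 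Among the denominators, $a_1$ can never cancel, because it is strictly larger than every $b$. The condition $\lambda_{i+1,1}=\lambda_{i1}$ arises from the requirement that the remaining numerator/pole pairing works out; in the original coordinates $a_1$ should cancel against $b_1$ after the shift by one. I would recheck this bookkeeping carefully. Imposing that each $b_j$ for $j\ge2$ equals one of its two neighbours yields $\lambda_{ij}\in\{\lambda_{i+1,j},\lambda_{i-1,j-1}\}$, and interlacing then forces it to be the minimum.

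The main obstacle is the converse direction, together with the exact matching of shifts. It is not enough that each $b_j$ cancels once; after cancellation the surviving $i$ zeros and $i$ poles must admit a pairing with zero $>$ pole. I would verify this by listing the survivors in decreasing order. If $b_j$ cancels with its left neighbour, its surviving copy pairs with the right neighbour pole, and vice versa. Interlacing then guarantees the positive-integer differences, and a Hall-type greedy matching argument on sorted lists is sufficient. For the necessity direction, I would argue by contradiction: if some $b_j$ cancels with neither neighbour, a double zero survives, and this is impossible because the zeros of $P(x+1)/P(x)$ are simple in reduced form.
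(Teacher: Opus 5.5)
\textbf{Your ratio is miscomputed, and the rest of the argument rests on it.} When you apply \eqref{h_i_eigenvector} to $h_{i+1}(u)$, the $i$-th row enters the denominator as $\prod_{j=1}^{i}(u+\lambda_{ij}+i+1-j)$. That is shifted by one from its appearance $\prod_{j=1}^{i}(u+\lambda_{ij}+i-j)$ in the numerator of $h_i(u)$. So in your variables the numerator of $h_i(u)/h_{i+1}(u)$ is $\prod_{j=1}^{i}(x+b_j)(x+b_j+1)$, not $\prod_{j=1}^{i}(x+b_j)^2$.

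The case $i=1$ already shows the error. On the highest vector of $L(\lambda_{21},\lambda_{22})$ the ratio must be $(u+\lambda_{21})/(u+\lambda_{22})$. Your $R(u)$ gives $(u+\lambda_{21})^2/\bigl((u+\lambda_{21}+1)(u+\lambda_{22})\bigr)$ instead.

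This is why your bookkeeping cannot reach the statement:
\begin{itemize}
\item the cancellation $b_j=c_{j-1}$ would need $\lambda_{ij}=\lambda_{i-1,j-1}+1$, which interlacing forbids;
\item $b_j=a_{j+1}$ means $\lambda_{ij}=\lambda_{i+1,j+1}$, a lowest-vector-type condition;
\item $a_1$ never cancels, so $\lambda_{i+1,1}=\lambda_{i1}$ cannot come out.
\end{itemize}
With the correct numerator, the relevant coincidences are $a_1=b_1+1$, $a_j=b_j+1$ and $c_{j-1}=b_j+1$. These are exactly $\lambda_{i+1,1}=\lambda_{i1}$, $\lambda_{ij}=\lambda_{i+1,j}$ and $\lambda_{ij}=\lambda_{i-1,j-1}$.

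\textbf{Your necessity lemma is also false.} $P(x+1)/P(x)$ need not have simple zeros and poles in reduced form: $P(x)=x^2$ gives $(x+1)^2/x^2$. So ``a double zero survives, contradiction'' proves nothing. Moreover, the correct ratio genuinely can have repeated factors: $x+b_j=x+b_{j+1}+1$ when $\lambda_{ij}=\lambda_{i,j+1}$. Hence ``each $b_j$ must lose a copy'' is the wrong mechanism.

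\textbf{The right criterion is a counting one.} Let $N=\{b_j,b_j+1\}_{j\le i}$ and $D=\{a_j\}_{j\le i+1}\cup\{c_j\}_{j\le i-1}$, viewed as integer multisets. Put $F(s)=\#\{n\in N: n\ge s\}-\#\{d\in D: d\ge s\}$. Then the ratio equals $P(x+1)/P(x)$ for a polynomial $P$ if and only if $F(s)\ge 0$ for all $s\in\mathbb{Z}$.
\begin{itemize}
\item Necessity: write $P(x)=\prod_{t\in T}(x+t)$. Then $N\uplus T=D\uplus(T+1)$. This forces $T\subset\mathbb{Z}$ and makes $F(s)$ the multiplicity of $s-1$ in $T$.
\item Sufficiency: your greedy pairing of the sorted lists works.
\end{itemize}

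Interlacing gives $a_j\ge b_j+1$, $b_j\ge a_{j+1}$ and $b_j\ge c_j\ge b_{j+1}+1$. Write $[\cdot]$ for $1$ if the condition holds and $0$ otherwise. Then:
\begin{itemize}
\item $F(s)=-[a_1\ge s]$ for $s>b_1+1$;
\item $F(s)=1-[a_j\ge s]-[c_{j-1}\ge s]$ for $b_j+1<s\le b_{j-1}$;
\item $F(s)\ge 0$ for every other integer $s$.
\end{itemize}
Hence $F\ge 0$ if and only if $a_1=b_1+1$ and $\min\{a_j,c_{j-1}\}=b_j+1$ for $2\le j\le i$. This is precisely the theorem.

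With these two repairs your route is a valid alternative to the paper's. The paper instead splits the ratio into the fractions $\frac{x+b_j}{x+c_j}$, $\frac{x+b_j+1}{x+a_{j+1}}$ and $\frac{x+b_i}{x+a_{i+1}}$, plus one inverted fraction $\frac{x+b_i+1}{x+a_1}$. It then checks, from the bottom up, when the string $\{b_i+1,\dots,a_1-1\}$ is covered by the strings of the other fractions.
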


\begin{proof}
    Let us first recall that we identify $h_1(u)$ and $h_2(u)$ with $h_i(u)$ and $h_{i+1}(u)$ respectively.
    
    Applying formula \eqref{h_i_eigenvector}, we have
    \begin{align}
    \label{bad_formula_for_hi_hi1_action}
        \frac{h_i(u)}{h_{i+1}(u)} v_{\Lambda} = \frac{\prod_{j=1}^{i}(u+\lambda_{ij}+i-j)}{\prod_{j=1}^{i-1}(u+\lambda_{i-1,j}+i-j)} \cdot \frac{\prod_{j=1}^{i}(u+\lambda_{ij}+i+1-j)}{\prod_{j=1}^{i+1}(u+\lambda_{i+1,j}+i+1-j)}v_{\Lambda}.
    \end{align}

    It is a simple matter that the fraction
    \begin{align*}
        \frac{u+a}{u+b}\text{,}~ a-b\in\mathbb{Z}_{\geq 0}
    \end{align*}is equivalent to $\frac{P(u+1)}{P(u)}$, where $P(u)$ is equal to
    \begin{align*}
        (u+b)(u+b+1)\cdot\ldots\cdot(u+a-1).
    \end{align*}

    By the definition of Gelfand-Tsetlin tableau, the next relations hold:
    \begin{align}
    \label{lambda_ij_relations}
        \lambda_{ij} \geq \lambda_{i-1,j} ~\text{and}~\lambda_{ij} \geq \lambda_{i+1,j+1}.
    \end{align} After reordering the fractions of equation \eqref{bad_formula_for_hi_hi1_action}, we have
    \begin{multline}
    \label{good_formula_hi_hi1_action}
        \frac{h_i(u)}{h_{i+1}(u)} v_{\Lambda} = \frac{\prod_{j=1}^{i-1}(u+\lambda_{ij}+i-j)}{\prod_{j=1}^{i-1}(u+\lambda_{i-1,j}+i-j)} \cdot\\
        \cdot \frac{\prod_{j=1}^{i-1}(u+\lambda_{ij}+i+1-j)}{\prod_{j=1}^{i-1}(u+\lambda_{i+1,j+1}+i-j)} \cdot \frac{u+\lambda_{ii}}{u+\lambda_{i+1,i+1}} \cdot \frac{u+\lambda_{ii}+1}{u+\lambda_{i+1,1}+i}v_{\Lambda}.
    \end{multline} We conclude from the formula above and relations \eqref{lambda_ij_relations} that every fraction in the product except the last one is equivalent to $\frac{P(u+1)}{P(u)}$ for some polynomial $P(u)$ and so is their product. Our next claim is to show when the relation with fixed polynomial $P(u)$ and $a-b\in\mathbb{Z}_{\geq 0}$
    \begin{align*}
        \frac{P(u+1)}{P(u)} \cdot \frac{u+b}{u+a} = \frac{Q(u+1)}{Q(u)}
    \end{align*} holds for some polynomial $Q(u)\in\mathbb{C}[u]$.

    With the notation $F(u)=\frac{Q(u)}{P(u)}$, we have
    \begin{align*}
        \frac{u+b}{u+a} = \frac{F(u+1)}{F(u)}.
    \end{align*} There is the unique $F(u)$ up to scalar satisfying the equality above since periodic rational functions do not exist. In fact,
    \begin{align*}
        F(u) = \frac{1}{(u+b)(u+b+1)\cdot\ldots\cdot(u+a-1)}.
    \end{align*} From this we deduce that
    \begin{align*}
        Q(u) = \frac{P(u)}{(u+b)(u+b+1)\cdot\ldots\cdot(u+a-1)},
    \end{align*} and if $Q(u)$ is a polynomial, then $P(u)$ is divisible for the
    \begin{align*}
        (u+b)(u+b+1)\cdot\ldots\cdot(u+a-1)
    \end{align*} or the numbers $-b,-b-1,\ldots,-a+1$ are among of its roots, which is the same. In that case, throughout the proof, we say that a polynomial $P(u)$ defines the string $\mathit{S}(a,b)$ (see Definition \ref{string_definition}).

    By formula \eqref{good_formula_hi_hi1_action}, we see that the polynomial $P(u)$, corresponding to the product of fractions except the last one, defines the next strings:
    \begin{gather*}
        \mathcal{S}_{1} = \{\mathit{S}(\lambda_{i1}+i-1, \lambda_{i-1,1}+i-1), \ldots, \mathit{S}(\lambda_{i,i-1}+1, \lambda_{i-1,i-1}+1)\},\\
        \mathcal{S}_{2} = \{\mathit{S}(\lambda_{i1}+i+1, \lambda_{i+1,2}+i), \ldots, \mathit{S}(\lambda_{i,i-1}+2, \lambda_{i+1,i}+1)\},\\
        \mathit{S}(\lambda_{ii}, \lambda_{i+1,i+1}).
    \end{gather*} Among them, as a set, we should find the string
    \begin{gather*}
        \mathcal{S} = \mathit{S}(\lambda_{i+1,1}+i, \lambda_{ii}+1).
    \end{gather*} Now we will use relations \eqref{lambda_ij_relations} several times.

    First of all, all elements of the string $\mathit{S}(\lambda_{ii}, \lambda_{i+1,i+1})$ are strictly less than $\lambda_{ii}+1$, that is, we can exclude it.

    Secondly, notice that the strings of the $\mathcal{S}_{1}$ are ordered which means that for each allowable index $j$ all elements of the string $\mathit{S}(\lambda_{ij}+i-j, \lambda_{i-1,j}+i-j)$ are strictly greater than all elements of the next string $\mathit{S}(\lambda_{i,j+1}+i-j-1, \lambda_{i-1,j+1}+i-j-1)$. Similar property holds for $\mathcal{S}_{2}$.

    Finally, relations \eqref{lambda_ij_relations} give us
    \begin{align*}
        \lambda_{ii} \leq \lambda_{i+1,i} ~\text{and}~ \lambda_{ii} \leq \lambda_{i-1,i-1},
    \end{align*} and if both of the inequalities are strict, the first element of the searching string $\mathcal{S}$ would not be found neither in the $\mathcal{S}_{1}$ nor $\mathcal{S}_{2}$. Thus, if $v_{\Lambda}$ is almost-highest, then 
    \begin{align*}
        \lambda_{ii} = \lambda_{i+1,i} ~\text{or}~ \lambda_{ii} = \lambda_{i-1,i-1}.
    \end{align*}

    If $\lambda_{ii}=\lambda_{i-1,i-1}$, then the last string of the $\mathcal{S}_{1}$ is reduced, and the searching string $\mathcal{S}$ after the reduction starts with $\lambda_{i,i-1}+1$. This element is only equal to the greatest element of the last string of the $\mathcal{S}_{2}$, and after reduction the searching string starts with $\lambda_{i,i-1}+2$ and all elements of the last string of $\mathcal{S}_{2}$ are less than it, that is, the last string can be excluded.

    If $\lambda_{ii} = \lambda_{i+1,i}$, likewise, the last string of $\mathcal{S}_{2}$ is reduced, the searching string immediately starts with $\lambda_{i,i-1}+2$ and all elements of the last string of $\mathcal{S}_{1}$ are less than it, that is, the last string can be excluded.

    The rest of the proof runs as before. The smallest element of the searching string is less than or equal to the smallest elements of the last strings of $\mathcal{S}_{1}$ or $\mathcal{S}_{2}$, and it is equal to only if 
    \begin{align*}
        \lambda_{ij} = \lambda_{i+1,j} ~\text{or}~ \lambda_{ij} = \lambda_{i-1,j-1},
    \end{align*} which is desired conclusion because of relations \eqref{lambda_ij_relations}.
\end{proof}

\begin{corollary}
\label{h_ih_i1_eigenvalue}
    If a Gelfand-Tsetlin basis vector $v_{\Lambda}$ is almost-highest, then
    \begin{align*}
        \frac{h_i(u)}{h_{i+1}(u)} \cdot v_{\Lambda} = \frac{(u+\lambda_{i+1,1}+i-1)\cdot\prod_{j=2}^{i}(u+\min{\{\lambda_{i+1,j}, \lambda_{i-1,j-1}\}}+i-j)}{\prod_{j=1}^{i-1}(u+\max{\{\lambda_{i+1,j+1},\lambda_{i-1,j}\}}+i-j) \cdot (u+\lambda_{i+1,i+1})}v_{\Lambda}.
    \end{align*}
\end{corollary}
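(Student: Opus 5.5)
The plan is to substitute the almost-highest conditions of Theorem \ref{criterion_of_almost-highest_vector} into the reordered formula \eqref{good_formula_hi_hi1_action}, and then cancel factors telescopically. By Proposition \ref{GF_basis_eqigenvectors_for_hi}, formula \eqref{bad_formula_for_hi_hi1_action} already gives the eigenvalue of $h_i(u)/h_{i+1}(u)$ on $v_{\Lambda}$ as a rational function. All that remains is to simplify this function under the conditions $\lambda_{i1}=\lambda_{i+1,1}$ and $\lambda_{ij}=\min\{\lambda_{i+1,j},\lambda_{i-1,j-1}\}$ for $2\le j\le i$.

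I would organize the cancellation column by column, indexed by $j$. For each $j\in\{2,\ldots,i\}$, the numerator of \eqref{bad_formula_for_hi_hi1_action} contains the two factors
\begin{align*}
(u+\lambda_{ij}+i-j) \quad\text{and}\quad (u+\lambda_{ij}+i+1-j).
\end{align*}
The denominator contains the two candidates
\begin{align*}
(u+\lambda_{i+1,j}+i+1-j) \quad\text{and}\quad (u+\lambda_{i-1,j-1}+i-j+1).
\end{align*}
Both candidates carry the same shift $i+1-j$. If $\lambda_{ij}=\lambda_{i+1,j}$, then the numerator factor $(u+\lambda_{ij}+i+1-j)$ cancels the first candidate. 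If instead $\lambda_{ij}=\lambda_{i-1,j-1}$, it cancels the second.

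The uncancelled candidate is the one carrying the \emph{other} index. Since $\lambda_{ij}$ equals the minimum, the survivor carries the maximum. So the denominator keeps $u+\max\{\lambda_{i+1,j},\lambda_{i-1,j-1}\}+i+1-j$. After the reindexing $j\mapsto j+1$ this becomes $u+\max\{\lambda_{i+1,j+1},\lambda_{i-1,j}\}+i-j$ for $j=1,\ldots,i-1$, which is exactly the stated denominator product. In the numerator, the surviving factor of column $j$ is $(u+\lambda_{ij}+i-j)=(u+\min\{\lambda_{i+1,j},\lambda_{i-1,j-1}\}+i-j)$, matching the stated numerator product.

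The boundary terms are handled separately. At $j=1$, the numerator has $(u+\lambda_{i1}+i-1)(u+\lambda_{i1}+i)$ and the denominator has $(u+\lambda_{i+1,1}+i)$. Using $\lambda_{i1}=\lambda_{i+1,1}$, one factor cancels and $(u+\lambda_{i+1,1}+i-1)$ remains. The last denominator factor $(u+\lambda_{i+1,i+1})$ of the $h_{i+1}$-part has nothing to pair with and stays. This also accounts for the $u$ factors: the $u^{-1}$ from $h_i$ and the $u^{-1}$ from $h_{i+1}$ cancel in the quotient, so no $u$ appears in the final answer.

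The main obstacle is bookkeeping. Each numerator factor of the form $(u+\lambda_{ij}+i+1-j)$ must be matched with exactly one denominator factor, including the degenerate case where both equalities hold, i.e.\ $\lambda_{i+1,j}=\lambda_{i-1,j-1}$. There the two candidates coincide and either choice of cancellation gives the same result, so min and max are unambiguous. To keep this straight I would write the numerator and denominator as two lists indexed by $j$ and check the shifts explicitly.
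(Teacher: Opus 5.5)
Your proof is correct and follows the paper's argument: substitute the conditions of Theorem~\ref{criterion_of_almost-highest_vector} into the eigenvalue given by Proposition~\ref{GF_basis_eqigenvectors_for_hi} and cancel factors, so that in each column $j\in\{2,\ldots,i\}$ the surviving denominator factor is the one carrying $\max\{\lambda_{i+1,j},\lambda_{i-1,j-1}\}$. The paper organizes the same bookkeeping through the index set $J=\{j:\lambda_{i+1,j}\geq\lambda_{i-1,j-1}\}$, simplifying the eigenvalue of $h_{i+1}(u)$ first and then forming the quotient, whereas you cancel directly in \eqref{bad_formula_for_hi_hi1_action}; the cancellations are the same.
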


\begin{proof}
    Since the relations from Theorem \ref{criterion_of_almost-highest_vector} holds,
    \begin{multline*}
        h_i(u) \cdot v_{\Lambda} = (u+\lambda_{i+1,1}+i-1)\cdot\prod_{j=2}^{i}(u+\min{\{\lambda_{i+1,j}, \lambda_{i-1,j-1}\}}+i-j) \cdot\\
        \cdot u^{-1} \cdot\prod_{j=1}^{i-1}(u+\lambda_{i-1,j}+i-j)^{-1}.
    \end{multline*} Let $J\subset\{2,\ldots,i\}$ stand for the subset of indices such that
    \begin{align*}
        \lambda_{i+1,j} \geq \lambda_{i-1,j-1} \Rightarrow \lambda_{ij} = \lambda_{i-1,j-1}.
    \end{align*} By formula \eqref{h_i_eigenvector} for $h_{i+1}(u)$, we see that if $j\notin J$ then the factor
    \begin{align*}
        (u+\lambda_{i+1,j}+i+1-j)
    \end{align*} is reduced from the numerator and denominator of the eigenvalue of $h_{i+1}(u)$. Therefore, it follows that
    \begin{align*}
        h_{i+1}(u) \cdot v_{\Lambda} = \frac{u+\lambda_{i+1,i+1}}{u} \prod_{j\in J}\frac{(u+\lambda_{i+1,j}+(i+1)-j)}{(u+\lambda_{i-1,j-1}+i-(j-1))}.
    \end{align*} It suffices to observe that the denominator of the eigenvalue of $\frac{h_{i}(u)}{h_{i+1}(u)}$ for $v_{\Lambda}$ contains of
    \begin{gather*}
        (u+\lambda_{i+1,j}+(i+1)-j),~\text{if}~j\in J \iff \lambda_{i+1,j} \geq \lambda_{i-1,j-1},\\
        (u+\lambda_{i-1,j-1}+i-(j-1)),~\text{if}~j\notin J \iff \lambda_{i-1,j-1}\geq\lambda_{i+1,j}
    \end{gather*} and the factor $(u+\lambda_{i+1,i+1})$. So, the proposition follows.
\end{proof}

It remains to prove that the found almost-highest vectors are actually the highest. To this end, we shall discuss Gelfand-Tsetlin basis.

\begin{proposition}
\label{h_i_defines_GZ_basis}
    A Gelfand-Tsetlin basis vector $v_{\Lambda}$ is uniquely defined up to scalar by the eigenvalues of $h_1(u),\ldots,h_n(u)$. Moreover, the first $h_1(u),\ldots,h_i(u)$ define first $i$ rows of the corresponding tableau $\Lambda$ inclusively and the last $h_{i+1}(u),\ldots,h_n(u)$ define the rows from the $i$-th inclusively to the last one.
\end{proposition}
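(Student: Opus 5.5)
The approach is to read the rows of $\Lambda$ off the eigenvalues given in Proposition \ref{GF_basis_eqigenvectors_for_hi}. The first step is to multiply the eigenvalues of $h_1(u),\ldots,h_i(u)$ together and telescope. From formula \eqref{h_i_eigenvector} the product becomes
\begin{align*}
    h_1(u)h_2(u)\cdots h_i(u)\cdot v_{\Lambda} = \frac{\prod_{j=1}^{i}(u+\lambda_{ij}+i-j)}{u^{i}}\cdot c_i(u)\, v_{\Lambda},
\end{align*}
where $c_i(u)$ is an explicit product of shift ratios. The cleaner way to organize this is to go back to the proof of Proposition \ref{GF_basis_eqigenvectors_for_hi}. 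There the product $h_1(u)\cdots h_i(u)$, with the appropriate shifts of the argument, recovers the quantum minor $t^{1\ldots i}_{1\ldots i}$, whose eigenvalue on $v_\Lambda$ is $\prod_{j=1}^{i}(1+\lambda_{ij}(u+i-j)^{-1})$ up to a uniform shift of $u$. One can avoid products with mismatched shifts altogether by induction on $i$: knowing $h_i(u)$ and the eigenvalue of $t^{1\ldots i-1}_{1\ldots i-1}(u+i-1)$, one recovers the eigenvalue of $t^{1\ldots i}_{1\ldots i}(u+i-1)$.

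Next I show that the eigenvalue of the minor determines the $i$-th row. The eigenvalue is $\prod_{j=1}^{i}(u+\lambda_{ij}+i-j)$ divided by a known power of $u$, so it fixes the multiset $\{\lambda_{ij}+i-j\}_{j}$. Since $\lambda_{i1}\ge\ldots\ge\lambda_{ii}$, the numbers $\lambda_{ij}+i-j$ are strictly decreasing in $j$. Hence the multiset determines the row uniquely, which is just the Harish-Chandra type uniqueness. This proves that $h_1(u),\ldots,h_i(u)$ determine rows $1,\ldots,i$. In particular $h_1(u),\ldots,h_n(u)$ determine the whole tableau $\Lambda$, and Theorem \ref{gelfand_tsetlin_theorem} shows that the tableau determines $v_\Lambda$ up to a scalar. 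This gives the first claim, and it also replaces the Harish-Chandra argument sketched in the proof of that theorem.

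For the downward statement I run the induction the other way. The $n$-th row equals $\lambda$ and is fixed, so the eigenvalue of $t^{1\ldots n}_{1\ldots n}$ is a known series. Suppose the eigenvalue of the minor of size $m$ is known for some $m\ge i+1$. Dividing it by the eigenvalue of $h_m(u)$, with matching shifts, gives the eigenvalue of the minor of size $m-1$. Descending from $m=n$ to $m=i+1$ yields the eigenvalues of the minors of sizes $n-1,\ldots,i$. By the previous paragraph these determine rows $n,\ldots,i$, so $h_{i+1}(u),\ldots,h_n(u)$ determine rows $i$ through $n$.

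The only delicate point is keeping the argument shifts $u+i-1$ consistent in the telescoping. Under the automorphism \eqref{shift_automorphism} and Proposition \ref{shifted_subalgebra}, however, a shift of $u$ changes nothing about which data is determined, so I expect this bookkeeping to be the main, though routine, obstacle.
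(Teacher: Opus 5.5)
Your argument is correct and is essentially the paper's. The paper also peels off the rows inductively: it clears the known denominator in \eqref{h_i_eigenvector} going upward from $h_1(u)$, and for the second claim it does the same with $h_n^{-1}(u), h_{n-1}^{-1}(u),\ldots$ starting from the known row $\lambda$. This is your recovery of the eigenvalues of the minors $t^{1\ldots m}_{1\ldots m}$, just without naming them, and your reason for ordering the roots (the strict decrease of $\lambda_{ij}+i-j$) is the right one. One small slip: the eigenvalue of $t^{1\ldots i}_{1\ldots i}(u+i-1)$ is $\prod_{j=1}^{i}(u+\lambda_{ij}+i-j)$ divided by $u(u+1)\cdots(u+i-1)$, not by a power of $u$; since this denominator is known, nothing changes.
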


\begin{proof}
    The first statement is a direct consequence of Theorem \ref{gelfand_tsetlin_theorem} but as we promised we check it straightforwardly to prove the second statement. 

    We now use formula \eqref{h_i_eigenvector} and relations \eqref{lambda_ij_relations}. The series $h_1(u)$ acts by multiplying the vector $v_{\Lambda}$ by 
    \begin{align*}
        \frac{u+\lambda_{11}}{u},
    \end{align*} thus to find the $\lambda_{11}$ we should multiply the eigenvalue by $u$ and get the only root of the resulting polynomial.

    Then, as far as we know $\lambda_{11}$ and 
    \begin{align*}
        h_2(u)\cdot v_{\Lambda} = \frac{(u+\lambda_{21}+1)(u+\lambda_{22})}{u(u+\lambda_{11}+1)}v_{\Lambda},
    \end{align*} we can multiply the eigenvalue of $h_2(u)$ by $u(u+\lambda_{11}+1)$ and 
    \begin{align*}
        \big(\lambda_{21}+1, \lambda_{22}\big)
    \end{align*} are to be the ordered roots of the resulting polynomial. That is correct since the roots are integer.

    Continuing, as far as we know all rows from the first to the $i-1$-th, we can multiply the eigenvalue of $h_{i}(u)$ by
    \begin{align*}
        (u+\lambda_{i-1,1}+i-1) \cdot \ldots\cdot (u+\lambda_{i-1,i-1}+1)u,
    \end{align*} and $\lambda_{ij}+i-j$ is equal to the $j$-th ordered root of the corresponding polynomial.

    Actually, the similar algorithm can be started with the last row because we know the highest weight of the representation. We can now proceed analogously. If we multiply the eigenvalue of $h_{n}^{-1}(u)$ to the rational function
    \begin{align*}
        \frac{1}{u}(u+\lambda_{n1}+n-1) \cdot\ldots\cdot(u+\lambda_{nn}),
    \end{align*} we get the polynomial which the $j$-th ordered root is equal to $\lambda_{n-1,j}+n-j$. The same argument remains valid, and the proof is complete.
\end{proof}

\begin{definition}
    For any fixed Gelfand-Tsetlin tableau $\Lambda$ a subspace $\mathcal{A}(\Lambda)$ is spanned by Gelfand-Tsetlin basis vectors $v_{\Lambda^\prime}$ where all rows of the tableau $\Lambda^\prime$ except the $i$-th are equal to the corresponding rows of the tableau $\Lambda$.
\end{definition}

\begin{proposition}
\label{yang_holds_lambda}
    The subspace $\mathcal{A}(\Lambda)$ is invariant under the $\yang{2}^{(i)}$-action.
\end{proposition}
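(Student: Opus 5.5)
We have to show that $\yang{2}^{(i)}$ preserves the span of Gelfand--Tsetlin vectors whose rows other than the $i$-th coincide with those of $\Lambda$. The plan is to describe $\mathcal{A}(\Lambda)$ as a joint eigenspace of commuting operators that commute with $\yang{2}^{(i)}$.

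By Proposition \ref{h_i_defines_GZ_basis}, the eigenvalues of $h_1(u),\ldots,h_{i-1}(u)$ on $v_{\Lambda'}$ determine the first $i-1$ rows of $\Lambda'$. Likewise, the eigenvalues of $h_{i+2}(u),\ldots,h_n(u)$ determine the rows from the $(i+1)$-th to the $n$-th. This uses the "from the bottom" version of that proposition, shifted by one: $h_{j+1},\ldots,h_n$ determine rows $j,\ldots,n$, and we apply it with $j=i+1$. Hence $\mathcal{A}(\Lambda)$ is exactly the subspace spanned by those $v_{\Lambda'}$ on which $h_j(u)$ acts by the same eigenvalue as on $v_\Lambda$ for every $j\notin\{i,i+1\}$.

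Since the Gelfand--Tsetlin vectors form an eigenbasis for all $h_j(u)$ (Proposition \ref{GF_basis_eqigenvectors_for_hi}), this span is precisely the joint generalized eigenspace (indeed the joint eigenspace)
\begin{align*}
    \mathcal{A}(\Lambda)=\{v\in V_\lambda : h_j(u)\cdot v=\eta_j(u)\,v \ \text{for all}\ j\neq i,i+1\},
\end{align*}
where $\eta_j(u)$ denotes the eigenvalue of $h_j(u)$ on $v_\Lambda$. The inclusion $\supseteq$ uses the fact that a vector in this joint eigenspace, written in the eigenbasis, can only involve basis vectors having the same eigenvalues.

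The remaining step is that every generator of $\yang{2}^{(i)}$ commutes with all coefficients of $h_j(u)$ for $j\neq i,i+1$. For $h_i(u)$ and $h_{i+1}(u)$ this follows from $[h_i(u),h_j(v)]=0$ in Theorem \ref{new_realization}. For $e_i(u)$ and $f_i(u)$ it is the relation \eqref{drinfeld_commutations}: $[h_j(u),e_i(v)]=[h_j(u),f_i(v)]=0$ when $j\neq i,i+1$. A commuting operator preserves the joint eigenspaces, so $\mathcal{A}(\Lambda)$ is $\yang{2}^{(i)}$-invariant.

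The main point needing care is the bottom-up part of Proposition \ref{h_i_defines_GZ_basis}. One must check that $h_{i+2},\ldots,h_n$ alone recover rows $i+1,\ldots,n$, and do not also need $h_{i+1}$, which would pin down row $i$. Indeed, $h_n$ together with row $n$ gives row $n-1$; then $h_{n-1}$ gives row $n-2$; and so on down to $h_{i+2}$, which gives row $i+1$. So the information used is exactly what is needed and no more.
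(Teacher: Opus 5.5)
Your proof is correct and follows the same route as the paper. The generators of $\yang{2}^{(i)}$ commute with $h_j(u)$ for $j\neq i,i+1$ by \eqref{drinfeld_commutations}, and Proposition~\ref{h_i_defines_GZ_basis} identifies the joint eigenspace of these $h_j(u)$ (with the eigenvalues they have on $v_\Lambda$) with $\mathcal{A}(\Lambda)$. You are more explicit than the paper's short argument in two places: you state the joint-eigenspace characterization of $\mathcal{A}(\Lambda)$ directly, and you check that $h_{i+2}(u),\ldots,h_n(u)$ recover exactly rows $i+1,\ldots,n$ and nothing more.
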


\begin{proof}
    By  relation \eqref{drinfeld_commutations} for the Drinfeld generators, the elements of $\yang{2}^{(i)}$ commute with $h_j(u)$ unless $j=i$ or $j=i+1$. As a result, for every $w\in\yang{2}^{(i)}$ we have
    \begin{align*}
        h_j(u) \cdot w = \mu_j(u) w, ~j\not=i,i+1,
    \end{align*} where $\mu_j(u)$ is an eigenvalue of $h_j(u)$ for $v_{\Lambda}$. By Proposition \ref{h_i_defines_GZ_basis}, all rows except the $i$-th one of the tableaux of the Gelfand-Tsetlin basis vectors which generate $\mathcal{A}(\Lambda)$ are uniquely determined, which is our assertion.
\end{proof}

\begin{proposition}
\label{the_only_almost-highest_vector}
    There is only one up to scalar non-zero almost-highest vector in $\mathcal{A}(\Lambda)$.
\end{proposition}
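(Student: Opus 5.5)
The plan is to show that every almost-highest vector of $\mathcal{A}(\Lambda)$ is proportional to a Gelfand-Tsetlin basis vector, and then to apply Theorem \ref{criterion_of_almost-highest_vector}. That theorem pins down exactly one admissible $i$-th row, so exactly one basis vector survives.

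\emph{Step 1: the series $h_i(u)/h_{i+1}(u)$ separates the basis vectors of $\mathcal{A}(\Lambda)$.} By Proposition \ref{GF_basis_eqigenvectors_for_hi}, the basis vectors $v_{\Lambda'}$ spanning $\mathcal{A}(\Lambda)$ are eigenvectors of $h_i(u)/h_{i+1}(u)$. The eigenvalues are given by formula \eqref{bad_formula_for_hi_hi1_action}. In that formula, rows $i-1$ and $i+1$ are the same for every such $\Lambda'$, so they contribute a common factor. Row $i$ enters only through
\begin{align*}
    Q(u)\,Q(u+1), \qquad Q(u)=\prod_{j=1}^{i}(u+\lambda'_{ij}+i-j).
\end{align*}
It therefore suffices to show that $R(u)=Q(u)Q(u+1)$ determines the monic polynomial $Q$. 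I would argue by induction on $\deg Q$, comparing roots. The roots of $Q(u+1)$ are the roots of $Q$ shifted by $-1$. Hence the smallest root $\rho$ of $R$ satisfies $\rho=\rho_0-1$, where $\rho_0$ is the smallest root of $Q$. Dividing $R$ by $(u-\rho)(u-\rho_0)$ gives $\tilde Q(u)\tilde Q(u+1)$ with $\tilde Q=Q/(u-\rho_0)$, and the induction proceeds. So distinct tableaux in $\mathcal{A}(\Lambda)$ give distinct eigenvalues of $h_i(u)/h_{i+1}(u)$.

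\emph{Step 2: almost-highest vectors are eigenvectors of the ratio.} Let $w\in\mathcal{A}(\Lambda)$ be almost-highest. By definition, $w$ lies in the eigenspace of $h_i(u)/h_{i+1}(u)$ for the eigenvalue $P(u+1)/P(u)$, where we use the identification of Proposition \ref{definition_of_yang2i}. Expand $w$ in the basis $v_{\Lambda'}$ and compare coefficients of the series in $u^{-1}$. Because the eigenvalues of distinct basis vectors are distinct rational functions by Step 1, all basis vectors occurring in $w$ with nonzero coefficient must share this single eigenvalue. Hence $w$ is a multiple of one basis vector $v_{\Lambda'}$, and that $v_{\Lambda'}$ is itself almost-highest.

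\emph{Step 3: existence and uniqueness of the admissible row.} By Theorem \ref{criterion_of_almost-highest_vector}, $v_{\Lambda'}$ is almost-highest if and only if
\begin{align*}
    \lambda'_{i1}=\lambda_{i+1,1}, \qquad \lambda'_{ij}=\min\{\lambda_{i+1,j},\lambda_{i-1,j-1}\} \quad (2\le j\le i).
\end{align*}
These equations determine row $i$ uniquely, which gives uniqueness. For existence, I would check that this row interlaces correctly with rows $i\pm1$, so that it defines a genuine Gelfand-Tsetlin tableau. For $j\ge2$ the value $\min\{\lambda_{i+1,j},\lambda_{i-1,j-1}\}$ satisfies:
\begin{itemize}
    \item it is at most $\lambda_{i+1,j}$ and at most $\lambda_{i-1,j-1}$;
    \item it is at least $\lambda_{i+1,j+1}$ and at least $\lambda_{i-1,j}$ (for $j\le i-1$), because both arguments of the min dominate these by the interlacing of rows $i-1$ and $i+1$ with row $i$ of $\Lambda$.
\end{itemize}
The first entry $\lambda_{i+1,1}$ satisfies the conditions trivially. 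So exactly one basis vector of $\mathcal{A}(\Lambda)$ is almost-highest, which proves the proposition.

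The main obstacle is Step 1, namely checking carefully that the eigenvalue of the ratio really separates the tableaux. The root-peeling argument for $Q(u)Q(u+1)$ is where care is needed, together with the observation that all factors coming from rows $i\pm1$ are common to the whole of $\mathcal{A}(\Lambda)$.
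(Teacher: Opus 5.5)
Your proof is correct and follows the paper's route: reduce to Gelfand--Tsetlin basis vectors, then read off the unique $i$-th row from Theorem \ref{criterion_of_almost-highest_vector}. Along the way you justify the reduction that the paper only asserts (the definition makes $w$ an eigenvector of $h_i(u)/h_{i+1}(u)$, not of $h_i(u)$ and $h_{i+1}(u)$ separately), and you also check existence. Your Step 1 can in fact be skipped: every basis vector occurring in $w$ has eigenvalue $P(u+1)/P(u)$ and so is itself almost-highest, and Theorem \ref{criterion_of_almost-highest_vector} already allows at most one such basis vector in $\mathcal{A}(\Lambda)$.
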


\begin{proof}
    The almost-highest vectors are eigenvectors for both $h_i(u)$ and $h_{i+1}(u)$, thus, they are from the Gelfand-Tsetlin basis. By Theorem \ref{criterion_of_almost-highest_vector}, the $i$-th row of the almost-highest vector can be defined by the $i-1$ and the $i+1$-th rows of the tableau, but all Gelfand-Tsetlin basis vectors from $\mathcal{A}(\Lambda)$ have the same $i-1$ and $i+1$-th rows of the tableau. Thus, the required almost-highest vector is uniquely defined up to scalar.
\end{proof}

\begin{theorem}
    The $\gl{n}$-module $V_{\lambda}$ considered as a representation of $\yang{2}^{(i)}$ is a direct sum of irreducible $\yang{2}$-modules. 
\end{theorem}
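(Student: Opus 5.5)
The plan is to decompose $V_{\lambda}$ as a direct sum of the invariant subspaces $\mathcal{A}(\Lambda)$ and to show that each one is an irreducible $\yang{2}^{(i)}$-module. Fixing $i$, the Gelfand-Tsetlin basis splits into classes according to all rows other than the $i$-th, which gives
\begin{align*}
    V_{\lambda} = \bigoplus \mathcal{A}(\Lambda).
\end{align*}
By Proposition \ref{yang_holds_lambda} each summand is $\yang{2}^{(i)}$-invariant, so it suffices to prove that every $\mathcal{A}(\Lambda)$ is irreducible.

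Fix $\mathcal{A}=\mathcal{A}(\Lambda)$. The space is finite-dimensional, so it contains an irreducible $\yang{2}^{(i)}$-submodule $W$. By Theorem \ref{theorem_of_highest_weight_repr}, $W$ is a highest weight module. Its highest vector $\zeta$ satisfies $h_i(u)\zeta=\lambda_1(u)\zeta$ and $h_{i+1}(u)\zeta=\lambda_2(u)\zeta$. Since $W$ is finite-dimensional, Theorem \ref{classification_theorem_yang2} gives $\lambda_1(u)/\lambda_2(u)=P(u+1)/P(u)$, so $\zeta$ is almost-highest. By Proposition \ref{the_only_almost-highest_vector}, $\zeta$ is, up to scalar, the unique almost-highest vector $v_{\Lambda^0}$ of $\mathcal{A}$, the one described in Theorem \ref{criterion_of_almost-highest_vector}.

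The main step is to show that $W=\mathcal{A}$. I would argue by complements. The operators $h_i(u)$ and $h_{i+1}(u)$ act diagonally on $\mathcal{A}$ in the Gelfand-Tsetlin basis, with pairwise distinct joint eigenvalues (Proposition \ref{h_i_defines_GZ_basis}). Hence every $\gz{n}$-stable subspace, in particular $W$, is spanned by the basis vectors it contains. Let $W'$ be the span of the remaining basis vectors of $\mathcal{A}$. If $W'$ is $\yang{2}^{(i)}$-stable and nonzero, it contains another irreducible submodule, whose highest vector would be a second almost-highest vector, which is impossible. So $W'=0$ and $W=\mathcal{A}$.

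The obstacle is proving that $W'$ is invariant. To get around it, I would instead use complete reducibility of $V_{\lambda}$ over $\yang{2}^{(i)}$, which follows from the preceding proposition via the centralizer construction, with each $K_{\mu}$ irreducible. Granting it, $\mathcal{A}$ is a direct summand of a completely reducible module, hence completely reducible. Each irreducible summand has its own almost-highest highest vector, and these vectors are linearly independent, so uniqueness forces exactly one summand.

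For a self-contained route avoiding that proposition, I would compare dimensions. Write $P$ for the Drinfeld polynomial read off from Corollary \ref{h_ih_i1_eigenvalue}. It defines strings $\mathit{S}(\lambda_{i+1,1}+i-1,\max\{\lambda_{i+1,2},\lambda_{i-1,1}\}+i-1),\ldots$, and these strings are pairwise disjoint thanks to the ordering of tableau entries. By Theorem \ref{yangian_2_GF_realization}, $\dim W$ is the product of the string lengths plus one. I would then check that this product counts exactly the admissible $i$-th rows between rows $i-1$ and $i+1$, since each entry $\lambda_{ij}$ ranges over the interval $[\max\{\lambda_{i+1,j+1},\lambda_{i-1,j}\},\min\{\lambda_{i+1,j},\lambda_{i-1,j-1}\}]$ independently. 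That count equals $\dim\mathcal{A}$, so $W=\mathcal{A}$.
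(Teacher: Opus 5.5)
Your self-contained dimension-count route is correct and is essentially the paper's proof. There too $V_{\lambda}=\bigoplus_{\Lambda}\mathcal{A}(\Lambda)$, the highest vector of an irreducible $\yang{2}^{(i)}$-submodule of $\mathcal{A}(\Lambda)$ (taken inside $\yang{2}^{(i)}\cdot v_{\Lambda}$) is identified with the unique almost-highest vector, and $\prod_{j}(\alpha_j-\beta_j+1)$ from Corollary~\ref{h_ih_i1_eigenvalue} is matched with the number of admissible $i$-th rows.

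The centralizer shortcut is not needed, since it rests on the only-sketched proposition that already asserts complete reducibility. One detail that both you and the paper leave implicit: to apply Theorem~\ref{yangian_2_GF_realization}, the tensor product must be irreducible. This holds because $\beta_j\geq\alpha_{j+1}+1$, so consecutive strings are separated by a gap and are therefore in general position, not merely disjoint.
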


\begin{proof}
    As a vector space, $V_{\lambda}$ is a direct sum
    \begin{align*}
        V_{\lambda} = \bigoplus_{\Lambda} \mathcal{A}(\Lambda),
    \end{align*} with $\Lambda$ are all Gelfand-Tsetlin tableaux which differs not only in the $i$-th row. By the last Proposition \ref{the_only_almost-highest_vector}, there is only one almost-highest vector in $\mathcal{A}(\Lambda)$, let it be $v_{\Lambda}$. By Proposition \ref{yang_holds_lambda},
    \begin{align*}
        \yang{2}^{(i)} \cdot v_{\lambda} \subseteq \mathcal{A}(\Lambda).
    \end{align*}

    Consider the representation $\yang{2}^{(i)} \cdot v_{\lambda}$ and its irreducible subrepresentation. As we have already mentioned, the highest vector is always almost-highest. Since the $v_{\lambda}$ is the only almost-highest vector in $\mathcal{A}(\Lambda)$ and hence in $\yang{2}^{(i)} \cdot v_{\lambda}$, the vector $v_{\lambda}$ is the highest vector of that irreducible subrepresentation.

    The proof is completed by showing that the dimension of $\yang{2}^{(i)} \cdot v_{\lambda}$  equals the dimension of $\mathcal{A}(\Lambda)$, that is, the number of all Gelfand-Tsetlin tableaux with fixed all rows except the $i$-th.

    Indeed, by Corollary \ref{h_ih_i1_eigenvalue}, after some automorphism of multiplication that irreducible subrepresentation is isomorphic to
    \begin{align*}
        \Lira{1} \otimes\ldots\otimes \Lira{i},
    \end{align*} where for all $j\in\{1,\ldots,i\}$
    \begin{gather*}
        \alpha_j = \min{\{\lambda_{i+1,j},\lambda_{i-1,j-1}\}}+i-j,\\
        \beta_j = \max{\{\lambda_{i+1,j+1},\lambda_{i-1,j}\}}+i-j
    \end{gather*} with $\alpha_1=\lambda_{i+1,1}+i-1$ and $\beta_{i}=\lambda_{i+1,i+1}$. The dimension of each factor of the tensor product is equal to
    \begin{align*}
        \min{\{\lambda_{i+1,j},\lambda_{i-1,j-1}\}} - \max{\{\lambda_{i+1,j+1},\lambda_{i-1,j}\}}+1.
    \end{align*} That is the number of elements in
    \begin{align*}
        \mathit{S}(\min{\{\lambda_{i+1,j},\lambda_{i-1,j-1}\}}+1, \max{\{\lambda_{i+1,j+1},\lambda_{i-1,j}\}}),
    \end{align*} but $\lambda_{ij}$ can be equal only to an element of that string since relations \eqref{lambda_ij_relations} hold.

    Therefore, the direct sum decomposition
    \begin{align*}
        V_{\lambda} = \bigoplus_{\Lambda} \mathcal{A}(\Lambda)
    \end{align*} is the direct sum of the irreducible subrepresentations of $\yang{2}^{(i)}$.
\end{proof}

\begin{corollary}
\label{effectively_describing_subrepr}
    The $\yang{2}^{(i)}$-module $\mathcal{A}(\Lambda)$ is isomorphic to
    \begin{align*}
        V(\Lambda) = \Lira{1} \otimes\ldots\otimes \Lira{i},
    \end{align*} where for all $j\in\{1,\ldots,i\}$
    \begin{gather*}
        \alpha_j = \min{\{\lambda_{i+1,j},\lambda_{i-1,j-1}\}}+i-j,\\
        \beta_j = \max{\{\lambda_{i+1,j+1},\lambda_{i-1,j}\}}+i-j
    \end{gather*} with $\alpha_1=\lambda_{i+1,1}+i-1$ and $\beta_{i}=\lambda_{i+1,i+1}$ after the automorphism of multiplication by
    \begin{align*}
        f_{\Lambda}(u)=u^{1-i} \prod_{j=1}^{i-1}(u+\lambda_{i-1,j}+i-j).
    \end{align*} We emphasize that each pair of the corresponding strings $\mathcal{S}(\alpha_i,\beta_i)$ does not intersect thus the formulas from Theorem \ref{yangian_2_GF_realization} hold.
\end{corollary}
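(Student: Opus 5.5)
The proof will mostly collect what the previous theorem already established and then do two explicit checks. The first check identifies the highest weight together with the normalizing series $f_{\Lambda}(u)$. The second check verifies that the strings are pairwise disjoint.

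By the preceding theorem, $\mathcal{A}(\Lambda)=\yang{2}^{(i)}\cdot v$. Here $v$ is the unique almost-highest vector of Proposition \ref{the_only_almost-highest_vector}. The module is irreducible with highest vector $v$, and by Proposition \ref{GF_basis_eqigenvectors_for_hi} and Theorem \ref{criterion_of_almost-highest_vector} its highest weight is $(\mu_i(u),\mu_{i+1}(u))$. These are the eigenvalues of $h_i(u)$ and $h_{i+1}(u)$ on $v$, which are known explicitly.

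The tensor product $\Lira{1}\otimes\ldots\otimes\Lira{i}$ has highest vector the tensor product of highest vectors. By the comultiplication \eqref{comultiplication} and Proposition \ref{action_of_tij}, its highest weight is
\begin{align*}
\Bigl(\prod_{j}(1+\alpha_j u^{-1}),\ \prod_{j}(1+\beta_j u^{-1})\Bigr).
\end{align*}
By Theorem \ref{classification_theorem_yang2}, an irreducible finite-dimensional module is determined by its highest weight. So it suffices to check two identities:
\begin{align*}
f_{\Lambda}(u)\mu_i(u)=u^{-i}\prod_{j=1}^{i}(u+\alpha_j), \qquad f_{\Lambda}(u)\mu_{i+1}(u)=u^{-i}\prod_{j=1}^{i}(u+\beta_j).
\end{align*}

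For the first identity I substitute formula \eqref{h_i_eigenvector} for $h_i(u)$ together with the criterion $\lambda_{i1}=\lambda_{i+1,1}$ and $\lambda_{ij}=\min\{\lambda_{i+1,j},\lambda_{i-1,j-1}\}$. The denominator $u\prod_{j}(u+\lambda_{i-1,j}+i-j)$ cancels against $f_{\Lambda}(u)$ up to the power $u^{-i}$, and the numerator becomes $\prod_j(u+\alpha_j)$.

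For the second identity I would avoid a direct computation and instead use Corollary \ref{h_ih_i1_eigenvalue}. The ratio $h_i(u)/h_{i+1}(u)$ on $v$ equals $\prod_j(u+\alpha_j)/\prod_j(u+\beta_j)$, where the denominator is read off as $\beta_j=\max\{\lambda_{i+1,j+1},\lambda_{i-1,j}\}+i-j$ for $j<i$ and $\beta_i=\lambda_{i+1,i+1}$. Dividing the first identity by this ratio then gives the second. The only care needed is the reindexing of the factors in that corollary's denominator, including the convention for the last factor.

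Disjointness of the strings $\mathit{S}(\alpha_j,\beta_j)=\{\beta_j,\ldots,\alpha_j-1\}$ is the step most likely to hide an off-by-one error, so I expect it to be the main obstacle. The plan is to show $\alpha_{j+1}\le\beta_j$, which makes every element of the $(j+1)$-st string smaller than every element of the $j$-th. We have
\begin{align*}
\alpha_{j+1}=\min\{\lambda_{i+1,j+1},\lambda_{i-1,j}\}+i-j-1 \le \max\{\lambda_{i+1,j+1},\lambda_{i-1,j}\}+i-j-1<\beta_j,
\end{align*}
so the strings are even strictly separated. One must also check $\alpha_j-\beta_j\in\mathbb{Z}_{\geq 0}$, i.e.\ $\min\{\lambda_{i+1,j},\lambda_{i-1,j-1}\}\ge\max\{\lambda_{i+1,j+1},\lambda_{i-1,j}\}$. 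This follows from the four interlacing inequalities of the tableau among rows $i-1$ and $i+1$, taken through row $i$, so that both minima dominate both maxima. For $j=1$ and $j=i$ the convention that the missing entries are absent is used.

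Since the strings are disjoint, Theorem \ref{yangian_2_GF_realization} applies. The multiplication automorphism does not affect $e$ and $f$, by Corollary \ref{sl_n_drinfeld_formulas_yang2}.
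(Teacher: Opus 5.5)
Your proposal is correct and follows essentially the paper's route. Like the paper, you take the isomorphism up to a multiplication twist from the preceding theorem. You then pin down $f_{\Lambda}(u)$ by matching the $h_i(u)$-eigenvalue on the almost-highest vector, computed from Proposition \ref{GF_basis_eqigenvectors_for_hi}, with $u^{-i}\prod_j(u+\alpha_j)$ from \eqref{T_11_formula}; you also check the second component via Corollary \ref{h_ih_i1_eigenvalue} and verify the string conditions, which the paper only asserts.

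One point is worth stating explicitly. Irreducibility of $V(\Lambda)$ is needed both for the highest-weight comparison and as a hypothesis of Theorem \ref{yangian_2_GF_realization}. It comes from general position in Theorem \ref{theorem_string_conditions}, not from disjointness alone. Your inequality $\alpha_{j+1}\le\beta_j-1$, which leaves $\beta_j-1$ outside both strings, is exactly what supplies it.
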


\begin{proof}
    We only need to prove the formula for $f_{\Lambda}(u)$. It uniquely determines by
    \begin{align*}
        f_{\Lambda}(u)\lambda_1(u) = \lambda_1^\prime(u),
    \end{align*} with $\lambda_1(u)$ and $\lambda_1^\prime(u)$ are equal to eigenvalues of the highest vectors of $\mathcal{A}(\Lambda)$ and $V(\Lambda)$ respectively. The formula for $\lambda_1(u)$ can be proved by Proposition \ref{GF_basis_eqigenvectors_for_hi} following the proof of Corollary \ref{h_ih_i1_eigenvalue} and by formula \eqref{T_11_formula}
    \begin{align*}
        \lambda_1^\prime(u) = u^{-i}(u+\lambda_{i+1,1}+i-1)\cdot\prod_{j=2}^{i}(u+\min{\{\lambda_{i+1,j}, \lambda_{i-1,j-1}\}}+i-j).
    \end{align*}
\end{proof}

Now we are ready to write the explicit realization of the $V_{\lambda}$. 
\begin{theorem}[Gelfand-Tsetlin realization]
\label{Gelfand_Tsetlin_realization}
    Suppose that $V_{\lambda}$ is the finite-dimensional irreducible representation of the highest weight $\lambda$. Then the Gelfand-Tsetlin basis vectors $\{v_{\Lambda}\}$ can be chosen from the corresponding lines in a way that the next formulas hold:
    \begin{gather*}
        E_{ii}\cdot v_{\Lambda} = \left(\sum_{j=1}^{i}\lambda_{ij}-\sum_{j=1}^{i-1}\lambda_{i-1,j}\right)v_{\Lambda},\\
        E_{i+1,i}\cdot v_{\Lambda} = \sum_{j=1}^{i}\frac{1}{(l_{i1}-l_{ij})\cdot\ldots\land^{j}\ldots\cdot(l_{ii}-l_{ij})}v_{\Lambda-\delta_{ij}},\\
        E_{i,i+1}\cdot v_{\Lambda} = -\sum_{j=1}^{i} \frac{\prod_{k=1}^{i+1}(l_{i+1,k}-l_{ij}) \cdot \prod_{k=1}^{i-1}(l_{i-1,k}-l_{ij}-1)}{(l_{i1}-l_{ij})\cdot\ldots\land^{j}\ldots\cdot(l_{ii}-l_{ij})}v_{\Lambda+\delta_{ij}},
    \end{gather*} where $l_{ij}=\lambda_{ij}-j$, the tableau $\Lambda\pm\delta_{ij}$ differs from $\Lambda$ only by $\lambda_{ij}\pm 1$ and equals zero if $\Lambda\pm\delta_{ij}$ is not a Gelfand-Tsetlin tableau, and $\land^{j}$ means a skip of the $j$-th factor equal zero.

    Moreover, that formulas uniquely determine the representation.
\end{theorem}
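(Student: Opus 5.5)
The plan is to derive each formula from the $\yang{2}^{(i)}$-structure of the subspaces $\mathcal{A}(\Lambda)$, using Corollary \ref{effectively_describing_subrepr} together with Corollary \ref{sl_n_drinfeld_formulas_yang2}.

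\textbf{Diagonal action.} The formula for $E_{ii}$ is immediate. By \eqref{row_of_h_i} it is the $u^{-1}$ coefficient of $\pi(h_i(u))$. Expanding the eigenvalue \eqref{h_i_eigenvector} in $u^{-1}$ gives the coefficient
\begin{align*}
    \sum_{j=1}^{i}(\lambda_{ij}+i-j)-\sum_{j=1}^{i-1}(\lambda_{i-1,j}+i-j).
\end{align*}
The constants $i-j$ contribute $\sum_{j=1}^{i}(i-j)-\sum_{j=1}^{i-1}(i-j)=0$, so only the claimed expression remains.

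\textbf{Action of $E_{i+1,i}$ and $E_{i,i+1}$.} Fix $i$ and a block $\mathcal{A}(\Lambda)$. By Corollary \ref{effectively_describing_subrepr}, this block is isomorphic to $V(\Lambda)=\Lira{1}\otimes\ldots\otimes\Lira{i}$ after multiplication by $f_\Lambda(u)$. By Corollary \ref{sl_n_drinfeld_formulas_yang2}, the series $e_i(u)$ and $f_i(u)$ are insensitive to this twist.

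The basis vectors $v_r$ of Theorem \ref{yangian_2_GF_realization} are $T_{11}$-eigenvectors. By Remark \ref{uniqeuly_defined_T11} and Proposition \ref{h_i_defines_GZ_basis}, each $v_r$ corresponds to a unique tableau in $\mathcal{A}(\Lambda)$. To make the correspondence explicit, compare the eigenvalue $\prod_j(u+\alpha_j-r_j)$ with the $h_i(u)$-eigenvalue from \eqref{h_i_eigenvector}, rescaled by $f_\Lambda(u)$. This gives $\alpha_j-r_j=\lambda_{ij}+i-j$, i.e. $r_j=\alpha_j-\lambda_{ij}-i+j$. Raising $r_j$ by one therefore lowers $\lambda_{ij}$ by one. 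Since $f_i(u)$ carries $v_r$ to the $v_{r+\delta_j}$, this is consistent with $E_{i+1,i}$ sending $\Lambda$ to $\Lambda-\delta_{ij}$.

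I then read off $E_{i+1,i}$ and $E_{i,i+1}$ as the $u^{-1}$ coefficients of \eqref{f_1_whole_formula_with_u} and \eqref{e_1_whole_formula_with_u}, as dictated by \eqref{row_of_f_i} and \eqref{row_of_e_i}. Since each $(u+c)^{-1}$ has leading coefficient $1$, the $u^{-1}$ coefficient is just the constant prefactor. Substituting $\alpha_m-r_m=\lambda_{im}+i-m=l_{im}+i$, the denominators become
\begin{align*}
    \prod_{m\neq j}(\alpha_m-r_m-\alpha_j+r_j)=\prod_{m\neq j}(l_{im}-l_{ij}),
\end{align*}
which matches the $E_{i+1,i}$ formula exactly.

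\textbf{The $E_{i,i+1}$ numerator.} The raising formula carries the numerator $\prod_m(\alpha_m-\alpha_j+r_j)(\beta_m-\alpha_j+r_j-1)$, evaluated with $\alpha_m,\beta_m$ involving $\min$/$\max$ of neighbouring rows. A possible index shift has to be tracked: the raising formula is indexed at the target vector $v_r$, with $r$ its own index, and lowers to $v_{r-\delta_j}$.

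The step I expect to be the main obstacle is showing that this numerator equals $\prod_{k=1}^{i+1}(l_{i+1,k}-l_{ij})\prod_{k=1}^{i-1}(l_{i-1,k}-l_{ij}-1)$ up to factors that cancel. The argument I would try is that the multiset $\{\alpha_m\}\cup\{\beta_m-1\}$ equals $\{\lambda_{i+1,k}+i+1-k\}\cup\{\lambda_{i-1,k}+i-k\}$ after a uniform shift. This holds because each $\min/\max$ pair splits the numbers $\lambda_{i+1,j}$ and $\lambda_{i-1,j-1}$ between $\alpha_j$ and $\beta_{j-1}$. Equivalently, both sides are read off from the two factorizations of the $\qdet{u}$-eigenvalue, namely Proposition \ref{action_of_quantum_determinant} for $V(\Lambda)$ versus the product of the $h_i,h_{i+1}$ eigenvalues; I would use this identity to avoid the casework.

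\textbf{Normalization and uniqueness.} The $\yang{2}$-bases $v_r$ on different blocks, and for different $i$, come with their own normalizations. These must be reconciled into one global choice of $v_\Lambda$ making all formulas hold at once. I would fix $v_\Lambda$ by choosing a path $\lambda_{ij}$ from the highest vector and applying $E_{i+1,i}$-type operators. Consistency follows because the lowering coefficients are forced by the commutation relations among the $E_{k+1,k}$, or more simply because a rescaling $v_\Lambda\mapsto c_\Lambda v_\Lambda$ can absorb any coefficient of the $E_{i+1,i}$ formula. Once the lowering coefficients are fixed, the raising ones are determined by $[E_{i,i+1},E_{i+1,i}]$ acting diagonally.

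Uniqueness of the representation follows because the formulas determine the action of the generators $E_{ii},E_{i,i+1},E_{i+1,i}$ of $\gl{n}$.
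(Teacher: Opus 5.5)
\textbf{There is a genuine gap, in the gluing step, and it cannot be closed.} Your blockwise computations agree with the paper's and are correct: the $E_{ii}$ eigenvalue, $\alpha_j-r_j=l_{ij}+i$, and the denominators $\prod_{m\neq j}(l_{im}-l_{ij})$. The numerator is also right: splitting each pair $\lambda_{i+1,m},\lambda_{i-1,m-1}$ between $\alpha_m$ and $\beta_{m-1}$ is exactly the bookkeeping needed, and the $\qdet{u}$ variant works too once the twist $f_\Lambda(u)f_\Lambda(u-1)$ is included. But all this shows only that \emph{for each fixed $i$} some choice of Gelfand--Tsetlin vectors realizes the $E_{i+1,i}$, $E_{i,i+1}$ formulas. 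It does not give one basis that works for all $i$ at once.

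Under $v_\Lambda\mapsto c_\Lambda v_\Lambda$, the coefficient on an edge $\Lambda\to\Lambda-\delta_{ij}$ is multiplied by $c_\Lambda/c_{\Lambda-\delta_{ij}}$. So for a square of tableaux $\Lambda,\ \Lambda-\delta_{ij},\ \Lambda-\delta_{i+1,j'},\ \Lambda-\delta_{ij}-\delta_{i+1,j'}$, the ratio of the products of lowering coefficients along the two paths is invariant. Rescaling therefore cannot "absorb any coefficient". Nor do the commutation relations force this ratio to be $1$, because $[E_{i+2,i+1},E_{i+1,i}]=E_{i+2,i}\neq 0$. The stated $E_{i+1,i}$-coefficient depends only on row $i$, so the stated formulas force the ratio to be $1$. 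In the actual module the ratio (path through $\Lambda-\delta_{ij}$ over path through $\Lambda-\delta_{i+1,j'}$) equals $(l_{i+1,j'}-l_{ij}+1)/(l_{i+1,j'}-l_{ij})\neq 1$.

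Here is a concrete instance. Take $n=3$, $\lambda=(2,0,0)$, so $V_\lambda=\mathrm{Sym}^2\mathbb{C}^3$ with $E_{ab}=x_a\partial_b$.
\begin{itemize}
\item Write tableaux as $(\lambda_{21},\lambda_{22};\lambda_{11})$. Weight spaces are one-dimensional, so $v_{(2,0;1)},v_{(2,0;0)},v_{(1,0;1)},v_{(1,0;0)}$ are multiples of $x_1x_2,\ x_2^2,\ x_1x_3,\ x_2x_3$ respectively.
\item In the module, $E_{32}E_{21}(x_1x_2)=2x_2x_3$ but $E_{21}E_{32}(x_1x_2)=x_2x_3$, so the invariant is $2$.
\item The stated formulas give coefficient $1$ for $E_{21}$ on both edges, and $1/(l_{22}-l_{21})=-1/3$ for $E_{32}$ on both edges. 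Hence their invariant is $1$.
\item Equivalently, the stated operators give $[E_{12},E_{32}]\,v_{(2,0;0)}=\tfrac13 v_{(1,0;1)}\neq 0$, so they do not define a $\gl{3}$-module.
\end{itemize}

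The paper's own gluing argument, one rescaling per $\mathcal{A}^{(i)}(\Lambda)$, fails at the same point: the block scalars are overdetermined around these squares. For the same reason, the $C_\Lambda$ of the closing Remark is path-dependent and does not relate the stated formulas to Molev's.

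What the $\yang{2}^{(i)}$ analysis genuinely determines is each rescaling-invariant edge product (raising coefficient times lowering coefficient). A correct global normalization must let the lowering coefficients couple adjacent rows, as in the Molev formulas quoted in that Remark. The adjacent-row squares then have to be checked separately, for example via $[E_{i,i+1},E_{i+2,i+1}]=0$.
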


\begin{proof}
    Recall formulas \eqref{row_of_h_i}--\eqref{row_of_e_i} of the first terms of the Drinfeld generators.
    By formula \eqref{h_i_eigenvector} we immediately establish that
    \begin{align*}
        E_{ii}\cdot v_{\Lambda} = \left(\sum_{j=1}^{i}\lambda_{ij}-\sum_{j=1}^{i-1}\lambda_{i-1,j}\right)v_{\Lambda}.
    \end{align*}

    The elements $E_{i+1,i}$ and $E_{i,i+1}$ are the first non-zero terms of the formal series $f_i(u)$ and $e_i(u)$ respectively as well. We wish to apply the formulas from Corollary \ref{sl_n_drinfeld_formulas_yang2}, and as we discussed, they only depend on the tensor product $V(\Lambda)$, but not on $f_{\Lambda}(u)$.

    Hence we may apply formulas \eqref{f_1_whole_formula_with_u} and \eqref{e_1_whole_formula_with_u} directly. The only thing we need to know is 
    $\alpha_j-r_j$. We observe that for any vector $v_{\Lambda}$
    \begin{align*}
        f_{\Lambda}(u)h_i(u) \cdot v_{\Lambda} = \prod_{j=1}^{i}(u+\lambda_{ij}+i-j).
    \end{align*} By Remark \ref{uniqeuly_defined_T11}, we deduce that
    \begin{align*}
        \alpha_j-r_j = \lambda_{ij}+i-j=l_{ij}+i
    \end{align*} From now on, one may assume that each Gelfand-Tsetlin basis vector $v_{\Lambda}$ from $\mathcal{A}(\Lambda)$ is chosen in a way that it is equal to the basis vector of the irreducible representation $V(\Lambda)$ defined in the \ref{v_r_definition}. We immediately get
    \begin{align*}
        E_{i+1,i}\cdot v_{\Lambda} = \sum_{j=1}^{i}\frac{1}{(l_{i1}-l_{ij})\cdot\ldots\land^{j}\ldots\cdot(l_{ii}-l_{ij})}v_{\Lambda-\delta_{ij}}.
    \end{align*} It is important to pay attention that adding $\pm1$ to indices of the Gelfand-Tsetlin basis in $\yang{2}$ is equivalent to adding $\mp1$ to the $ij$-th element of the corresponding Gelfand-Tsetlin tableau according to the formula of $r_j$.

    To establish the last formula we have to calculate
    \begin{align*}
        \prod_{m=1}^{k}(\alpha_m-\alpha_j+r_j)(\beta_m-\alpha_j+r_j-1)
    \end{align*} in terms of the Gelfand-Tsetlin tableau $\Lambda$. On account of Corollary \ref{effectively_describing_subrepr}, all elements $\alpha_j,\beta_j$ for all $j\in\{1,\ldots,i\}$ correspond to all elements of the $i-1$-th and the $i+1$-th row of the $\Lambda$.

    More precisely, if element $\alpha_j$ or $\beta_j$ corresponds to the $i+1$-th row then it is equal to $l_{i+1,j}+i$, else it is equal to $l_{i-1,j}+i-1$. Thus after the reordering we get
    \begin{align*}
        \prod_{m=1}^{k}(\alpha_m-\alpha_j+r_j)(\beta_m-\alpha_j+r_j-1) = \prod_{k=1}^{i+1}(l_{i+1,k}-l_{ij}) \cdot \prod_{k=1}^{i-1}(l_{i-1,k}-l_{ij}-1)
    \end{align*} and the last formula for $E_{i,i+1}$ is established.

    In fact, we get the formulas for each index $i$ separately because we have chosen $v_{\Lambda}$ to be equal to the basis vectors $v_r$ in $\yang{2}^{(i)}$. But it is mild since for different indices $i$ and $i^\prime$ the Gelfand-Tsetlin basis vectors of $\mathcal{A}^{(i)}(\Lambda)$ are each in different $\mathcal{A}^{(i^\prime)}(\Lambda)$. Hence after the choice of basis vectors for $i$ they can be multiplied by fixed scalar for each $\mathcal{A}^{(i)}(\Lambda)$ to be the basis vector for $\mathcal{A}^{(i^\prime)}(\Lambda)$, and all of the formulas hold.

    Finally, the elements $E_{ii}$, $E_{i+1,i}$ and $E_{i,i+1}$ are standard generators of $\gl{n}$, and the found formulas uniquely determine the corresponding irreducible representation.
\end{proof}

\begin{remark}
    Our formulas is slightly different from the similar formulas of \cite[Th. 5.4.1]{Molev}, indeed, there one can find
    \begin{gather*}
        E_{i+1,i}\cdot v_{\Lambda} = \sum_{j=1}^{i}\frac{\prod_{k=1}^{i-1}(l_{i-1,k}-l_{ij})}{(l_{i1}-l_{ij})\cdot\ldots\land^{j}\ldots\cdot(l_{ii}-l_{ij})}v_{\Lambda-\delta_{ij}},\\
        E_{i,i+1}\cdot v_{\Lambda} = -\sum_{j=1}^{i} \frac{\prod_{k=1}^{i+1}(l_{i+1,k}-l_{ij})}{(l_{i1}-l_{ij})\cdot\ldots\land^{j}\ldots\cdot(l_{ii}-l_{ij})}v_{\Lambda+\delta_{ij}}.
    \end{gather*} We should observe that our formulas and above are equivalent, that is, the linear automorphism of $V_{\Lambda}$ exists which transform our formulas to those.
\end{remark}

\begin{proof}
    Of course, the linear automorphism pretends to be diagonal:
    \begin{align*}
        v_{\Lambda}\to C_{\Lambda}v_{\Lambda}.
    \end{align*} So, let us arise the automorphism by the next rule:
    \begin{itemize}
        \item if $\Lambda$ is the tableau of highest vector, then $C_{\Lambda}=1$,
        \item for all allowable tableaux the formula $C_{\Lambda} = \prod_{k=1}^{i-1}(l_{i-1,k}-l_{ij})C_{\Lambda-\delta_{ij}}$ holds.
    \end{itemize} It is easy to check that the product
    \begin{align*}
        \prod_{k=1}^{i-1}(l_{i-1,k}-l_{ij})
    \end{align*} is equal to zero only in a case $\lambda_{i-1,j}=\lambda_{ij}$. But in that case the tableau $C_{\Lambda-\delta_{ij}}$ is not allowable.

    Since every allowable Gelfand-Tsetlin tableau can be gotten from the tableau of the highest vector by subtracting $\delta_{ij}$ several times we define a linear automorphism. By definition, it is satisfying
    \begin{align*}
        \frac{C_{\Lambda}}{C_{\Lambda-\delta_{ij}}} = \prod_{k=1}^{i-1}(l_{i-1,k}-l_{ij}) \iff \frac{C_{\Lambda+\delta_{ij}}}{C_{\Lambda}} = \prod_{k=1}^{i-1}(l_{i-1,k}-(l_{ij}+1)) ,
    \end{align*} now one can easily check that under this linear automorphism our formulas go to the formulas of \cite{Molev}.
\end{proof}